\title{Ampleness in the free group
}
\author{A. Ould Houcine\thanks{ The author was supported by SFB 878. }\ \  and K. Tent}
\date{\today}
\renewcommand{\>}{\rangle}
\newcommand{\<}{\langle}
\newcommand{\tfh}{torsion-free hyperbolic }
\newtheorem{theorem}{Theorem}[section]
\newtheorem{lemma}[theorem]{Lemma}
\newtheorem{proposition}[theorem]{Proposition}
\newtheorem{corollary}[theorem]{Corollary}
\newtheorem{definition}[theorem]{Definition}
\newtheorem{remark}[theorem]{Remark}
\newcommand{\nc}{\newcommand}
\nc{\inv}{^{-1}}
\nc{\Q}{\mathbb{Q}}
\nc{\R}{\mathbb{R}}
\nc{\C}{\mathcal{C}}
\nc{\G}{\Gamma}
\nc{\M}{\mathcal{M}}
\nc{\U}{\mathbb{U}}
\nc{\E}{\mathcal E}
\renewcommand{\phi}{\varphi}
\renewcommand{\b}{\beta}
\renewcommand{\a}{\alpha}
\DeclareMathOperator{\acl}{acl}
\DeclareMathOperator{\acleq}{acl^{eq}}
\newcommand{\Ind}{
 \setbox0=\hbox{$x$}\kern\wd0\hbox to 0pt{\hss$
 \mid$\hss}\lower.9\ht0\hbox to 0pt{\hss$\smile$\hss}\kern\wd0
}
\newcommand{\indep}[3]{#1\mathop{\mathpalette\Ind{}}_{#2}#3
}
\def\NotInd#1#2{#1\setbox0=\hbox{$#1x$}\kern\wd0\hbox to 0pt{\mathchardef
\nn="3236\hss$#1\nn$\kern1.4\wd0\hss}\hbox to
0pt{\hss$#1\mid$\hss}\lower.9\ht0 \hbox to
0pt{\hss$#1\smile$\hss}\kern\wd0}
\begin{document}
\maketitle

\begin{abstract} We show that the theory of the free group 
-- and more generally the theory of any torsion-free hyperbolic group -- is $n$-ample for any $n\geq 1$. We give also  an explicit description of the imaginary algebraic closure in free groups. 
\end{abstract}

\section{Introduction}

Work of Kharlampovich-Myasnikov  \cite{khar-mia} and Sela \cite{sela-el}  showed that the theory of a nonabelian free group does not depend on the rank of the free group and so we can let $T_{fg}$
denote the theory of nonabelian free groups.
Sela~\cite{sel-ima} showed that this theory   is stable as well   as  the theory of any torsion-free hyperbolic group.

Having a quantifier elimination result down to $\forall\exists$-formulas \cite{khar-mia, sela-el}, 
elimination of imaginaries down to some very restricted class of imaginaries \cite{sel-ima}, homogeneity \cite{hom-ould, perin-homo}, a better understanding of stability theoretic independence \cite{pillay-free,pillay-weight} and more recently a description of the algebraic closure \cite{OHV} now gives us the tools for studying the model theoretic geometry of forking in
the free group and in torsion-free hyperbolic groups. 

Ampleness is a property that reflects the existence of geometric configurations behaving very much like projective space over a field. Pillay \cite{pillay-amp} first defined the notion of $n$-ampleness. We use here the slightly stronger definition given by Evans in \cite{david-ample}. 

\begin{definition} \emph{\cite{david-ample}} \label{def-ample} Suppose $T$ is a complete stable theory  and $n \geq 1$ 
 is a natural number. Then $T$ is $n$-ample if (in some model of $T$, possibly after naming some 
parameters) there exist tuples $a_0 , \dots, a_n$  such that: 

\smallskip

\noindent $(i)$  ${a_n}  \NotInd{}{} {a_0}$;

\noindent $(ii)$  $a_0 \ldots a_{i-1} \Ind_{a_i} a_{i+1}\ldots a_n$ for $1 \leq i <n$; 

\noindent $(iii)$ $\acleq(a_0 ) \cap \acleq(a_1) = \acleq(\emptyset)$; 

\noindent $(iv)$   $\acleq(a_0 \dots a_{i-1} a_i ) \cap \acleq(a_0 \dots  a_{i-1} a_{i+1}) = \acleq(a_0 \dots a_{i-1})$ for  $1\leq  i < n$. 

\medskip
\noindent
We call $T$ \emph{ample} if it is $n$-ample for all $n\geq 1$.
\end{definition}

In $n+1$-dimensional projective space such a tuple $a_0,\ldots a_n$ can be
chosen as a maximal \emph{flag} of subspaces and this example can guide the intuition.  It is well-known that 
a stable structure which type-interprets an infinite field is ample.  In this  paper  we study ampleness in torsion-free hyperbolic groups and we show:

\begin{theorem}\label{T-main} The theory of  any nonabelian torsion-free hyperbolic group is ample. 
\end{theorem}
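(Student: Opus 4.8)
The plan is to produce, for each $n\ge 1$, an explicit finite ``flag'' of tuples inside a nonabelian free subgroup of a model of the ambient theory and to verify conditions $(i)$--$(iv)$ of Definition~\ref{def-ample} by hand; since a free group interprets no infinite field, the usual shortcut is unavailable. A few reductions come first. Because $\aut$ acts with infinite orbits on the nontrivial elements of a nonabelian free group, $\acl(\emptyset)$ is trivial in $T_{fg}$; and, using elimination of imaginaries down to the restricted class of \cite{sel-ima}, the clauses of Definition~\ref{def-ample} involving $\acleq$ reduce to statements about the real algebraic closure $\acl$ of finite subsets of a free subgroup -- for which we have the explicit description of \cite{OHV} -- modulo a controlled amount of imaginary data. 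As the configuration only has to exist in \emph{some} model after naming parameters, for $T_{fg}$ it suffices to work in a free group of large enough rank, which is itself a model; for a general nonabelian \tfh group $G$ we argue inside $G$, which contains nonabelian free subgroups and for which stability, the forking calculus of \cite{pillay-free,pillay-weight}, the homogeneity of \cite{hom-ould,perin-homo} and the $\acl$-description of \cite{OHV} are all available.

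For the construction I would fix a basis $c,t_1,t_2,\dots$ of a free subgroup and build, working downward from $w_n=c$, elements $w_{i-1}=\rho(w_i,t_i)$, where $\rho(x,t)$ is a fixed short word twisting $x$ by the fresh letter $t$ -- a commutator, or a product of $x$ with a conjugate of itself, being the natural candidates -- chosen so that each step $i\to i-1$ behaves like a ``tower move'': on one hand $w_{i-1}$ depends definably on $w_i$ and $t_i$ and carries a retraction collapsing the new letter, on the other hand the subgroups generated at consecutive levels are in free position, or amalgamated over a malnormal cyclic subgroup, in the evident way. The tuple $a_i$ then records the information visible at level $i$ -- the element $w_i$ together with the fresh letters used above it, truncated appropriately at the two ends -- so that the configuration has the combinatorics of a flag: the retractions witness that $a_0$ is genuinely linked to $c=w_n$, while the free/amalgamated structure at level $i$ witnesses the corresponding independence.

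Granting such a configuration: $(i)$ holds because $c=w_n$ is tied to $a_0$ through the nested word $w_0$, so $\tp(a_n/a_0)$ is confined to a proper algebraic locus and forks over $\emptyset$; $(ii)$ holds because over $a_i$ the tuples $a_0\dots a_{i-1}$ and $a_{i+1}\dots a_n$ generate subgroups in free position over $\langle a_i\rangle$ -- exactly what the recursion was arranged to give -- whence $a_0\dots a_{i-1}\Ind_{a_i}a_{i+1}\dots a_n$ by the forking calculus; and $(iii)$, $(iv)$ reduce, via \cite{OHV}, to checking that the algebraic closures of the relevant sets are the expected finitely generated subgroups, with intersections no larger than $\acl(\emptyset)$, respectively $\acl(a_0\dots a_{i-1})$, after which the same bounds transport to $\acleq$ using the explicit description of the imaginary algebraic closure.

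The main difficulty, I expect, is that clause $(i)$ and the various instances of $(ii)$ pull against one another -- a genuine forking link between $a_0$ and $a_n$ versus all of the intermediate independences -- so that the word $\rho$ (equivalently, the choice of tower moves) must be engineered to meet all of them at once, for every $n$. Verifying that $\rho$ really produces the claimed free, resp.\ amalgamated-over-cyclic, decomposition at each level (a Bass--Serre / normal-form computation) and pinning down the algebraic closures \emph{exactly} -- including the passage from $\acl$ to $\acleq$, which is also where the general \tfh case is the most delicate -- is the technical heart of the argument.
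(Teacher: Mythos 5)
Your overall strategy---an explicit flag inside a free group built by a tower of ``gluing'' moves, verification of $(ii)$ via free decompositions and the Perin--Sklinos characterization of forking, and reduction of the $\acleq$-clauses to restricted imaginaries via Sela's elimination theorem---is the same shape as the paper's argument, and your inductive word $w_{i-1}=\rho(w_i,t_i)$ is recognizably the paper's $c_{i+1}=t_ic_i^{-1}[a_i,b_i]^{-1}t_i^{-1}$. But two of your steps have genuine gaps. First, the passage to a general nonabelian \tfh group $\Gamma$ cannot be done by ``arguing inside $\Gamma$, which contains nonabelian free subgroups'': a free subgroup of $\Gamma$ is not an elementary substructure, and algebraic closure, forking and imaginaries in the sense of $Th(\Gamma)$ restricted to that subgroup need not agree with their $T_{fg}$-counterparts. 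What the paper does instead is work in the model $H=EC(\Gamma)*F$ of $Th(\Gamma)$ (using Sela's theorem that $\Gamma\equiv\Gamma*F$) and then prove nontrivial transfer statements: that $\indep{\bar a}{\bar c}{\bar b}$ in $H$ is equivalent to the same in $F$ (Theorem~\ref{t-indep}, which itself needs stationarity and the independence of $EC(\Gamma)$ from $F$), and that the $\acleq$-intersection conditions computed in $F$ pass to $H$ (Lemma~\ref{lemma-freefactor}, Corollary~\ref{cor-reduction}). Without some such device your reduction to the free group does not get off the ground for $\Gamma$ not elementarily equivalent to a free group.

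Second, you leave unspecified exactly the point where the construction is forced: the control of conjugacy-class imaginaries in clauses $(iii)$--$(iv)$. After Sela's elimination relative to $\mathcal E$, the cosets-of-centralizers sorts reduce to the real $\acl$ (Proposition~\ref{prop-imag-acl}), but the conjugacy sort $E_0$ does not: one must show $\acl^c(\bar a)\cap\acl^c(\bar b)=\acl^c(\bar c)$ for the relevant tuples, and the criterion (Proposition~\ref{prop1}) is that $g^F\in\acl^c(A)$ iff $g$ is elliptic in every malnormal cyclic splitting relative to $A$, equivalently conjugate into a rigid vertex group or a boundary subgroup of a surface piece of the JSJ. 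This is why the paper's building blocks are surface groups $H_i=\langle c_i,d_i,a_i,b_i\mid c_id_i[a_i,b_i]=1\rangle$ glued along boundary subgroups via the $t_i$: gluing surfaces along boundaries yields surfaces (Lemma~\ref{lemma-surface}), and a surface group with $2g+b\geq 4$ admits, for each element not conjugate into the boundary, a further malnormal cyclic splitting making that element hyperbolic (Lemma~\ref{split-surface}); this is the mechanism that pins $\acl^c$ of each initial segment down to exactly the expected free factor. A generic ``tower move'' by a commutator or a conjugate-product does not obviously provide this refinability, and clause $(iv)$ for the conjugacy sort is precisely where an unlucky choice of $\rho$ would fail. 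Relatedly, your justification of $(i)$ (``confined to a proper algebraic locus'') is not a forking criterion; the paper instead rules out any free decomposition $G_n=K*L$ separating $\overline h_0$ from $\overline h_{2n}$ by computing in the abelianization that $c_0=d_n^{\pm1}$, and then invokes Proposition~\ref{p-forkingindep}.
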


We also state the following special case, whose proof --- as we will see --- implies the
general statement:
\begin{corollary}
 The theory $T_{fg}$ of nonabelian free  groups is ample. 
\end{corollary}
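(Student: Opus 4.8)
The plan is to construct, for each $n\geq 1$, an explicit configuration $a_0,\ldots,a_n$ inside a nonabelian free group $F$ (or, more precisely, inside a model of $T_{fg}$, which by the work cited suffices for the hyperbolic case as well) witnessing $n$-ampleness. The natural candidates come from iterated amalgams/HNN-type constructions: one wants a chain of subgroups where each $a_i$ "separates" the part below it from the part above it in the sense of stationary independence, and where the algebraic-closure conditions $(iii)$--$(iv)$ hold. The guiding model is the flag of subspaces in projective space mentioned in the introduction, so I would look for tuples generating a nested sequence of free factors or, more likely, free subgroups in "general position" that are glued along commuting elements. A clean way to produce the non-independence in $(i)$ together with the independence in $(ii)$ is to take a base element $u$ and set things up so that $a_i$ and $a_{i+1}$ share a common commuting element (e.g. a power of a primitive element), forcing $a_n$ to be linked back to $a_0$ through the whole chain, while any single $a_i$ in the middle screens off the two sides.

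The key steps, in order: (1) write down the concrete tuples $a_0,\ldots,a_n$ — I expect these to be built from a fixed basis $x_1,\ldots,x_m$ of $F$ using words that encode a "staircase" pattern, possibly living naturally in a free group of rank growing with $n$, which is harmless since $T_{fg}$ is rank-independent; (2) verify the independence statements $(ii)$ using the description of forking independence in free groups from \cite{pillay-free,pillay-weight} — concretely, showing that $\tp(a_0\ldots a_{i-1}/a_i)$ does not fork over the relevant base, which should reduce to a statement about how the generating words interact, checkable via the structure of the ambient free (or hyperbolic) group and JSJ-type decompositions; (3) verify $(i)$, i.e. that $a_n$ genuinely forks with $a_0$ — here one exhibits the common "link" explicitly and appeals again to the forking calculus; (4) verify the $\acl^{eq}$ conditions $(iii)$ and $(iv)$, for which I would use the explicit description of the imaginary algebraic closure promised in the abstract (and presumably proved in this paper or in \cite{OHV, sel-ima}), reducing each intersection to a computation about which elements are algebraic over the given parameters.

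The main obstacle I anticipate is step (4): controlling $\acl^{eq}$ precisely enough to show the intersections are no bigger than claimed. It is easy to build configurations satisfying the independence clauses $(i)$--$(ii)$ by hand, but the algebraic-closure clauses are what make $n$-ampleness hard to fake — one must ensure that gluing the pieces along commuting elements does not inadvertently create new algebraic (or even definable) imaginary elements in the intersection. This is exactly where the recent description of $\acl$ in \cite{OHV} and the restricted elimination of imaginaries from \cite{sel-ima} are essential, and I would expect a substantial portion of the argument to be devoted to pinning down $\acl^{eq}$ of the relevant parameter sets. A secondary difficulty is making the construction uniform in $n$ and checking that passing from free groups to arbitrary torsion-free hyperbolic groups does not break the algebraic-closure computations; the corollary should then be immediate since the free-group configuration is the special case from which, as the authors note, the general theorem follows.
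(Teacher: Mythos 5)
There is a genuine gap: your text is a plan rather than a proof --- no tuples are written down and none of the four conditions is actually verified --- and the one concrete structural suggestion you make points in the wrong direction. You propose to link $a_i$ to $a_{i+1}$ by having them ``share a common commuting element.'' If the tuples literally shared a commuting element $u$, then $C(u)$ (equivalently the coset imaginaries attached to it) would lie in $\acleq(a_i)\cap\acleq(a_{i+1})$, and condition $(iii)$, which demands $\acleq(a_0)\cap\acleq(a_1)=\acleq(\emptyset)$, would fail. The whole difficulty of ampleness is to create dependence between $a_0$ and $a_n$ \emph{without} leaving any trace of it in the pairwise imaginary algebraic closures, and shared centralizers are exactly the kind of trace one must avoid.

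The paper's mechanism is different: one takes $H_i=\<c_i,d_i,a_i,b_i\mid c_id_i[a_i,b_i]=1\>$, a genus-one surface group with two boundary curves $c_i,d_i$ (which is free of rank $3$), and glues $d_i$ to $c_{i+1}$ by a stable letter $t_i$, taking $a_i=\overline h_{2i}=(a_{2i},b_{2i},c_{2i})$. Each $H_i$ remains a free factor of the ambient free group $G_n$, which gives $(ii)$ via the Perin--Sklinos characterization of forking over free factors ($\bar a\Ind_{\bar c}\bar b$ iff $F=A*C*B$ with $\bar a\in A*C$, $\bar b\in C*B$) --- not by a direct non-forking computation on words. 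The failure of independence in $(i)$ comes from a homological obstruction: the surface relations force $c_0=d_n^{\pm1}$ in the abelianization, so no free decomposition separates $\overline h_0$ from $\overline h_{2n}$. Finally, the $\acleq$ conditions $(iii)$--$(iv)$ are controlled because consecutive blocks glue into larger surface groups, and any element not conjugate into a boundary subgroup can be made hyperbolic in a malnormal cyclic splitting, hence is moved to infinitely many conjugacy classes by automorphisms and cannot lie in $\acl^c$; this, together with Sela's reduction of imaginaries to conjugacy classes and cyclic cosets, pins down the intersections exactly. You correctly identified step (4) as the crux and the right references, but without the surface-gluing idea the construction itself is missing, and the commuting-element variant would not work.
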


In fact, our construction is very explicit: given a basis
\[\{c_0,a_i,b_i,t_i\colon i<\omega\}\] of the free group of rank $\omega$
we find witnesses for ampleness as the tuples
$h_{2i}=(a_{2i},b_{2i},c_{2i}), i<\omega$, where the sequence $(c_i)_{i<\omega} $ is defined inductively as follows \[c_{i+1}=t_i c_i^{-1}[a_i,b_i]^{-1}{t_i}^{-1}.\]

We do not know whether the free group interprets an infinite field. While
we do not believe this to be the case, ampleness is certainly consistent with
the existence of a field.

In \cite{pillay-free} Pillay gave a proof showing that the free group is $2$-ample. However, his
proof relied on a result by Bestvina and Feighn which has not yet been
completely established. His  conjecture that the free group is \emph{not}  $3$-ample is refuted in this paper.

The present paper is organized as follows.  In Section 2 we collect the preliminaries about elimination of imaginaries,  JSJ-decompositions and related notions  needed in the sequel. In Section 3, we study the imaginary algebraic closure and give a geometric characterization of those conjugacy classes which are elements of the imaginary algebraic closure. 
Section 4 is   devoted to the construction of sequences witnessing the ampleness in the free group. The last section then shows how the general theorem
follows from the special case. 

\section{Preliminaries}

In this section we put together some  background material needed in the sequel. The first subsection deals with imaginaries and the two next subsections deal with  splittings,  JSJ-decompositions, homogeneity,  algebraic closure and independence.

\subsection{Imaginaries}

Let $T$ be a complete theory and $M$ a (very saturated) model of $T$.
Recall that $T$ has \emph{geometric elimination of imaginaries}
if for any $\emptyset$-definable equivalence relation $E$ on $M^k$ and any 
equivalence class $a_E\in M^{eq}$ there is a finite tuple $\bar a\subset M$
with $\acleq(\bar a)=\acleq(a_E)$, i.e., $a_E\in \acleq(\bar a)$ and $\bar a\in \acleq(a_E)$.

 For a subset $\mathcal E$ 
of the set of $\emptyset$-definable equivalence relations in $T$, we let $M_{\mathcal E}$ denote the restriction of $M^{eq}$ to the sorts in $\mathcal E$. That is, for every $E \in \mathcal E$ defined on $M^k$, we add a new sort $S_E$ to the language interpreted as $M^k/E$ and a new function $\pi_E : M^k \rightarrow S_E$ which associates to a $k$-tuple $x\in M^k$ its equivalence class $x_E$.  Then
the $L_{\mathcal E}$-structure $M_{\mathcal E}$ is the disjoint union of $M/E$ for $E\in\mathcal E$. 
We say that $T$ has geometric elimination of imaginaries \emph{relative
to $\E$} if $M_\E$ has geometric elimination of imaginaries.

\begin{remark} \label{lem-link} Suppose that $T$ has geometric elimination of imaginaries relative to $\mathcal E$. Then for tuples $\bar a, \bar b,\bar c\subset M$ we have

\[
\acleq(\bar a)\cap\acleq(\bar b)= \acleq(\bar c)\]
 if and only if 
\[ (\acleq(\bar a) \cap M_{\mathcal E})\cap (\acleq(\bar b) \cap M_{\mathcal E})= (\acleq(\bar c) \cap M_{\mathcal E}).
\]
\end{remark}

In order to prove ampleness for torsion-free hyperbolic groups we can therefore restrict our attention to some basic equivalence relations. For the theory of a nonabelian torsion-free hyperbolic group
Sela established geometric elimination of imaginaries relative to the following small collection of basic equivalence relations:
let $\mathcal E$ denote the collection of the following $\emptyset$-definable equivalence relations where $C(x)$ denotes the centralizer of $x$ and $m,p,q$ are positive natural numbers: 
$$E_0(x;y) : \exists z x^z=y$$
$$E_{1,m}(x,y; x',y'):  C(x)=C(x') \wedge \exists t \in C(x) \hbox{ such that } y' =yt^m$$
$$E_{2,m}(x,y; x',y'): C(x)=C(x') \wedge \exists t \in C(x) \hbox{ such that }y'=t^my $$
$$E_{3, p, q}(x,y,z; x',y',z'): C(x)=C(x') \wedge C(y')=C(y) \wedge \exists s \in C(x) \wedge \exists t \in C(y) \hbox{ such that }z=s^pz't^q.$$

For an $L$-structure $M$ we denote by $\mathcal P_n(M^m)$ the set of finite subsets of $M^m$ of cardinality at most $n$.  A function $f : M^r \rightarrow \mathcal P_n(M^m)$ is said to be definable if there exists a formula $\varphi(\bar x; \bar y)$ such that for any $\bar a \in M^r$, for any $\bar b \in M^m$, $\bar b \in f(\bar a)$ if and only if $M \models \varphi(\bar a, \bar b)$.

Sela proves the following strong form of geometric elimination of imaginaries:
\begin{theorem}\emph{\cite[Theorem 4.6]{sel-ima}}\label{sela-ima} For any $\emptyset$-definable equivalence relation $E(\bar x, \bar y)$, $| \bar x| =n$ in the theory $T$ of a torsion-free hyperbolic group $M$ (in the language of groups),  there exist $k , p\in \mathbb N$ and a function $f : M^n \rightarrow P_p({M_{\mathcal E}}^k)$  $\emptyset$-definable in $L_\E$ such that for all $\bar a, \bar b \in M^n$

$$
M \models E(\bar a, \bar b) \Leftrightarrow M_{\mathcal E} \models f(\bar a)=f(\bar b). 
$$ 
\end{theorem}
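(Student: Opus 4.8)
This is the elimination-of-imaginaries theorem of \cite{sel-ima}, and the plan is to follow the strategy there. Fix a $\emptyset$-definable equivalence relation $E(\bar x,\bar y)$ with $|\bar x|=n$. For a parameter $\bar a\in M^n$ the class $\bar a_E=\{\bar b\in M^n : M\models E(\bar a,\bar b)\}$ is a definable set, and the family $\{\bar a_E:\bar a\in M^n\}$ is uniformly definable; the aim is to attach to $\bar a$, by a function $\emptyset$-definable in $L_{\mathcal E}$, a finite set $f(\bar a)\in\mathcal P_p(M_{\mathcal E}^{k})$ that serves as a \emph{canonical code} for the class, i.e. so that $f(\bar a)=f(\bar b)$ precisely when $\bar a_E=\bar b_E$.

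First I would normalise the situation using the solution of the Tarski problem: by quantifier elimination down to $\forall\exists$-formulas \cite{khar-mia,sela-el} one may take $E$ to be a Boolean combination of such formulas, and then analyse the positive/Diophantine content with the Makanin--Razborov machinery. Concretely, the set of $\bar a$ for which $\bar a_E$ meets the solution set of a given system of equations and inequations factors through finitely many limit quotients; over each such stratum one has a finite collection of limit groups together with their abelian JSJ decompositions, and these control the solution sets. Second, over each stratum I would isolate the \emph{modular group} of automorphisms of the relevant limit group preserving the class $\bar a_E$; the shortening argument shows this group acts on a bounded set of shortest representatives, so that the class is determined, up to that action, by finitely much data. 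Third --- the decisive step --- one checks that the invariants of this data that actually persist are only: conjugacy of group elements, recorded by $E_0$; and the way elements sit in cosets and double cosets of cyclic subgroups, recorded by $E_{1,m}$, $E_{2,m}$ and $E_{3,p,q}$. The point is that in a torsion-free hyperbolic group centralisers are cyclic and behave well with respect to roots and normalisers, so the ambiguity introduced by Dehn twists along the cyclic edge groups of the JSJ is exactly coset/double-coset data of the kind measured by the relations in $\mathcal E$. Assembling these codes uniformly across the finitely many strata yields an $\emptyset$-definable $f$ with the required property, with bounds $k,p$ independent of $\bar a$.

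The main obstacle is the combination of the third step with the uniformity requirement: proving that the canonical code genuinely lands in the small collection $M_{\mathcal E}$ of sorts, rather than in full $M^{eq}$, and doing so by a single function $\emptyset$-definable in $L_{\mathcal E}$ rather than merely piecewise. The conjugacy sort $E_0$ absorbs the orbit ambiguity from inner automorphisms, while the three families $E_{1,m}$, $E_{2,m}$, $E_{3,p,q}$ absorb the ambiguity from Dehn twists along cyclic edge groups; verifying that no further sort is needed, and that the resulting bookkeeping can be made uniform in $\bar a$, is the heart of \cite[Theorem~4.6]{sel-ima} and would be by far the most delicate part of the argument.
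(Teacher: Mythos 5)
This statement is not proved in the paper at all: it is quoted verbatim as \cite[Theorem 4.6]{sel-ima} and used as a black box, so there is no internal argument to compare your proposal against. The authors' entire ``proof'' is the citation.

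Your proposal, read as a proof, has a genuine gap, and you have in fact located it yourself. The first two steps (reduction via the $\forall\exists$ quantifier elimination, stratification by Makanin--Razborov limit quotients and their JSJ decompositions, control of representatives by the modular group via the shortening argument) are a reasonable high-level description of Sela's strategy. But the third step --- showing that the residual ambiguity after quotienting by the modular group is captured \emph{exactly} by conjugacy ($E_0$) and by the coset and double-coset relations $E_{1,m}$, $E_{2,m}$, $E_{3,p,q}$, and that the resulting code can be packaged as a single function $f:M^n\to \mathcal P_p(M_{\mathcal E}^k)$ that is $\emptyset$-definable in $L_{\mathcal E}$ with $k,p$ uniform in $\bar a$ --- is asserted rather than carried out. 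That step is not bookkeeping: it requires Sela's analysis of envelopes and the uniform bounds coming from the finiteness of the Diophantine-geometric data, and it is precisely where the theorem lives. Your closing paragraph concedes that this ``would be by far the most delicate part of the argument,'' which is an honest assessment but means the proposal is an outline of where a proof would have to go, not a proof. For the purposes of this paper that is acceptable only in the sense that the authors themselves do not reprove the result; if you intend to supply a proof rather than a citation, the third step must actually be executed.
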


Clearly, this implies that for any $0$-definable equivalence relation $E$ in $T$ with corresponding definable function $f_E$ in $L_\E$, the equivalence class $\bar a_E$ is interalgebraic in $M_\E$ with the finite set $f_E(\bar a)$ of elements in $M_\E$: namely, if $f_E(\bar a)=(\bar c_1,\ldots \bar c_k)$ where the $\bar c_i$ are tuples from $M_\E$,
then  $\bar c_1,\ldots,\bar c_k\in \acleq(\bar a_E)$ and
$\bar a_E\in\acleq(\bar c_1,\ldots,\bar c_k)$.
Hence:

\begin{corollary} \emph{\cite{sel-ima}} \label{cor-sela-ima} The theory of a torsion-free hyperbolic group has geometric elimination of imaginaries relative to $\mathcal E$. \qed
\end{corollary}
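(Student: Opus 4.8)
The plan is to read the corollary off from Theorem~\ref{sela-ima}, so that the argument is really just an unwinding of definitions. Recall that $T$ has geometric elimination of imaginaries relative to $\mathcal E$ precisely when $M_{\mathcal E}$ has geometric elimination of imaginaries, i.e.\ when every element of $(M_{\mathcal E})^{eq}$ is interalgebraic over $\emptyset$ with some finite tuple of elements of $M_{\mathcal E}$. Since each sort $S_E$ of $M_{\mathcal E}$ is an $\emptyset$-definable quotient of a finite power of the home sort $M$, the imaginary sorts added to $M_{\mathcal E}$ are already available in $M^{eq}$, and $(M_{\mathcal E})^{eq}$ may be canonically identified with $M^{eq}$, with the same $\acleq$. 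In particular, after this identification every element of $(M_{\mathcal E})^{eq}$ is of the form $\bar a_E$ for some $\emptyset$-definable equivalence relation $E$ on some $M^n$ and some $\bar a\in M^n$. So it suffices to produce, for each such $E$ and each $\bar a\in M^n$, a finite tuple $\bar d$ from $M_{\mathcal E}$ with $\acleq(\bar a_E)=\acleq(\bar d)$.

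Fix $E$ and $\bar a$ and apply Theorem~\ref{sela-ima}: it provides $k,p\in\mathbb N$ and a function $f_E:M^n\to\mathcal P_p((M_{\mathcal E})^k)$, $\emptyset$-definable in $L_{\mathcal E}$, with $M\models E(\bar a,\bar b)\Leftrightarrow M_{\mathcal E}\models f_E(\bar a)=f_E(\bar b)$. The right-hand equivalence says exactly that $f_E$ is constant on $E$-classes and that the induced map on $E$-classes is injective; hence $\bar a_E$ and the finite set $f_E(\bar a)\subseteq (M_{\mathcal E})^k$ are interdefinable over $\emptyset$ in $M_{\mathcal E}$, and in particular interalgebraic. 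Writing $f_E(\bar a)=\{\bar c_1,\dots,\bar c_m\}$ with $m\le p$ and each $\bar c_i\in (M_{\mathcal E})^k$, set $\bar d=(\bar c_1,\dots,\bar c_m)$ for a fixed enumeration. Then $\bar d$ and the set $f_E(\bar a)$ are interalgebraic over $\emptyset$: the tuple determines the set, while the set admits only finitely many enumerations, so each $\bar c_i\in\acleq(f_E(\bar a))$. Combining, $\acleq(\bar a_E)=\acleq(\bar d)$ with $\bar d$ a finite tuple from $M_{\mathcal E}$, exactly as asserted in the paragraph preceding the corollary.

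I expect no real obstacle here. The only points that require a little care are purely bookkeeping: the identification of $(M_{\mathcal E})^{eq}$ with $M^{eq}$ and the consequent reduction to equivalence relations on powers of the home sort (these already code every imaginary), together with the trivial passage from a finite set of tuples to a finite tuple. The entire mathematical content is contained in Theorem~\ref{sela-ima}, which is why the corollary needs no separate argument.
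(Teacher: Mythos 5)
Your proposal is correct and follows exactly the paper's own route: the paper derives the corollary from Theorem~\ref{sela-ima} via the observation in the preceding paragraph that $\bar a_E$ is interalgebraic in $M_{\mathcal E}$ with the finite set $f_E(\bar a)$, which is precisely your argument. The extra bookkeeping you supply (identifying $(M_{\mathcal E})^{eq}$ with $M^{eq}$ and passing from the finite set to a tuple) is just a careful unwinding of what the paper leaves implicit.
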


\subsection{Splittings, JSJ-decompositions}

Let $G$ be a group and let $\mathcal C$ be a class of subgroups of $G$. By a \textit{$(\mathcal C, H)$-splitting} of $G$ (or a splitting of $G$ over $\mathcal C$ relative to $H$), we understand a tuple $\Lambda=(\mathcal G(V,E), T, \varphi)$, where $\mathcal G(V,E)$ is a graph of groups such that each edge group is in $\mathcal C$ and  $H$ is elliptic, $T$ is a maximal subtree of $\mathcal G(V,E)$ and $\varphi : G \rightarrow \pi(\mathcal G(V,E), T)$ is an isomorphism; here $\pi(\mathcal G(V,E), T)$ denotes the fundamental group of $\mathcal G(V, E)$ relative to $T$. If $\mathcal C$ is the class of abelian groups or cyclic groups, we will just say \textit{abelian splitting} or \textit{cyclic splitting}, respectively.   If every edge group is malnormal in the adjacent vertex groups, then we say that the splitting is  \textit{malnormal}. 

Given a group  $G$  and a subgroup $H$  of $G$,  $G$ is said to be \textit{freely $H$-decomposable} if $G$ has a nontrivial free decomposition $G=G_1*G_2$ such that $H \leq G_1$.  Otherwise, $G$ is said to be freely \textit{$H$-indecomposable}.

Following \cite{jsj-gl}, given a group $G$ and two $(\C, H)$-splittings $\Lambda_1$ and $\Lambda_2$ of  $G$,  $\Lambda_1$ \textit{dominates} $\Lambda_2$ if every subgroup of $G$ which is elliptic in $\Lambda_1$ is also elliptic in $\Lambda_2$.  A $(\C, H)$-splitting of $G$ is said to be \textit{universally elliptic} if all edge stabilizers in $\Lambda$ are elliptic in any other $(\C, H)$-splitting of $G$.

A \textit{JSJ-decomposition of $G$ over $\C$ relative to  $H$} is an universally elliptic  $(\C, H)$-splitting   dominating all other universally elliptic $(\C,H)$-splittings.  If $\C$ is the class of abelian subgroups, then we  simply say \textit{abelian JSJ-decomposition}; similarly when $\C$ is the class of cyclic subgroups. It follows from \cite{sela-JSJ, jsj-gl} that torsion-free hyperbolic groups (so in particular nonabelian free groups of finite rank) admit (relative) cyclic JSJ-decompositions. 

Given an abelian  splitting $\Lambda$  of $G$ (relative to $H$) and a vertex group $G_v$ of $\Lambda$, the \textit{elliptic abelian neighborhood}  of $G_v$ is the subgroup generated by the elliptic elements that commute with nontrivial elements of $G_v$.  It was shown in \cite[Proposition 4.26]{gui-limit} that if $G$ is commutative transitive then any  abelian splitting $\Lambda$  of $G$ (relative to $H$) can be transformed to an abelian splitting $\Lambda'$ of $G$ such that the underlying graph is the same as that of $\Lambda$ and for any vertex $v$, the corresponding new vertex group $\hat G_v$ in $\Lambda'$ is the elliptic abelian neighborhood of $G_v$ (similarly for edges); in particular any edge group of $\Lambda'$ is malnormal in the adjacent vertex groups. We call that transformation the \textit{malnormalization} of $\Lambda$.  If $\Lambda$ is a (cyclic or abelian) JSJ-decomposition of $G$ and $G$ is commutative transitive then the malnormalization of $\Lambda$ will be called a \textit{malnormal} JSJ-decomposition. If $G_v$ is a rigid vertex group then we call $\hat G_v$ also rigid; similarly for abelian and surface type vertex groups. Strictly speaking a malnormal JSJ-decomposition  is not a JSJ-decomposition in the sense  of \cite{jsj-gl}, however it possesses the most important properties of JSJ-decompositions that we need. 

We end with the definition of   \textit{generalized malnormal  JSJ-decomposition} relative to a subgroup $A$. First, split  $G$ as a free product $G=G_1*G_2$, where $A \leq G_1$ and $G_1$ is freely $A$-indecomposable. Then, define a  generalized malnormal (cyclic) JSJ-decomposition of $G$ relative to $A$ as the (cyclic) splitting obtained by adding $G_2$ as a new vertex group to a malnormal (cyclic) JSJ-decomposition of $G_1$ (relative to $A$). We call $G_2$ the \textit{free factor}.  In a similar way the notion of a \textit{generalized cyclic JSJ-decomposition}, without the assumption of malnormality,  is defined

We denote by $Aut_H(G)$ the group of automorphisms of a group $G$ that fix 
a subgroup $H$ pointwise. The \textit{abelian modular group} of $G$ relative to $H$, denoted $Mod_H(G)$, is the subgroup of $Aut_H(G)$ generated by Dehn twists, modular automorphisms of abelian type and modular automorphisms of surface type (For more details we refer the reader for instance to \cite{OHV}).  We will use the following property of modular automorphisms whose proof is essentially contained in \cite{gui-limit} (see for instance \cite[Proposition 4.18]{gui-limit}). 

\begin{lemma}\label{lem-modular} Let $\Gamma$ be torsion-free hyperbolic group and $A$ a nonabelian  subgroup of $\Gamma$.  Let $\Lambda$ be a generalized malnormal cyclic JSJ-decomposition of $\Gamma$ relative to $A$. Then any modular automorphisms $\sigma \in Mod_A(\Gamma)$ retrict to a conjugation on rigid vertex groups  and on boundary subgroups of surface type vertex groups of $\Lambda$. \qed
\end{lemma}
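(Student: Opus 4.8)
The plan is to reduce the assertion to the defining generators of $Mod_A(\Gamma)$ together with one elementary closure property; the substantive input is then precisely \cite[Proposition 4.18]{gui-limit}.

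First I would record the following purely group-theoretic observation, with $H\leq\Gamma$ fixed. If $\sigma\in\aut(\Gamma)$ restricts to a conjugation on $H$, say $\sigma|_H=\mathrm{conj}_c|_H$, then for every $g\in\Gamma$ and $h\in H$ one has $\sigma(ghg^{-1})=(\sigma(g)cg^{-1})(ghg^{-1})(\sigma(g)cg^{-1})^{-1}$, so $\sigma$ restricts to a conjugation on $gHg^{-1}$ as well; in particular this property for $H$ is equivalent to the same property for every $\Gamma$-conjugate of $H$, and it passes to subgroups of $H$. Since a product of conjugations is a conjugation, and since applying one such automorphism carries $H$ to a conjugate of $H$ on which the next one again acts by conjugation, the set of $\sigma\in\aut(\Gamma)$ which restrict to a conjugation on $H$ is a subgroup of $\aut(\Gamma)$.

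Next I would use the definition of $Mod_A(\Gamma)$ as the group generated by Dehn twists along edges of $\Lambda$, modular automorphisms of abelian type, and modular automorphisms of surface type, and verify that each such generator restricts to a conjugation on every rigid vertex group of $\Lambda$ and on every boundary subgroup of a surface type vertex group. A modular automorphism of abelian type is the identity on every vertex group other than the abelian vertex group supporting it, and fixes the incident edge groups pointwise; likewise a modular automorphism of surface type is the identity outside the surface vertex group $S=\pi_1(\Sigma)$ it is supported on, and on $S$ it is induced by a mapping class of $\Sigma$ fixing each boundary component, hence preserves each boundary subgroup of $S$ up to conjugacy inside $S$. For a Dehn twist along an edge $e$ with cyclic edge group $Z_e$, collapsing the remaining edges of $\Lambda$ exhibits $\Gamma$ as $P\ast_{Z_e}Q$ or as an HNN extension $P\ast_{Z_e}$, the twist acting as the identity on $P$ and by conjugation by a generator $z$ of $Z_e$ on $Q$ (resp.\ as the identity on $P$ and by $t\mapsto tz$ on the stable letter); it therefore restricts to a conjugation on $P$ and on $Q$, hence by the first step on all their conjugates, hence on every vertex group of $\Lambda$ since these are elliptic in this one-edge splitting, and on a boundary subgroup equal to $Z_e$ it acts trivially, $Z_e$ being abelian. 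The structural facts used here about the modular generators and their effect on vertex and boundary subgroups are exactly the content of \cite[Proposition 4.18]{gui-limit} and of the definitions recalled above.

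It then remains only to combine the two steps: the generators of $Mod_A(\Gamma)$ all lie in the subgroup of $\aut(\Gamma)$ consisting of automorphisms that restrict to a conjugation on a given rigid vertex group (resp.\ on a given boundary subgroup of a surface type vertex group of $\Lambda$), whence so does every element of $Mod_A(\Gamma)$. I expect the genuine obstacle to be not this reduction, which is routine, but the structural description of the modular generators themselves -- in particular the surface case, where one must settle for invariance of the boundary subgroups up to conjugacy rather than pointwise fixation; this is precisely where one leans on \cite{gui-limit}.
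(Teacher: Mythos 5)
Your proof is correct and follows exactly the route the paper intends: the paper gives no argument of its own for this lemma, deferring entirely to \cite[Proposition~4.18]{gui-limit}, and your reduction (verify the claim on the generating Dehn twists and abelian/surface-type automorphisms, then observe that the automorphisms restricting to a conjugation on a given subgroup and all its conjugates form a subgroup of $\aut(\Gamma)$) is the standard way that citation is used. The only point deserving a word more of care is that the rigid vertex groups of the \emph{malnormalized} decomposition are the elliptic abelian neighborhoods $\hat G_v$ rather than the original $G_v$, but since $\Lambda$ has the same underlying graph and the $\hat G_v$ are still elliptic in each one-edge collapse, your Dehn-twist argument applies to them verbatim.
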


We note that when $\Gamma$ is a torsion-free hyperbolic group which is freely $A$-indecomposable, then any abelian vertex group in any cyclic JSJ-decomposition $\Lambda$ of $\Gamma$ relative to $A$ is rigid; this a consequence of the fact that abelian subgroups of $\Gamma$ are cyclic and of the fact that edge groups of $\Lambda$ are universally elliptic.  The same property holds also for abelian vertex groups (different from  free factors) in  generalized malnormal JSJ-decompositions. 

Rips and Sela showed that the 
modular group has finite index in the group of automorphisms. We will use that result in the relative case. 

\begin{theorem} \emph{(See for instance \cite{OHV})} \label{mod-index} Let $\Gamma$ be a torsion-free hyperbolic group and $A$ a nonabelian  subgroup of $\Gamma$ such that $\Gamma$ is freely $A$-indecomposable.  Then $Mod_A(\Gamma)$ has finite index in $Aut_A(\Gamma)$. \qed
\end{theorem}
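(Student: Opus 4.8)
The plan is to run the shortening argument of Rips and Sela in the relative setting, so I only sketch it here, referring to \cite{OHV} and \cite{gui-limit} for details. Argue by contradiction: suppose $Mod_A(\Gamma)$ has infinite index in $Aut_A(\Gamma)$ and choose automorphisms $\sigma_n\in Aut_A(\Gamma)$, $n<\omega$, lying in pairwise distinct cosets of $Mod_A(\Gamma)$. Fix a finite generating set $S$ of $\Gamma$ (containing one of $A$), and replace each $\sigma_n$ by a representative of its coset $Mod_A(\Gamma)\sigma_n$ minimizing $\lambda_n:=\max_{s\in S}|\sigma_n(s)|$, where $|\cdot|$ denotes word length with respect to $S$. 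Since $\Gamma$ is finitely generated there are only finitely many automorphisms keeping $\max_{s\in S}|\sigma(s)|$ below any fixed bound, and elements of distinct cosets are distinct, so $\lambda_n\to\infty$.

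Next I would build a limiting real tree. Let $\Gamma$ act on (a Rips complex built from) its Cayley graph $X$ with respect to $S$, and consider the actions obtained by precomposing with $\sigma_n$ and rescaling the metric by $\lambda_n^{-1}$. By the Gromov--Paulin construction a subsequence converges, in the equivariant Gromov--Hausdorff topology, to a nontrivial minimal action of $\Gamma$ on a real tree $T$. Because each $\sigma_n$ fixes $A$ pointwise, every element of $A$ has translation length $0$ in $T$; as $A$ is nonabelian and finitely generated it must fix a point of $T$. The standard stability and acylindricity properties of limits of torsion-free hyperbolic groups guarantee that the arc stabilizers of $T$ are cyclic (or trivial). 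Feeding $T$ into the Rips machine then yields a nontrivial splitting of $\Gamma$ over cyclic subgroups in which $A$ is elliptic; since $\Gamma$ is freely $A$-indecomposable this is an essential cyclic splitting relative to $A$, hence dominated by a generalized cyclic JSJ-decomposition $\Lambda$ of $\Gamma$ relative to $A$, and the pieces of the Rips decomposition of $T$ correspond to the abelian, surface and simplicial parts of $\Lambda$.

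Finally the shortening argument closes the loop: using the action on $T$ together with $\Lambda$ one produces, for all sufficiently large $n$, a modular automorphism $\tau_n\in Mod_A(\Gamma)$ (a product of Dehn twists along edges of $\Lambda$ and of modular automorphisms of abelian and surface type, all fixing $A$ pointwise) such that $\max_{s\in S}|\tau_n\sigma_n(s)|<\lambda_n$. This contradicts the minimality in the choice of $\sigma_n$, so $Mod_A(\Gamma)$ has finite index in $Aut_A(\Gamma)$. The genuinely delicate step is this last one: one must analyse each type of piece of the Rips decomposition of $T$ separately and check that the corresponding shortening move on the generators is realized by an element of $Mod_A(\Gamma)$ --- in particular that it fixes $A$, which is precisely why the ellipticity of $A$ in $T$ and the relative JSJ were set up. For the purposes of this paper this is classical, and I would simply cite \cite{OHV} (and, through it, Rips--Sela and Guirardel--Levitt) rather than reproduce the argument.
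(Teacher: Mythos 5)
Your sketch is the standard Rips--Sela shortening argument in the relative setting, which is exactly the proof the paper invokes: Theorem~\ref{mod-index} is stated there with only a citation to \cite{OHV} and no proof of its own. The outline is correct, and the one genuinely delicate point --- that the shortening moves can be realized by elements of $Mod_A(\Gamma)$, i.e.\ fixing $A$ pointwise (which is where the ellipticity of $A$ in the limit tree and the relative JSJ-decomposition are really used) --- is correctly identified and appropriately deferred to the literature.
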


We end this subsection with the following standard lemma needed in the sequel.

\begin{lemma} \label{split-surface}Let $G$ be the fundamental group of an orientable  surface $S$ with boundaries such that $2g(S)+b(S) \geq 4$, where here $g(S)$ denotes the genus of $S$ and $b(S)$ denotes the number of boundary components. Then for any nontrivial element $g$ which is not conjugate to any element of a boundary subgroup, there exists a malnormal cyclic splitting of $G$ in which $g$ is hyperbolic and boundary subgroups  are elliptic. 
\end{lemma}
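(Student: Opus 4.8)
The plan is to take the desired splitting to be the one dual to a single, carefully chosen essential non-peripheral simple closed curve on $S$. Fix a complete hyperbolic metric on $S$ with geodesic boundary and represent the conjugacy class of $g$ by a closed geodesic $\gamma$. For an essential, non-peripheral simple closed curve $c\subset S$, let $\Lambda_c$ be the one-edge splitting of $G=\pi_1(S)$ dual to $c$: an amalgamated product $\pi_1(S_1)\ast_{\langle c\rangle}\pi_1(S_2)$ if $c$ separates $S$ into $S_1,S_2$, and an HNN extension $\pi_1(S\setminus c)\ast_{\langle c\rangle}$ otherwise. The edge group $\langle c\rangle$ is infinite cyclic, and since the homotopy class of a simple closed curve is never a proper power, $\langle c\rangle$ is a maximal cyclic subgroup of $G$; as surface groups with nonempty boundary are free, hence CSA, $\langle c\rangle$ is malnormal in $G$, a fortiori in the adjacent vertex groups, so $\Lambda_c$ is a malnormal cyclic splitting. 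Each boundary component of $S$ is disjoint from $c$ (up to isotopy), so each boundary subgroup is conjugate into a vertex group of $\Lambda_c$, i.e.\ is elliptic. Finally, $g$ is hyperbolic in $\Lambda_c$ exactly when $g$ is not conjugate into a vertex group, which happens exactly when $\gamma$ cannot be homotoped off $c$, i.e.\ when the geometric intersection number $i(\gamma,c)$ is positive. Thus the lemma reduces to the following statement: if $2g(S)+b(S)\geq 4$, then for every essential, non-peripheral closed curve $\gamma$ on $S$ there is an essential, non-peripheral simple closed curve $c$ with $i(\gamma,c)>0$.

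To prove this, I would fix a pants decomposition $\{c_1,\dots,c_k\}$ of $S$; from $2g+b\geq4$ one gets $k=3g-3+b\geq1$, so such a decomposition exists and consists of essential non-peripheral curves. Put $\gamma$ in minimal position with respect to $c_1\cup\cdots\cup c_k$. If $i(\gamma,c_j)>0$ for some $j$, we are done with $c=c_j$. Otherwise $\gamma$ can be isotoped into a single pair of pants $P$ of the decomposition, and since every essential simple closed curve in a pair of pants is boundary-parallel, $\gamma$ is isotopic to a boundary curve of $P$; this curve is not a boundary component of $S$ (else $\gamma$ would be peripheral), hence is one of the $c_j$, so $\gamma$ is itself isotopic to an essential non-peripheral simple closed curve, say $\gamma=c_1$. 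It then remains to exhibit an essential non-peripheral simple closed curve meeting $c_1$. If $c_1$ is non-separating, $S\setminus c_1$ is connected with two boundary circles coming from $c_1$, and a simple arc joining them closes up, after regluing, to a simple closed curve meeting $c_1$ in one point. If $c_1$ separates $S$ into $S_1,S_2$, then since $c_1$ is non-peripheral neither $S_i$ is a disk or an annulus, so each $S_i$ contains an essential simple arc with both endpoints on its copy of $c_1$; gluing two such arcs along matching endpoints yields a simple closed curve meeting $c_1$ in two points. In every case we obtain the required $c$.

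The one genuinely delicate point is this last step — producing a simple closed curve transverse to a prescribed non-peripheral simple closed curve — and it is precisely here that the hypothesis $2g(S)+b(S)\geq4$ (equivalently $\chi(S)\leq-2$) is used; note in particular that a pair of pants carries no essential non-peripheral simple closed curve whatsoever. The remaining ingredients are routine surface topology, the translation between ellipticity in $\Lambda_c$ and vanishing of geometric intersection number, and the CSA property of surface groups used for malnormality.
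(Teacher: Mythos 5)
Your overall strategy (take the splitting dual to a single essential non\-peripheral simple closed curve $c$, check malnormality via the CSA/maximal-cyclic property, and translate hyperbolicity of $g$ into $i(\gamma,c)>0$) is sound and genuinely different from the paper's argument, which proceeds by induction on the genus using explicit normal forms: in genus $0$ it builds the amalgam $\<s_1,\dots,s_i\>*_{s_1\cdots s_i=s_n^{-1}\cdots s_{i+1}^{-1}}\<s_{i+1},\dots,s_n\>$ chosen according to which generators occur in the normal form of $g$, and in positive genus it cuts along a handle curve and applies the induction hypothesis to the cut surface. However, your proof has a genuine gap at the key topological step. After placing the geodesic $\gamma$ disjoint from all pants curves, you conclude that $\gamma$ lies in a pair of pants $P$ and then invoke the fact that every essential simple closed curve in a pair of pants is boundary-parallel to deduce that $\gamma$ is isotopic to a boundary curve of $P$. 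But the lemma is about an arbitrary nontrivial element $g$, and its geodesic representative need not be simple. A non-simple essential closed curve in a pair of pants need not be freely homotopic into the boundary: with $\pi_1(P)=\<x,y\>$ and boundary classes $x$, $y$, $xy$, the element $xy^{-1}$ (or $[x,y]$) is not conjugate to a power of any boundary element, yet its geodesic lies entirely in $P$ and meets no pants curve. For such $g$ your case analysis terminates without producing any curve $c$ with $i(\gamma,c)>0$, so the proof is incomplete precisely in the case that the lemma is designed to handle (elements elliptic in the pants splitting but not peripheral).

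The statement you reduce to --- every essential non-peripheral closed curve on a surface with $2g+b\geq 4$ meets some essential non-peripheral simple closed curve --- is true (it is essentially \cite[Proposition 7.6]{jsj-gl}, cited elsewhere in the paper), but proving it for non-simple curves requires an additional argument, for instance an algebraic one in the spirit of the paper's base case: if $g\in\pi_1(P)=\<x,y\>$ is cyclically reduced and not a power of $x$, $y$ or $xy$ up to conjugacy, one must exhibit a simple closed curve of $S$ whose intersection with $P$ consists of essential arcs that $\gamma$ cannot avoid, and verify hyperbolicity via normal forms in the resulting amalgam or HNN extension. As written, your argument only covers elements whose conjugacy class is represented by a simple closed curve or which already cross a pants curve. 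The remaining ingredients of your proposal (malnormality of $\<c\>$ from maximality of the cyclic subgroup in a free group, ellipticity of boundary subgroups, and the equivalence of hyperbolicity with positive geometric intersection) are correct.
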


\begin{proof} The proof is by induction on $g(S)$. If $g(S)=0$, then $G=\<s_1, \cdots, s_n| s_1 \cdots s_n=1\>$ and $n \geq 4$.  Note that $s_1, \cdots, s_{n-1}$ is a basis of $G$. Since $g$ is not conjugate to any boundary subgroup,  the  normal form of $g$  involves at least two elements $s_i, s_j$ with $1 \leq i < j \leq n-1$.  Since $n \geq 4$, replacing the relation $s_1 \cdots s_n=1$ by a cyclic permutation and by relabeling $s_1,  \cdots, s_n$, we may assume that $1 <i<j\leq n-1$. Then $g$ is hyperbolic in the following malnormal cyclic splitting $G=\<s_1, \cdots, s_i|\>*_{s_1\cdots s_i=s_n^{-1}\cdots s_{i+1}^{-1}}\<s_{i+1}, \cdots, s_n|\>$.  The geometric picture in that case is that the curve representing $g$ intersects at least one simple closed curve which separates the surface into two subsurfaces each of which has at least two boundary components.

Now suppose that $g(S) \geq 1$. Let $c$ be a non null-homotopic simple closed curve represented in a handle of $S$. Let $\Lambda$ be the dual splitting induced by $c$ which is in this case an HNN-extension. If $g$ is hyperbolic in that splitting (in particular if $g$ is a conjugate to a power of the element represented by $c$) then we are done. Otherwise $g$ is elliptic and thus it is conjugate to an element represented by a curve in the surface obtained by cutting along $c$. This new surface $S'$ has genus $g(S)-1$ and two new boundary components and thus we have $2g(S')+b(S')=2g(S)+b(S) \geq 4$. By induction, since $g$ is not conjugate to any  element of a boundary subgroup (also the new two boundaries) of $S'$,  there exists a malnormal cyclic splitting of $S'$ in which $g$ is hyperbolic and the boundary subgroups are elliptic. Since that splitting is compatible with $\Lambda$ we get the required result. 
\end{proof}

\subsection{Homogeneity, algebraic closure, independence}

In this subsection we collect facts about algebraic closure and independence which will be used in the sequel. We start from known facts which we cite for convenient reference and
extend them for our purposes.

Whenever there is more than one group around we may write $\acl_G(A)$ and $\acleq_G(A)$  to denote the algebraic closure of $A$ in the sense of the theory of $G$, and similarly
${A} \Ind^G_{C}{B}$ to describe independence in the theory of $G$.

\begin{proposition}
 \label{prop-spli-acl} Let $G$ be a torsion-free CSA-group. Let $\Lambda$ be a malnormal cyclic splitting of $G$. Then for any nontrivial vertex group $A$ we have $\acl(A)=A$.
\end{proposition}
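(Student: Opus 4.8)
The plan is to show that any element $g \in \acl(A) \setminus A$ would contradict the malnormality of the cyclic splitting $\Lambda$ by producing infinitely many conjugates of $g$ under automorphisms fixing $A$. Since $G$ is torsion-free and $\acl(A)$ is a subgroup containing $A$, it suffices to take $g \in \acl(A)$ and show $g \in A$. The key tool will be that in a malnormal cyclic splitting, the vertex group $A$ sits inside $G$ in a sufficiently rigid way that elements outside $A$ have large orbits under $\aut_A(G)$.

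**Main steps.**

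First I would use the action of $G$ on the Bass--Serre tree $T$ associated to $\Lambda$. Since $A$ is a vertex group, it is elliptic and fixes a vertex $v$ with $G_v = A$ (after conjugating, assume the stabilizer of $v$ is exactly $A$). Now take any $g \in \acl(A)$ and suppose $g \notin A$. Then $g$ moves $v$, or $g$ is elliptic but fixes a different vertex; in either case I would analyze the path from $v$ to $gv$. The crucial observation is that because edge groups are cyclic and malnormal in the adjacent vertex groups, for each edge $e$ incident to $v$ the edge group $G_e$ is a malnormal cyclic subgroup of $A$, and one can find an automorphism of $G$ fixing $A$ pointwise (a Dehn twist along an edge not adjacent to $v$, or more carefully a partial conjugation supported away from $A$) that moves $g$ nontrivially. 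Second, I would iterate: by choosing the twisting parameter to range over $\mathbb{Z}$ (using that $G$ is torsion-free, so the cyclic edge groups are infinite), one obtains infinitely many distinct images $\tau^n(g)$, all realizing $\tp(g/A)$. This contradicts $g \in \acl(A)$.

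**Handling the ``no nontrivial splitting'' case and the obstacle.**

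If $\Lambda$ is trivial (i.e. $G = A$), there is nothing to prove. Otherwise, the subtlety is to guarantee that the Dehn twist or partial conjugation genuinely moves $g$ and is not already realized as conjugation by an element of $A$ — this is exactly where malnormality of edge groups enters, via the CSA (conjugately separated abelian) hypothesis: centralizers of nontrivial elements are abelian and malnormal, so the Dehn twist along an edge $e$ by an element $c \in G_e$ acts on the set of elliptic elements by a conjugation only on the side containing their fixed point, and since $g \notin A = G_v$, $g$ has ``support'' on the far side of some edge and is genuinely moved. I expect the main obstacle to be the bookkeeping in the case where $g$ is itself elliptic (fixes a vertex $w \neq v$): here I would need to track the reduced edge-path from $v$ to $w$ and choose a twist along an interior edge of that path whose twisting element lies in the edge group but not in the stabilizer overlap, again using malnormality to ensure the images stay distinct. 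Once infinitely many conjugates of $g$ over $A$ are produced, $\acl(A) = A$ follows immediately, and since the proof only used the action on the Bass--Serre tree together with the CSA/malnormality hypotheses, no further results from the excerpt beyond standard Bass--Serre theory are needed.
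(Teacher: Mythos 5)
Your overall strategy is sound, and the mechanism you identify -- producing infinitely many images of any $g\notin A$ under automorphisms of $G$ fixing $A$ pointwise, with malnormality of the cyclic edge groups guaranteeing that Dehn twists and partial conjugations yield pairwise distinct images, hence $g\notin\acl(A)$ -- is the right one. Note, however, that the paper does not prove this proposition at all: it simply cites Proposition 4.3 of \cite{OHV}. So your argument is a genuinely different, self-contained route; it is also exactly the technique the paper deploys for its neighbouring results (the hyperbolic case is Proposition \ref{prop-autos} together with Lemma \ref{lem-config}, and the elliptic case is handled by the same Dehn-twist computations that appear in the proof of Lemma \ref{lemma-centralizer}). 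What your direct approach buys is independence from the external reference; what the citation buys the authors is not having to redo the normal-form analysis.

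Two places in your sketch need more care. First, the parenthetical ``a Dehn twist along an edge not adjacent to $v$'' is off: when $g$ lies in a vertex group adjacent to $A$ (say $G=A*_CB$ with $g\in B\setminus C$), the twist must be taken along the adjacent edge itself; this is harmless because the twist is the identity on the $A$-side, and your later formulation in terms of conjugation on the far side of an edge is the correct one. Second, the verification that the images are pairwise distinct is not mere bookkeeping: it is a normal-form computation in the amalgam or HNN extension that must cover (a) hyperbolic $g$, (b) elliptic $g$ in a conjugate of another vertex group, and (c) elliptic $g$ lying in a conjugate $A^h$ with $h\notin A$, where the twist moves the conjugator $h$ rather than the $A$-part, and one must check (using CSA plus malnormality) that equality of two images would force a syllable of $h$ into an edge group. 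Case (c) in particular is absent from your outline. All of these computations are of the type carried out at length in Lemma \ref{lem-config}, so the plan does go through, but as written your proof is a correct outline rather than a complete argument.
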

\begin{proof} A consequence of \cite[Proposition 4.3]{OHV}.
\end{proof}

\begin{theorem} \emph{\cite[Theorem 4.5]{OHV}}\label{theorem-acl}  If $F$ is a free group of finite rank  with nonabelian subgroup $A$, then  $\acl(A)$ coincides with the vertex group containing $A$ in the generalized malnormal (cyclic) $JSJ$-decomposition of $F$ relative to~$A$. \qed
\end{theorem}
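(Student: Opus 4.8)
The plan is to establish the two inclusions $\acl(A)\subseteq V$ and $V\subseteq\acl(A)$ separately, where $V$ denotes the vertex group containing $A$ in the generalized malnormal cyclic JSJ-decomposition $\Lambda$ of $F$ relative to $A$; recall that $\Lambda$ is obtained from a malnormal cyclic JSJ-decomposition $\Lambda_1$ of the freely $A$-indecomposable free factor $F_1\ge A$ (in a free decomposition $F=F_1*F_2$) by attaching $F_2$ along a trivial edge. We may assume $A$ finitely generated. The first step is to observe that $V$ is in fact a \emph{rigid} vertex group of $\Lambda_1$: since $A$ is nonabelian it lies neither in the free factor $F_2$ nor in an abelian vertex group, and it cannot lie in a surface-type vertex group, because $A$ is elliptic in $\Lambda$ and a nonabelian elliptic subgroup of a surface group relative to its boundary (and to $A$) is forced into a cyclic boundary subgroup.

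For the inclusion $\acl(A)\subseteq V$ the point is that $\Lambda$ is a malnormal cyclic splitting of the torsion-free CSA-group $F$ --- its only edge outside $\Lambda_1$ carries the (vacuously malnormal) trivial edge group. Hence Proposition~\ref{prop-spli-acl}, applied to the nontrivial vertex group $V$, gives $\acl(V)=V$, and monotonicity of the algebraic closure together with $A\le V$ yields $\acl(A)\subseteq\acl(V)=V$.

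The substantive inclusion is $V\subseteq\acl(A)$, which I would prove in four steps. (1) Every $\sigma\in Aut_A(F_1)$ preserves $V$ setwise: $\sigma(\Lambda_1)$ is again a malnormal cyclic JSJ-decomposition of $F_1$ relative to $A$, so by canonicity of the JSJ (automorphisms permute the conjugacy classes of rigid vertex groups) $\sigma(V)=gVg\inv$ for some $g\in F_1$; but a nonabelian elliptic subgroup fixes a \emph{unique} vertex of the Bass--Serre tree of $\Lambda_1$ (otherwise it would lie in a cyclic edge group), and $A$ fixes both the vertex stabilized by $V$ and the one stabilized by $gVg\inv$, so $g\in V$ and $\sigma(V)=V$; thus restriction gives a homomorphism $Aut_A(F_1)\to Aut(V)$. (2) By Lemma~\ref{lem-modular}, every $\sigma\in Mod_A(F_1)$ restricts on the rigid vertex group $V$ to conjugation by some $c_\sigma$; since $\sigma$ fixes the nonabelian group $A\le V$ pointwise, $c_\sigma$ centralizes $A$, hence $c_\sigma=1$ by commutative transitivity of $F$, so $\sigma|_V=\mathrm{id}$. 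By Theorem~\ref{mod-index}, $Mod_A(F_1)$ has finite index in $Aut_A(F_1)$, so the image of $Aut_A(F_1)$ in $Aut(V)$ is finite and every $v\in V$ has finite $Aut_A(F_1)$-orbit. (3) Now $Aut_A(F)$ is generated by the extensions (fixing $F_2$) of elements of $Aut_A(F_1)$ together with the automorphisms supported on the free-product decomposition $F=F_1*F_2$, and --- using that $F_1$ is freely $A$-indecomposable and $C_F(A)=1$ --- the latter all restrict to the identity on $V\le F_1$; consequently the $Aut_A(F)$-orbit of any $v\in V$ coincides with its $Aut_A(F_1)$-orbit, hence is finite. (4) Finally, since $A$ is finitely generated, $F$ is prime over $A$, so $\acl(A)\subseteq F$; and by the homogeneity of $F$ \cite{hom-ould,perin-homo} an element of $F$ belongs to $\acl(A)$ precisely when its $Aut_A(F)$-orbit is finite. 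Hence $V\subseteq\acl(A)$, and together with the first inclusion $\acl(A)=V$.

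I expect the main difficulties to be steps (3) and (4): pinning down exactly which automorphisms generate $Aut_A(F)$ beyond $Mod_A(F_1)$ and verifying they fix $V$, and --- more seriously --- the model-theoretic transfer in (4) from orbit-finiteness inside $F$ to algebraicity in the monster model, which rests on the primality and homogeneity of the free group over finitely generated parameter sets. A secondary delicate point is the identification of $V$ as a rigid vertex group, which relies on the precise behaviour of relative JSJ-decompositions at surface-type vertices. By contrast the computation in step (2) is short once Lemma~\ref{lem-modular} and the CSA property are available, and the inclusion $\acl(A)\subseteq V$ is essentially immediate from Proposition~\ref{prop-spli-acl}.
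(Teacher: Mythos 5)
The paper does not actually prove this statement: it is quoted verbatim from \cite{OHV} (Theorem 4.5 there) and used as a black box, so there is no internal proof to compare against. Judged on its own, your strategy is sound and is exactly the toolkit the paper deploys for the closely analogous Proposition~\ref{prop1}, $(4)\Rightarrow(2)\Rightarrow(1)$: the inclusion $\acl(A)\subseteq V$ from Proposition~\ref{prop-spli-acl} plus monotonicity is fine, and for $V\subseteq\acl(A)$ the combination of Theorem~\ref{mod-index}, Lemma~\ref{lem-modular} and triviality of the centralizer of a nonabelian subgroup in a CSA group does give that every $v\in V$ has finite $Aut_A(F_1)$-orbit (note that your step (1) is dispensable: writing $f=f_i\circ\sigma$ with $\sigma\in Mod_A(F_1)$ and $\sigma|_V=\mathrm{id}$ already shows the orbit of $v$ is $\{f_1(v),\dots,f_p(v)\}$, with no need for $\sigma(V)=V$). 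Your step (3) is also more easily handled by observing that $F_1$ is the \emph{minimal} free factor of $F$ containing $A$, hence is preserved by every element of $Aut_A(F)$, so restriction lands in $Aut_A(F_1)$; one need not exhibit generators of $Aut_A(F)$.

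The one point where your write-up has a real gap is step (4). Homogeneity of $F$ only identifies the realizations of $\tp(v/\bar a)$ \emph{inside $F$} with the $Aut_A(F)$-orbit of $v$; finiteness of that orbit does not by itself bound the realizations in a saturated model, so ``finite orbit iff algebraic'' is false as stated without a further input. The missing ingredient is Proposition~\ref{prop-isol} (atomicity of $F_1$ over a finite generating tuple $\bar a$ of $A$ when $F_1$ is freely $A$-indecomposable and $A$ nonabelian): take a formula $\varphi_0$ isolating $\tp(v/\bar a)$ in $F_1\preceq F$; by homogeneity (Theorem~\ref{thm1}) its realizations in $F_1$ lie in the finite orbit, and since $\varphi_0$ isolates the type, having at most $p$ realizations is first order and transfers to the monster, giving $v\in\acl(\bar a)$. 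This is precisely how the paper argues $(2)\Rightarrow(1)$ in Proposition~\ref{prop1}, and your appeal to ``primality'' is gesturing at the right thing, but the isolation step must be made explicit and forces you to work in $F_1$ (which is prime over $A$), not in $F=F_1*F_2$ (which is not). A second, smaller loose end is the reduction to finitely generated $A$ for the inclusion $V\subseteq\acl(A)$: replacing $A$ by a finitely generated nonabelian $A_0\leq A$ can a priori shrink the vertex group, so one should either invoke Theorem~\ref{thm-finite-g-acl} or choose $A_0$ large enough that the relative JSJ stabilizes.
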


\begin{theorem} \emph{\cite{OHV}} \label{thm-finite-g-acl} Let $\Gamma$ be a torsion-free hyperbolic group and $A$ a subgroup of $\Gamma$. Then $\acl(A)$ is finitely generated. \qed
\end{theorem}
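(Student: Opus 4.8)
The plan is to identify $\acl(A)$ explicitly as a vertex group of a splitting of $\Gamma$, and then to read off finite generation from the finiteness of JSJ-decompositions of finitely generated hyperbolic groups. We may assume $\Gamma$ is nonabelian (if $\Gamma\cong\mathbb Z$ or $\Gamma=1$ every subgroup is finitely generated) and that $A$ is finitely generated, which --- since $\langle A\rangle\subseteq\acl(A)$ always --- is the only case in which there is anything to prove. First I would dispose of the abelian cases. If $A=1$ then $\acl(A)=1$: every nontrivial $g\in\Gamma$ has infinite conjugacy class ($\Gamma$ is not virtually cyclic and centralizers are cyclic) and all conjugates of $g$ realize $\tp(g)$, so $\tp(g)$ has infinitely many realizations. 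If $A=\langle a\rangle$ with $a\neq1$, then conjugation by powers of $a$ fixes $A$ pointwise, so any $g\in\acl(A)$ satisfies $a^nga^{-n}=a^mga^{-m}$ for some $n\neq m$; since $\Gamma$ is torsion-free and CSA this forces $g\in C_\Gamma(a)$, a cyclic group, so $\acl(A)$ is cyclic. From now on $A$ is nonabelian, whence $C_\Gamma(A)=1$ because $\Gamma$ is CSA. Finally I would either work throughout with the generalized malnormal cyclic JSJ of $\Gamma$ relative to $A$ --- retaining the Grushko free factor $\Gamma_2$ as a vertex group, which is what Lemma~\ref{lem-modular} is set up for --- or, equivalently, reduce at the outset to the case that $\Gamma$ is freely $A$-indecomposable, applying Proposition~\ref{prop-spli-acl} to the free splitting $\Gamma=\Gamma_1*\Gamma_2$ with $A\le\Gamma_1$ and using that nonabelian free factors are elementarily embedded.

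Now let $\Lambda$ be a malnormal cyclic JSJ-decomposition of $\Gamma$ relative to $A$ and $G_v$ the vertex group containing $A$. Since $\Gamma$ is finitely generated and hyperbolic, $\Lambda$ may be taken with finite underlying graph and cyclic --- hence finitely generated --- edge groups, so every vertex group, in particular $G_v$, is finitely generated. Because $A\le G_v$ is nonabelian, $G_v$ is not an abelian vertex group; and since a nonabelian subgroup cannot be conjugated into a boundary subgroup, $G_v$ is not of surface type either, for a surface-type vertex $\pi_1(S)$ carrying $A$ with $2g(S)+b(S)\ge4$ would, by Lemma~\ref{split-surface}, admit a cyclic refinement in which an element of $A$ is hyperbolic, contradicting $A$-ellipticity --- the finitely many low-complexity surfaces being treated separately. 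Hence $G_v$ is rigid. As $\Lambda$ is a malnormal cyclic splitting of the torsion-free CSA group $\Gamma$ and $G_v$ is a nontrivial vertex group, Proposition~\ref{prop-spli-acl} gives $\acl(G_v)=G_v$, so that $\acl(A)\subseteq\acl(G_v)=G_v$.

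The reverse inclusion $G_v\subseteq\acl(A)$ is the crux, and here I would use the modular group. By Lemma~\ref{lem-modular} every $\sigma\in Mod_A(\Gamma)$ restricts on the rigid group $G_v$ to conjugation by some $h_\sigma$; since $\sigma$ fixes $A\le G_v$ pointwise, $h_\sigma$ centralizes $A$, so $h_\sigma\in C_\Gamma(A)=1$ and $\sigma$ is the identity on $G_v$. By Theorem~\ref{mod-index} (in its generalized form when $\Gamma$ is freely $A$-decomposable), $Mod_A(\Gamma)$ has finite index in $Aut_A(\Gamma)$; choosing coset representatives $\sigma_1,\dots,\sigma_N$, every automorphism of $\Gamma$ fixing $A$ pointwise agrees with some $\sigma_i$ on $G_v$, so for each $g\in G_v$ the orbit $Aut_A(\Gamma)\cdot g$ has at most $N$ elements. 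By homogeneity of $\Gamma$~\cite{hom-ould,perin-homo} this orbit is exactly the set of realizations of $\tp(g/A)$ in $\Gamma$, so $\tp(g/A)$ has finitely many realizations and $g\in\acl(A)$. Therefore $\acl(A)=G_v$, which is finitely generated. (For a free group the equality $\acl(A)=G_v$ is exactly Theorem~\ref{theorem-acl}, so there the result is immediate.)

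The step I expect to be the main obstacle is showing that the vertex group of the relative JSJ containing a nonabelian $A$ is rigid --- genuinely excluding surface-type vertices and disposing of the low-complexity exceptional surfaces --- together with the bookkeeping in the freely $A$-decomposable case needed to check that the automorphisms attached to the free factor act on $G_v$ through a finite group, so that the finite-index statement of Theorem~\ref{mod-index} can still be brought to bear.
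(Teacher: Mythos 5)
The paper itself gives no proof of this statement --- it is quoted from \cite{OHV} with a \emph{qed} --- so your proposal can only be measured against the argument in that reference; your overall strategy (identify $\acl(A)$ with the vertex group $G_v$ containing $A$ in the generalized malnormal cyclic JSJ-decomposition relative to $A$, bounding it above via Proposition~\ref{prop-spli-acl} and below via the finite index of the modular group together with rigidity of $G_v$) is indeed the right reconstruction, and you correctly recognize that the lower bound is indispensable, since a subgroup of a finitely generated group need not be finitely generated. Two steps, however, contain genuine gaps.

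First, the reduction to finitely generated $A$ is not justified. The theorem is stated for an arbitrary subgroup, and the infinitely generated case is neither vacuous nor formally reducible to the finitely generated one: the inclusion $\langle A\rangle\subseteq\acl(A)$ does not prevent $\acl(A)$ from being finitely generated (finitely generated groups contain infinitely generated subgroups), so there \emph{is} something to prove. By finite character of algebraic closure, $\acl(A)=\bigcup_n\acl(A_n)$ for an exhaustion of $A$ by finitely generated subgroups, and one must show this ascending chain of finitely generated groups stabilizes --- for instance by bounding the complexity of the vertex groups $\acl(A_n)$ uniformly in terms of $\Gamma$ (in the free case, a rank bound plus Takahashi's theorem on ascending chains of bounded rank). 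This is genuine content of the theorem and cannot be dismissed as "the only case in which there is anything to prove."

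Second, and more seriously for the hyperbolic case, the inclusion $G_v\subseteq\acl(A)$ rests on "homogeneity of $\Gamma$", but Theorem~\ref{thm1} is a statement about free groups, and torsion-free hyperbolic groups are in general \emph{not} homogeneous (non-homogeneous surface groups are exhibited in \cite{perin-homo}). What is actually needed is the relative machinery of \cite{hom-ould}: when $A$ is nonabelian and $\Gamma$ is freely $A$-indecomposable, the types over $A$ realized in $\Gamma$ are isolated and any two realizations of the same type are related by an element of $Aut_A(\Gamma)$. Both halves are required --- the orbit description to bound the number of realizations in $\Gamma$ by the index of $Mod_A(\Gamma)$, and the isolation to pass from "finitely many realizations in the particular model $\Gamma$" to "$\tp(g/A)$ is algebraic" (a non-isolated type can have few realizations in a small model without being algebraic; compare Proposition~\ref{prop-isol}, again stated only for free groups). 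With these substitutes your argument goes through; as written, the key step is supported by a theorem that fails for the class of groups in question. The remaining points you flag yourself --- excluding surface-type vertices containing $A$, and the finite-index statement for the generalized modular group when $\Gamma$ is freely $A$-decomposable --- are indeed where further care is needed, but they are surmountable along the lines you indicate.
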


\begin{proposition}\label{prop-isol} \emph{\cite[Proposition 5.9]{hom-ould}} Let $F$ be a nonabelian free group of finite rank and $\bar a$ a tuple from $F$ such that the subgroup $A$ generated by $\bar a$ is nonabelian and $F$ is freely  $A$-indecomposable. Then for any tuple $\bar b$ contained in $F$, the type $tp(\bar b/\bar c)$ is isolated.  \qed
\end{proposition}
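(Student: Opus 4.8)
The plan is to produce, over the parameters $\bar a$, a single first-order formula isolating $\tp(\bar b/\bar a)$, by converting the rigidity of $F$ relative to $A$ --- which is encoded in the relative JSJ decomposition --- into a first-order condition.

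\emph{Reduction to a generating tuple.} Pick a finite tuple $\bar d$ from $F$ so that $\langle\bar a,\bar b,\bar d\rangle=F$. If $\tp(\bar b\bar d/\bar a)$ is isolated by a formula $\psi(\bar x,\bar y,\bar a)$, then $\exists\bar y\,\psi(\bar x,\bar y,\bar a)$ isolates $\tp(\bar b/\bar a)$: any tuple realizing it in a model of $T_{fg}$ containing $A$ is the $\bar x$-part of a realization of $\psi$, hence has type $\tp(\bar b/\bar a)$. So we may assume $F=\langle\bar a,\bar b\rangle$, and the statement becomes that $F$ is atomic over $A$. Since $F$ is a free group of finite rank, it is finitely presented; fix a finite presentation $F=\langle\bar a,\bar x\mid\Sigma(\bar a,\bar x)\rangle$ in which the sub-tuple $\bar a$ generates $A$. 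Then for any group $G$ containing $A$, the realizations $\bar b'\in G$ of $\bigwedge_{R\in\Sigma}R(\bar a,\bar x)=1$ are exactly the homomorphisms $\nu_{\bar b'}\colon F\to G$ with $\nu_{\bar b'}|_A=\mathrm{id}_A$ and $\nu_{\bar b'}(\bar b)=\bar b'$.

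\emph{The isolating formula.} It now suffices to find a \emph{finite} set of inequations $\Delta(\bar a,\bar x)\neq 1$ satisfied by $\bar b$ such that, for every $\bar M\models T_{fg}$ containing $A$ and every $\bar b'\in\bar M$ realizing $\Sigma=1\wedge\Delta\neq 1$, the homomorphism $\nu_{\bar b'}$ is an \emph{elementary} embedding over $A$; for then $\tp(\bar b'/\bar a)=\tp(\nu_{\bar b'}(\bar b)/\bar a)=\tp(\bar b/\bar a)$, and $\Sigma=1\wedge\Delta\neq 1$ isolates $\tp(\bar b/\bar a)$. To build $\Delta$, fix a generalized malnormal cyclic JSJ-decomposition $\Lambda$ of $F$ relative to $A$; as $F$ is freely $A$-indecomposable, its free factor is trivial. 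By Lemma~\ref{lem-modular} every $\sigma\in\mathrm{Mod}_A(F)$ acts as a conjugation on the rigid vertex groups and on the boundary subgroups of the surface-type vertex groups of $\Lambda$, and by Theorem~\ref{mod-index} $\mathrm{Mod}_A(F)$ has finite index in $\mathrm{Aut}_A(F)$. Analysing a homomorphism $\nu\colon F\to\bar M$ fixing $A$ by the shortening argument of Rips--Sela together with equational Noetherianity, one extracts finitely many proper quotients $q_i\colon F\twoheadrightarrow L_i$ of $F$ relative to $A$ such that any such $\nu$ which factors through none of the $q_i$ is --- up to precomposition by an element of $\mathrm{Mod}_A(F)$ --- \emph{non-degenerate}: injective on every rigid vertex group of $\Lambda$ and mapping each surface-type vertex group onto a non-abelian subgroup of $\bar M$ on which no element outside the boundary subgroups is killed; by Lemma~\ref{split-surface}, applied inside the surface groups, non-degeneracy is witnessed by finitely many elements being non-trivial and finitely many commutators being non-trivial under $\nu$. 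Each $L_i$ is a limit group, hence finitely presented; into $\Delta$ we put, for each $i$, one relator of $L_i$ which fails in $F$, together with these surface non-degeneracy inequations. The identity $F\to F$ (the case $\bar b'=\bar b$) factors through no proper quotient and is non-degenerate, so $\bar b$ satisfies $\Delta$.

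\emph{Non-degenerate $\Rightarrow$ elementary, and the main obstacle.} One then shows, by induction on the complexity of $\Lambda$, that a non-degenerate $\nu\colon F\to\bar M$ fixing $A$ is elementary over $A$: non-degeneracy and Lemma~\ref{lem-modular} force $\nu$ to be injective; the image $\nu(F)$ then carries the structure of $\Lambda$ inside $\bar M$, and Merzlyakov-type formal solutions over the surface-type vertex groups provide retractions onto the boundary subgroups, exhibiting $\nu(F)$ as a hyperbolic tower over $A$ inside $\bar M$, whence $\nu$ is elementary over $A$ by the known criterion for elementarity in torsion-free hyperbolic groups (via quantifier elimination down to $\forall\exists$-formulas, this amounts to checking preservation of $\forall\exists$-formulas over $A$). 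I expect the main obstacle to be exactly this last step together with the extraction of the finite family $\{L_i\}$: turning the dichotomy ``$\nu$ is degenerate, or an elementary embedding over $A$'' into a genuinely first-order condition on $\bar b'$ relies on the fine structure of the relative JSJ, on the finite index of $\mathrm{Mod}_A(F)$ in $\mathrm{Aut}_A(F)$, and on Merzlyakov-type implicit-function theorems over the free group --- everything else (the reduction and the bookkeeping around $\Sigma$) being routine.
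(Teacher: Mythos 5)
First, note that the paper does not prove this proposition: it is quoted from \cite{hom-ould} (Proposition 5.9) with no internal argument, so the only comparison available is with the strategy of the cited proof. Your reduction to a generating tuple is correct, and you have correctly identified the relevant toolbox (the relative JSJ, the shortening argument, the finitely many shortening quotients, and the finite index of $Mod_A(F)$ in $Aut_A(F)$), but the way you deploy it contains a genuine gap. You attempt to certify directly that every realization $\bar b'$ of $\Sigma=1\wedge\Delta\neq 1$ in an \emph{arbitrary} model $\bar M\models T_{fg}$ yields an elementary embedding $\nu_{\bar b'}\colon F\to\bar M$ over $A$. Two things go wrong there. The shortening argument, and hence the extraction of the finite family $\{L_i\}$, requires the target group to act suitably on a hyperbolic space; it applies to homomorphisms into $F$ or into a torsion-free hyperbolic group, not into an arbitrary (e.g.\ saturated, non-finitely-generated) model of $T_{fg}$. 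More seriously, your criterion for elementarity is not one: exhibiting $\nu(F)$ as a hyperbolic tower over $A$ inside $\bar M$ would at best address $A\preceq\nu(F)$, whereas what you need is $\nu(F)\preceq\bar M$; there is no tower-type criterion for elementary embeddings into arbitrary models (Perin's characterization concerns finitely generated ambient groups), injectivity of $\nu$ is far from sufficient, and in general $F$ is not a tower over $A$ at all since the relative JSJ has rigid vertex groups.

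The missing idea that repairs the argument --- and around which the cited proof is organized --- is that isolation only requires the implications $\forall\bar x\,(\phi(\bar x,\bar a)\rightarrow\theta(\bar x,\bar a))$, for $\theta\in\tp(\bar b/\bar a)$, to hold \emph{in $F$ itself}: these are first-order sentences with parameters $\bar a$ and therefore transfer to every elementary extension of $F$, in particular to the monster model. This reduces the whole problem to solutions of $\Sigma=1\wedge\Delta\neq1$ inside $F$, where the shortening argument, the relative co-Hopf property and Theorem~\ref{mod-index} do apply and show that the solution set meets only finitely many $Aut_A(F)$-orbits; one then cuts down, by finitely many further formulas, to the orbits realizing $\tp(\bar b/\bar a)$ (an automorphism of $F$ fixing $\bar a$ obviously preserves the type, so no converse homogeneity statement is needed at this point). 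As written, your proposal bypasses this reduction and therefore rests on a step --- ``non-degenerate implies elementary into an arbitrary model'' --- that is not available.
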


\begin{theorem} \emph{\cite{hom-ould, perin-homo}} \label{thm1} Let $F$ be a nonabelian free group of finite rank. For any  tuples $\bar a, \bar b \in F^n$ and for any subset $P \subseteq F$,  if $tp^F(\bar a/ P)=tp^F(\bar b/ P)$ then there exists an automorphism   of $F$  fixing $P$ pointwise  and sending $\bar a$ to $\bar b$. \qed
\end{theorem}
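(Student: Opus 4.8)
The plan is to reduce the theorem to a back-and-forth construction inside $F$ and to feed that construction with the isolation result of Proposition~\ref{prop-isol}. I would first treat the case of a finite parameter set: absorbing $P$ into the tuples, it suffices to show that if $\tp(\bar p\bar a)=\tp(\bar p\bar b)$ in $F$ then there is $\alpha\in\aut(F)$ with $\alpha(\bar p)=\bar p$ and $\alpha(\bar a)=\bar b$. The case of an arbitrary $P$ is then obtained by running the back-and-forth over $P$ as well, interleaving elements of $P$ into the construction so that the resulting automorphism restricts to the identity on $P$; at each finite stage only finitely many parameters from $P$ have been committed, so every extension step still takes place over a finite set (and free indecomposability over a subgroup is inherited by larger subgroups, so the main hypothesis survives these enlargements).

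Since $F$ is countable, the back-and-forth reduces to the \emph{extension property}: whenever $\tp(\bar a/Q)=\tp(\bar b/Q)$ for a finite set $Q$ (containing $P$) and $c\in F$ is given, there is $d\in F$ with $\tp(\bar a c/Q)=\tp(\bar b d/Q)$, and symmetrically. The point is to produce $d$ in $F$ itself rather than in some elementary extension, and for this the natural device is isolation: if $\tp(c/\bar a\cup Q)$ is isolated, the isolating formula is a formula over $\bar a\cup Q$ implying a complete type; transported along $\tp(\bar a/Q)=\tp(\bar b/Q)$ it becomes a consistent isolated type over $\bar b\cup Q$, which is therefore realized in $F$ by some $d$, and that $d$ does the job. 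Proposition~\ref{prop-isol} supplies exactly what is needed in the generic situation: if $A=\langle\bar a\cup Q\rangle$ is nonabelian and $F$ is freely $A$-indecomposable, then every type over $\bar a\cup Q$ is isolated, so the extension property holds there.

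It remains to remove the two hypotheses of Proposition~\ref{prop-isol}. First, one may enlarge the tuples by finitely many elements of $\operatorname{dcl}(\bar a\cup Q)$ without disturbing the hypothesis $\tp(\bar a/Q)=\tp(\bar b/Q)$; in particular, whenever $\langle\bar a\cup Q\rangle$ is nonabelian one can assume a noncommuting pair is already present in the tuple, and then, using Theorem~\ref{theorem-acl} and the description of $\acl$, compare the generalized malnormal JSJ-decompositions of $F$ relative to the two subgroups. If $F$ is freely $\langle\bar a\cup Q\rangle$-decomposable, write $F=F_1*F_2$ with $\langle\bar a\cup Q\rangle\leq F_1$ and $F_1$ minimal such (hence freely indecomposable over it); one then argues inside $F_1$ --- legitimately, since $F_1$, being nonabelian, is an elementary subgroup of $F$, so the type data restricts along the retraction $F\to F_1$ --- and extends an automorphism of $F_1$ fixing $Q$ to one of $F$ fixing $Q$ by Grushko's theorem, acting appropriately on the complementary free factor. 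The genuinely degenerate cases --- $\langle\bar a\cup Q\rangle$ abelian, hence cyclic, or $Q$ trivial with $\bar a$ inside a proper free factor --- are handled directly: properties such as ``being a proper power'' or ``being a product of $k$ commutators'' are first-order, and together with Whitehead's algorithm and the known structure of $\aut(F_n)$ one checks that tuples with the same quantifier-free and existential type already lie in the same $\aut(F)$-orbit.

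The main obstacle is precisely this last layer. The back-and-forth skeleton and the transport-of-isolating-formula step are routine; the hard part is guaranteeing that each required realization actually exists \emph{inside} $F$ once free decompositions or abelian/low-complexity configurations occur, because there the clean isolation mechanism of Proposition~\ref{prop-isol} is unavailable. Ultimately this forces one to invoke Sela's analysis of systems of equations over $F$ --- the shortening argument and the structure of $\operatorname{Hom}(G,F)$ for limit groups $G$, equivalently Perin's theory of hyperbolic floors and towers --- which is exactly what underlies Proposition~\ref{prop-isol}, Theorem~\ref{theorem-acl}, and the free-factor comparison, and which therefore cannot really be bypassed.
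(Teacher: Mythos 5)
First, a point of reference: the paper does not prove Theorem~\ref{thm1}; it is quoted from \cite{hom-ould,perin-homo} and used as a black box, so there is no internal proof to compare yours against, and your sketch has to be measured against the published arguments. Your generic-case mechanism (a back-and-forth over the countable model $F$, fed by the isolation of Proposition~\ref{prop-isol} and the transport of the isolating formula along $\tp(\bar a/Q)=\tp(\bar b/Q)$) is fine in the forth direction, but it breaks in the back direction: when you are handed $d'\in F$ and must produce $c'$, you need $\tp(d'/\bar b_n Q_n)$ to be isolated (or at least to be the transport of a type realized over $\bar a_nQ_n$), and Proposition~\ref{prop-isol} gives this only if $F$ is freely indecomposable relative to $\langle\bar b\cup Q\rangle$ --- a hypothesis you have arranged on the $\bar a$-side but not on the $\bar b$-side. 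That free indecomposability relative to the generated subgroup is an invariant of the type is true, but it is itself one of the substantive steps of both published proofs. Without it your construction yields at best a one-way elementary embedding $f:F\to F$ over $Q$ with $f(\bar a)=\bar b$, and upgrading that to an automorphism again needs real input (Perin's theorem that elementary subgroups of $F$ are free factors, or the Takahasi/chain argument of \cite{hom-ould}).

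Second, the cases you label degenerate are the heart of the theorem, and your treatment of them would fail. When $F$ is freely decomposable relative to $\langle\bar a\cup Q\rangle$ you propose to ``argue inside $F_1$'', but $\bar b$ has no reason to lie in $F_1$, and there is no elementary retraction $F\to F_1$ along which the type data restricts; what is actually needed is that the minimal free factors containing $\bar a\cup Q$ and $\bar b\cup Q$ are isomorphic over $Q$ by a map carrying $\bar a$ to $\bar b$, which is essentially the theorem one level down. This is exactly what \cite[Proposition 7.1]{perin-homo} provides (the dichotomy between such an embedding and a noninjective preretraction, the latter contradicting that $F$ is not a hyperbolic tower over a proper subgroup), and the paper's own Proposition~\ref{prop-generichomog} runs precisely that argument in the free-product setting --- that is the model you should follow. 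Finally, the claim that the cyclic case is ``handled directly'' by first-order properties such as being a proper power together with Whitehead's algorithm is wrong: Whitehead's algorithm decides orbit equivalence, it does not show that equality of types implies orbit equivalence; indeed the statement that an element with the same type as a primitive element is primitive is a celebrated corollary of this theorem, not an elementary input to it. Your closing concession that Sela's machinery ``cannot really be bypassed'' is accurate, but it means the text is a plan that defers the essential difficulties rather than a proof.
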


One ingredient in the proof of Theorem~\ref{T-main} is a
result of Sela which essentially allows us to work in a free group. In [Sel09], Sela shows that if $\G$ is a torsion-free hyperbolic group which  is not 
elementarily equivalent to a free group, then $\G$ has a \textit{minimal} elementary subgroup, 
denoted by $EC(\G)$, called the \textit{elementary core} of  $\G$ (see  for instance \cite[Section 8]{hom-ould} for some  properties of the elementary core).  
It follows from the definition of the elementary core   that if $\Gamma$ is a nonabelian  torsion-free hyperbolic group then $\Gamma$ is elementarily equivalent to $\Gamma *F_n$ for any free group of rank $n$. Combined with \cite[Theorem 7.2]{sel-free-product} it gives the following  stronger result:

\begin{theorem} \label{thm-equiv-infiniterank}
Any nonabelian torsion-free hyperbolic group $\Gamma$ is elementarily equivalent to
$\Gamma*F$ for any free group $F$.\qed
\end{theorem}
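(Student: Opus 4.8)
Since the paragraph preceding the statement already disposes of the finite rank case --- the definition of the elementary core gives $\Gamma\equiv\Gamma * F_n$ for every $n<\omega$, and for $F$ trivial there is nothing to prove --- all that remains is to handle a free group $F$ of infinite rank, and this is exactly where \cite[Theorem 7.2]{sel-free-product} is meant to be used. The plan is a routine (possibly transfinite) elementary-chain argument.

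I would fix a basis of $F$ and write $F=\bigcup_{2\leq n<\omega}F_n$ as an increasing union of nonabelian finitely generated free factors (and more generally, for $F$ of uncountable rank, as a continuous increasing union of nonabelian free factors starting from one of rank $2$); correspondingly $\Gamma * F=\bigcup_{2\leq n<\omega}(\Gamma * F_n)$ is an increasing union of torsion-free hyperbolic groups. For each $n$ the group $\Gamma * F_n$ is a nonabelian free factor of $\Gamma * F_{n+1}$ (with the obvious retraction), so \cite[Theorem 7.2]{sel-free-product} tells us that the inclusion $\Gamma * F_n\hookrightarrow\Gamma * F_{n+1}$ is elementary. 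Hence $(\Gamma * F_n)_{2\leq n<\omega}$ is an elementary chain, and by Tarski's theorem on elementary chains its union $\Gamma * F$ is an elementary extension of $\Gamma * F_2$; in particular $\Gamma * F\equiv\Gamma * F_2$, which together with the finite rank case yields $\Gamma * F\equiv\Gamma * F_2\equiv\Gamma$. (If \cite[Theorem 7.2]{sel-free-product} is stated instead as preservation of elementary equivalence under free products, the argument is the same: the free-factor inclusions give $F_2\preceq F_3\preceq\cdots$, hence $F\equiv F_2$, whence $\Gamma * F\equiv\Gamma * F_2\equiv\Gamma$.)

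There is no real obstacle here once the two results of Sela are quoted correctly: the elementary core description, which supplies the finite rank base case, and \cite[Theorem 7.2]{sel-free-product}, which gives that the relevant free-factor inclusions are elementary (equivalently, that free products respect elementary equivalence). The only step deserving a moment's attention is checking that the hypotheses of \cite[Theorem 7.2]{sel-free-product} do apply to the inclusions $\Gamma * F_n\hookrightarrow\Gamma * F_{n+1}$ --- nonabelianity of the free factor together with the evident retraction --- after which the statement follows from the elementary-chain union being elementary over its members.
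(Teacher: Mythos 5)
Your proposal is correct and follows essentially the same route as the paper, which offers no written proof beyond the remark that the finite rank case comes from the definition of the elementary core and that the general case follows by combining this with \cite[Theorem 7.2]{sel-free-product}; your elementary-chain (Tarski union) elaboration, together with the parenthetical alternative of first deducing $F\equiv F_2$ and then applying preservation of elementary equivalence under free products, is exactly the standard way to assemble these two ingredients.
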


We will be using the following variant of this result:

 \begin{lemma} \label{lem-elem} Let $\G$ be a  torsion-free hyperbolic group not elementarily equivalent to a free group. Let $F$ be a free group  and $C$ a free factor of $F$. Then $EC(\G)*C \preceq \G *F$.  \end{lemma}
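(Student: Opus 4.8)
The plan is to derive Lemma~\ref{lem-elem} by combining Theorem~\ref{thm-equiv-infiniterank} with a transitivity/base-change argument for elementary submodels, using the algebraic structure of free factors. First I would write $F = C * F'$ for some free group $F'$ (possibly of infinite rank), so that $\G * F = \G * C * F' = (\G * C) * F'$. Since $\G$ is a nonabelian torsion-free hyperbolic group, so is $\G * C$ (it is hyperbolic when $C$ has finite rank, and in the infinite-rank case one argues by a direct limit / finite-character argument, as elementary equivalence and elementary embedding are witnessed by formulas involving finitely many parameters). Hence by Theorem~\ref{thm-equiv-infiniterank} applied to $\G * C$ we get $\G * C \preceq (\G * C) * F' = \G * F$.

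Next I would identify $EC(\G) * C$ inside $\G * C$ and show $EC(\G) * C \preceq \G * C$. By definition of the elementary core, $EC(\G) \preceq \G$. The point is that taking free products with a fixed free group $C$ preserves elementary embeddings: if $H \preceq K$ then $H * C \preceq K * C$. This should follow from Theorem~\ref{thm-equiv-infiniterank}-style results on free products together with a back-and-forth or quantifier-elimination argument; alternatively, one can invoke \cite[Theorem 7.2]{sel-free-product} directly, since that is exactly the result quoted in the excerpt as giving the passage from elementary equivalence to elementary embedding for free products. Concretely, $EC(\G) \preceq \G$ and $C \preceq C$ yield $EC(\G) * C \preceq \G * C$ by the free-product-of-elementary-embeddings principle.

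Finally I would chain the two embeddings: $EC(\G) * C \preceq \G * C \preceq \G * F$, and conclude $EC(\G) * C \preceq \G * F$ by transitivity of $\preceq$. One must check at the start that $EC(\G) * C$ and $\G * C$ are naturally realized as subgroups of $\G * F$ compatibly with the identification $F = C * F'$, so that the two elementary-embedding statements compose; this is just the universal property of free products and is routine once the decomposition $\G * F = (\G * C) * F'$ is fixed.

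The main obstacle I anticipate is the infinite-rank case: Theorem~\ref{thm-equiv-infiniterank} and the free-product results are typically stated for hyperbolic groups and finite-rank free factors, whereas here $F$ (and hence $F'$, $C$) may have infinite rank. I would handle this by a compactness/finitary reduction: any instance of failure of $EC(\G)*C \preceq \G*F$ would be witnessed by a first-order formula with parameters from a finitely generated free factor $C_0 \leq C$ and would already fail in $EC(\G)*C_0 \preceq \G*(C_0 * F'_0)$ for a finite-rank free factor $F'_0$, contradicting the finite-rank case. Making this reduction precise---ensuring that the relevant parameters and witnesses can be captured inside a finitely generated free factor using the normal form in free products---is the only delicate part; everything else is formal manipulation of elementary embeddings.
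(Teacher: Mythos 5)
Your argument has a genuine gap at its central step. The chain $EC(\G)*C \preceq \G*C \preceq \G*F$ rests on two claims, neither of which is available. First, Theorem~\ref{thm-equiv-infiniterank} asserts only that $\G*C$ is elementarily \emph{equivalent} to $(\G*C)*F'$, not that the inclusion is an elementary \emph{embedding}, so you cannot conclude $\G*C \preceq \G*F$ from it. Second, and more seriously, the ``free-product-of-elementary-embeddings principle'' ($H\preceq K$ implies $H*C\preceq K*C$) that you use to get $EC(\G)*C\preceq \G*C$ is not a formal consequence of anything cited: it is false for arbitrary groups, it is not what \cite[Theorem 7.2]{sel-free-product} is used for in this paper (there it only upgrades elementary equivalence from finite-rank to infinite-rank free factors), and in the case at hand ($H=EC(\G)$, $K=\G$) it is essentially the statement of the lemma itself. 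The elementary-core machinery only yields $EC(\G*C)=EC(\G)\preceq \G*C$; it says nothing about the subgroup $EC(\G)*C$. So the heart of the lemma --- why adjoining the free factor $C$ on both sides preserves elementarity --- is exactly what your proposal leaves unproved.

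The paper closes this gap by induction on the rank of $C$: having $EC(\G)*\langle\bar c_n\rangle\preceq \G*F$, it takes a formula $\phi(\bar w(\bar c_n;c_{n+1}))$ true in $EC(\G)*C$, observes via \cite[Lemma 8.10]{OHV} that $\phi(\bar w(\bar c_n;y))$ is \emph{generic} over the elementary subgroup $EC(\G)*\langle\bar c_n\rangle$ (because $c_{n+1}$ realizes the generic type there), so that finitely many translates by elements $g_i$ of $EC(\G)*\langle\bar c_n\rangle$ cover $\G*F$, and then uses an automorphism of $\G*F$ fixing $EC(\G)*\langle\bar c_n\rangle$ pointwise and carrying $c_{n+1}$ to $g_ic_{n+1}$ to conclude that $c_{n+1}$ itself satisfies the formula in $\G*F$. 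Some argument of this kind (stability-theoretic genericity plus homogeneity) is needed; it cannot be replaced by transitivity of $\preceq$. Your reduction of the infinite-rank case to the finite-rank case is in the right spirit, but note that the paper makes it precise by using quantifier elimination down to $\forall\exists$-formulas, which is what allows the existential witnesses to be captured inside a finite-rank free factor already known to be elementary; a bare compactness appeal would be circular, since reflecting the failure of a formula into a finitely generated piece presupposes that the piece is elementary.
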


 \begin{proof} Suppose first that $F$ has a finite rank. The proof is by induction on the rank $n$ of $C$. If $n=0$ then the result is a consequence of the definition of elementary cores (see \cite{sela09}). Suppose that the result holds for free factors of rank $n$ and set $C=\<\bar c_n, c_{n+1}|\>$.  Let $\phi(\bar x)$ be a formula (without parameters) and $\bar g \in  EC(\G)*C$ such that $\phi(\bar g)$ holds in $EC(\G)*C$.  Then there exists a tuple of words $\bar w(\bar x_n; y)$ such that $\bar g=\bar w(\bar c_n;c_{n+1})$.  By induction $EC(\G)*\<\bar c_n|\>$ is an elementary subgroup of $EC(\G)*C$ and thus by  \cite[Lemma 8.10]{OHV}, the formula $\varphi(\bar w(\bar c_n;y))$ is generic.  Since by induction $EC(\G)*\<\bar c_n|\>$ is an elementary subgroup of $\G*F$ we conclude that there exists $g_1, \cdots, g_p \in EC(\G)*\<\bar c_n|\>$ such that $\G*F=\cup_i g_i X$ where $X=\varphi(\bar w(\bar c_n; \G*F))$.  Hence $c_{n+1} \in g_i X$ for some $i$. There exists an automorphism of $\G*F$  fixing $EC(\G)*\<\bar c_n|\>$ pointwise and sending $c_{n+1}$ to $g_ic_{n+1}$ and thus $\varphi(\bar w(\bar c_n;c_{n+1}))$ holds in $\G*F$ as required. 
 
Suppose now that $F$ has an infinite rank and $C$ has a finite rank. By quantifier elimination and Theorem \ref{thm-equiv-infiniterank}  it is sufficient to show that for any formula $\varphi$ of the form $\forall \exists$ or $\exists \forall$ with parameters from $EC(\G)*C$ if $\G*F \models \varphi$ then $EC(\G)*C \models \varphi$.

Suppose that $\varphi$ is of the form $\forall x \exists y \varphi_0(x;y)$ where $x$ and $y$ are finite tuples and $\varphi_0$ is quantifier-free. If   $\G*F \models \varphi$ then for any $g \in EC(\G)*C$ there exists a free factor $C'$ of $F$ of finite rank such that $C \leq C'$ and a tuple $ a \in \G *C'$ such that $\G*C' \models \varphi_0(g; a)$. By the previous case, $EC(\G)*C$ is an elementary subgroup of $\G*C'$ and thus we get $EC(\G)*C \models \exists y \varphi_0(g; y)$.  Hence we conclude that $EC(\G) *C \models \varphi$.  The case $\varphi$ is of the form $\exists \forall$ can be treated in a similar way.  Finally, the case $F$ and $C$ both have infinite rank follows from the previous cases. \end{proof}

\begin{corollary}\label{l-independence} Suppose  $H=\Gamma*F$ where $\G$ is a torsion-free hyperbolic group and $F$ a free group of finite rank. Then $EC(\G)$ and $F$ are (in $H$) independent over the emptyset.
\end{corollary}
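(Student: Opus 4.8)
The plan is to deduce independence from the elementary-substructure statement of Lemma~\ref{lem-elem} together with Sela's geometric elimination of imaginaries relative to $\mathcal E$ (Corollary~\ref{cor-sela-ima}) and the characterization of algebraic closure for subgroups. First I would observe that by Remark~\ref{lem-link} and stationarity of types over algebraically closed sets, to prove $EC(\G)\Ind_\emptyset F$ in $H=\G*F$ it suffices to show $\acleq(EC(\G))\cap\acleq(F)=\acleq(\emptyset)$, and by Remark~\ref{lem-link} again this can be checked on the sorts in $M_{\mathcal E}$; equivalently, since these sorts are (interalgebraic with) conjugacy classes and cosets of centralizers, it is enough to understand which such imaginaries lie in both $\acleq(EC(\G))$ and $\acleq(F)$.

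Next I would use Lemma~\ref{lem-elem}: taking $C$ to be a free factor of $F$, the lemma gives $EC(\G)*C\preceq \G*F=H$. In particular, with $C$ trivial we get $EC(\G)\preceq H$, and with $C=F$ we get that $F$ sits inside $H$ in a controlled way; more importantly, for \emph{any} free factor decomposition $F=C*C'$ we have $EC(\G)*C\preceq H$. The point is that $H$ is elementarily equivalent to a free product in which $EC(\G)$ and $F$ occupy ``orthogonal'' free factors, so that no nontrivial imaginary built from one side can be algebraic over the other. Concretely, I would argue that if $e\in\acleq(EC(\G))\cap\acleq(F)$, then $e\in\acleq(EC(\G))\cap\acleq(C)$ for some finitely generated free factor $C\leq F$ (using Theorem~\ref{thm-finite-g-acl}, finite generation of real algebraic closures, to reduce to finitely generated data on the $F$ side, and compactness), and then inside the elementary submodel $EC(\G)*C\preceq H$ one computes: the algebraic closure of $EC(\G)$ is $EC(\G)$ itself and the algebraic closure of $C$ is $C$ itself (by Proposition~\ref{prop-spli-acl} applied to the free product splitting $EC(\G)*C$, whose vertex groups are $EC(\G)$ and $C$, both being nontrivial vertex groups of a malnormal cyclic — indeed free — splitting), while $EC(\G)\cap C=\{1\}$. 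Lifting this to imaginaries via Corollary~\ref{cor-sela-ima} then forces $e\in\acleq(\emptyset)$.

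I would then assemble these pieces: pick an arbitrary $e\in\acleq(EC(\G))\cap\acleq(F)$ in one of the sorts $S_E$ for $E\in\mathcal E$. Write $f_E^{-1}$-wise, $e$ is interalgebraic with a conjugacy class or a centralizer-coset of some tuple; that tuple's algebraicity over $EC(\G)$ and over $F$ descends to real tuples, which by finiteness live in $EC(\G)$ and in a finitely generated free factor $C\leq F$ respectively. Working inside $EC(\G)*C\preceq H$, apply Proposition~\ref{prop-spli-acl} to the two vertex groups to see that each real algebraic closure is the vertex group itself, hence the shared real data lies in $EC(\G)\cap C=\{1\}$, whence $e\in\acleq(\emptyset)$. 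Since this holds for all basic sorts, Remark~\ref{lem-link} gives $\acleq(EC(\G))\cap\acleq(F)=\acleq(\emptyset)$, and by symmetry and stationarity, $EC(\G)$ and $F$ are independent over $\emptyset$ in $H$.

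The main obstacle I anticipate is the reduction step from ``algebraic over $F$'' to ``algebraic over a finitely generated free factor $C$ of $F$ that is simultaneously a free complement compatible with the splitting $EC(\G)*C\preceq H$'': one must check that the finitely many realizations of the algebraicity formula, which a priori involve infinitely many generators of $F$, can be captured by a single finitely generated free factor, and that this free factor can be chosen as an honest free factor of $F$ so that Lemma~\ref{lem-elem} applies to the decomposition $F=C*C'$. This is where finite generation of $\acl$ (Theorem~\ref{thm-finite-g-acl}) and a compactness/Löwenheim--Skolem argument inside $H$ are essential; once that is in place, the computation of the real algebraic closures via the malnormal free splitting $EC(\G)*C$ is routine from Proposition~\ref{prop-spli-acl}.
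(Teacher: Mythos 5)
Your argument has a fatal gap at the very first step: you reduce the independence $EC(\G)\Ind{} F$ to the equality $\acleq(EC(\G))\cap\acleq(F)=\acleq(\emptyset)$, but this implication is false in general. Trivial intersection of imaginary algebraic closures implies independence only in a one-based theory (disjointness of the $\acleq$'s forces the canonical base of $\tp(\bar a/\acleq(\bar b))$ into $\acleq(\emptyset)$ precisely when that canonical base is already known to lie in $\acleq(\bar a)\cap\acleq(\bar b)$, which is the definition of one-basedness). The theory at hand is proved in this very paper to be $1$-ample, i.e.\ \emph{not} one-based: Definition~\ref{def-ample} exhibits tuples $a_0,a_1$ with $\acleq(a_0)\cap\acleq(a_1)=\acleq(\emptyset)$ and yet $a_1$ forking with $a_0$. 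Stationarity over algebraically closed sets does not repair this; it only gives uniqueness of nonforking extensions, not existence of a nonforking relation between the two given sets. Note also that the paper uses the implication only in the correct direction (independence implies trivial $\acleq$-intersection, as in Corollary~\ref{cor-H*F}), never the converse. So even if your computation that $\acleq(EC(\G))\cap\acleq(F)=\acleq(\emptyset)$ were carried out correctly, it would not yield the corollary.

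The actual proof is of a completely different nature and goes through genericity. After discarding the trivial case $EC(\G)=1$, one combines Lemma~\ref{lem-elem} with \cite[Lemma 8.10]{OHV} to see that a primitive element $e$ of $F$ realizes the unique generic type of $H$ over $EC(\G)$, hence $\indep{e}{}{\bar h}$ for a generating tuple $\bar h$ of $EC(\G)$; applying the same two results again, a whole basis $\bar e$ of $F$ is a sequence of independent realizations of this generic type over $EC(\G)$, giving $\indep{\bar e}{}{\bar h}$ and hence the statement. If you want to salvage your approach you would need to produce, for each formula over $EC(\G)$ satisfied by a basis of $F$, a witness that it does not fork — and that is exactly what the genericity argument supplies and what an $\acleq$-computation cannot.
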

\begin{proof}
If $EC(\G)=1$ the result is clear. So we suppose that $EC(\G)\neq 1$; that is $\G$ is not elementarily equivalent to a free group. Combining Lemma \ref{lem-elem} and  \cite[Lemma 8.10]{OHV},  if $e \in F$ is a primitive element then $tp(e/EC(\G))$ is the unique generic type of $H$ over $EC(\G)$. If $\bar h$ is a generating tuple for $EC(\G)$ we therefore have ${e} \Ind^H{\bar h}$ for
any primitive element $e \in F$. Again combining Lemma \ref{lem-elem} and  \cite[Lemma 8.10]{OHV}  we conclude that  ${\bar e} \Ind^H{\bar h}$ 
for any basis $\bar e$ of $F$. The result now follows.
\end{proof}

In a free group of infinite rank, we obtain the following of indepedent interest:

\begin{lemma}\label{lem-independence}
Suppose that $G$ is a group such that the theory $T_H$ of  $H=G*F$ is simple where $F$ is a free group of infinite rank. Then
\[\indep{G}{}{F}.\]
\end{lemma}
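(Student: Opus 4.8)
The plan is to reduce the assertion, via the standard formal properties of forking in a simple theory, to a single instance for finite tuples, and then to exploit the abundance of automorphisms of the free product $H=G*F$ in order to exhibit an explicit Morley sequence \emph{inside} $H$ that witnesses non‑dividing. Work in a sufficiently saturated $\mathfrak C\succeq H$. By symmetry and finite character of forking, $\indep{G}{}{F}$ follows once we show $\indep{\bar e}{\emptyset}{\bar g}$ for every finite tuple $\bar g$ from $G$ and every finite tuple $\bar e$ from $F$. Any finite tuple from $F$ involves only finitely many members of a fixed basis of $F$, and an arbitrary permutation of that basis extends to an automorphism of $F$ and hence to an automorphism of $H$ fixing $G$ pointwise; so, after applying such a permutation, we may assume $\bar e$ lies in the definable closure of a tuple $\bar e_0=(e_0,\dots,e_{m-1})$ of distinct basis elements. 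It therefore suffices to prove $\indep{\bar e_0}{\emptyset}{\bar g}$, and by simplicity (forking $=$ dividing) together with symmetry this amounts to showing that $\tp(\bar g/\bar e_0)$ does not divide over $\emptyset$.

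To produce the witness I would use that $F$ has infinite rank: the blocks $\bar e_0^{(j)}=(e_{jm},\dots,e_{jm+m-1})$ for $j<\omega$ are pairwise disjoint, and $\bar e_0^{(0)}=\bar e_0$. Since every permutation of the basis $\{e_i\}$ extends to an automorphism of $H$ fixing $G$ pointwise, and every automorphism of $H$ extends to one of $\mathfrak C$, permuting the blocks shows that $(\bar e_0^{(j)})_{j<\omega}$ is a totally indiscernible set over $G$, in particular an indiscernible set over $\emptyset$. The crucial model‑theoretic input is then the standard fact that in a simple theory an infinite indiscernible set over a set $A$ is independent over $\acleq(A)$; combined with the insensitivity of forking to algebraic parameters, and with the indiscernibility over $\emptyset$ already in hand, this makes $(\bar e_0^{(j)})_{j<\omega}$ a Morley sequence over $\emptyset$: it is $\emptyset$‑indiscernible and $\indep{\bar e_0^{(j)}}{\emptyset}{\bar e_0^{(0)}\cdots\bar e_0^{(j-1)}}$ for every $j$.

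To finish, fix any $\phi(\bar x,\bar e_0)\in\tp(\bar g/\bar e_0)$, so $\mathfrak C\models\phi(\bar g,\bar e_0)$. Because the sequence $(\bar e_0^{(j)})_{j<\omega}$ is indiscernible over $G$ and $\bar g$ comes from $G$, each $\bar e_0^{(j)}$ has the same type over $\bar g$ as $\bar e_0$, whence $\mathfrak C\models\phi(\bar g,\bar e_0^{(j)})$ for all $j$; thus $\bar g$ itself realizes $\{\phi(\bar x,\bar e_0^{(j)}):j<\omega\}$, which is therefore consistent. Since $(\bar e_0^{(j)})_{j<\omega}$ is a Morley sequence over $\emptyset$ starting at $\bar e_0$, Kim's Lemma (valid because $T_H$ is simple) yields that $\phi(\bar x,\bar e_0)$ does not divide over $\emptyset$; as $\phi$ was arbitrary, $\tp(\bar g/\bar e_0)$ does not divide, hence does not fork, over $\emptyset$, and reversing the reductions gives $\indep{G}{}{F}$. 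The step I expect to be the main obstacle — and the one place where simplicity, together with the infinite rank of $F$ (which is what makes the indiscernible set available inside $H$), is genuinely used — is the passage from "indiscernible set over $\emptyset$" to "Morley sequence over $\emptyset$"; the rest is bookkeeping with forking (symmetry, finite character, invariance under definable and algebraic closure) and with the automorphisms of the free product.
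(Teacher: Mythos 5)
Your overall strategy --- reduce to finite tuples, exhibit the disjoint blocks of basis elements as an indiscernible family over $G$, argue that it is a Morley sequence, and finish with Kim's Lemma --- is the same skeleton as the paper's proof, and every step except one is sound. The gap sits exactly where you flagged the ``main obstacle,'' and it is fatal as written: the purported standard fact that in a simple theory an infinite indiscernible \emph{set} over $A$ is independent over $\acleq(A)$ is false. Take $T$ to be the (stable, hence simple) theory of a single equivalence relation $E$ with infinitely many infinite classes, and let $(a_j)_{j<\omega}$ enumerate distinct elements of one class. This is a totally indiscernible set over $\emptyset$, and $\acleq(\emptyset)=\operatorname{dcl}^{eq}(\emptyset)$, yet $\tp(a_1/a_0)$ contains $E(x,a_0)$, which divides over $\emptyset$; so the set is not independent over $\acleq(\emptyset)$. (What is true is that an indiscernible set is Morley over the canonical base of its average type, but that base lives in the definable closure of the set itself, not in $\acleq(A)$.) Consequently you have not shown that $(\bar e_0^{(j)})_{j<\omega}$ is a Morley sequence over $\emptyset$, and Kim's Lemma cannot be invoked; trying to prove the independence of the blocks over $\emptyset$ directly would be circular, since it is an instance of the very statement being proved.

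The missing ingredient is not simplicity but genericity: one needs an actual source of non-forking for the basis elements. The paper supplies it via Poizat's observation (as in Pillay's ``Forking in the free group'') that every definable generic subset of $H$ with parameters from $G$ contains all but finitely many elements of any basis of $F$. Combined with the indiscernibility of the basis over $G$ (which you do establish), this shows that the $e_i$ form a Morley sequence over $G$, and together with their indiscernibility over every finite subset $A\subset G$ one deduces that $A$ is independent from $\{e_i\colon i<\omega\}$, which suffices. If you insert this genericity input to justify the Morley-sequence property of your blocks, the remainder of your argument (consistency of $\{\phi(\bar x,\bar e_0^{(j)})\colon j<\omega\}$ witnessed by $\bar g$ itself, then Kim's Lemma) goes through; without some such input, total indiscernibility alone gives nothing.
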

\begin{proof}
Let $\{e_i\colon i<\omega\}$ be part of a basis for $F$. By using Poizat's observation as in \cite{pillay-free},  if $X$ is a definable generic subset of $H$  with parameters from $G$, $X$ contains all but finitely many elements of any basis of $F$. It follows that  the $e_i$ form a Morley
sequence in the sense of $T_H$ over $G$. 
Clearly,  the $e_i$ are indiscernible over any finite subset $A\subset G$.
Therefore (see e.g. \cite[Lemma 7.2.19]{TZ}) \[\indep{A}{}{\{e_i\colon i<\omega\}}\]
and this is enough.
\end{proof}

The following characterization of forking independence over free factors in free groups was  recently proved by
Perin and Sklinos \cite{perin-homo-personal} using \cite[Corollary 2.7]{pillay-free} and Theorem~\ref{thm1}.

\begin{proposition}\rm{\cite{perin-homo-personal}}\label{p-forkingindep}  Let $F$ be a free group of finite rank, $\bar a,\bar b$ be finite tuples from $F$ and $C$ a free factor of $F$. Then \[\indep{\bar a}{C}{\bar b}\]
if and only if 
\[F=A*C*B\mbox{\  with\ \ }\bar a \in A*C\mbox{ and\ \ }\bar b \in C*B.\]
\end{proposition}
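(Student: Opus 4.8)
The plan is to prove the two implications separately. The direction ``$\Leftarrow$'' is essentially the cited result of Pillay: if $F=A*C*B$ with $\bar a\in A*C$ and $\bar b\in C*B$, then $F=(A*C)*_C(C*B)$ exhibits $\bar a$ and $\bar b$ in the two vertex groups of a splitting of $F$ over the free factor $C$, so $\indep{\bar a}{C}{\bar b}$ by \cite[Corollary 2.7]{pillay-free}. One may assume $F$ is nonabelian and that $C\neq F$ (if $C=F$ then $\bar a,\bar b\in\acl(C)$ and $F=1*C*1$ already works); write $F=C*D$.

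For ``$\Rightarrow$'', assume $\indep{\bar a}{C}{\bar b}$; the idea is to realize this independence in ``canonical position'' inside a larger free group and then transport it back into $F$ by homogeneity. Form $\hat F=C*D*D'$ with $D'$ a disjoint isomorphic copy of $D$, let $\bar a'\in C*D'$ be the image of $\bar a$ under the copy isomorphism $C*D\to C*D'$ fixing $C$ pointwise, and let $\rho\in\aut(\hat F)$ be the automorphism fixing $C$ pointwise and interchanging $D$ and $D'$, so that $\rho(\bar a)=\bar a'$; set $\bar b_0:=\rho(\bar b)$. Since $F$ is a nonabelian free factor of $\hat F$ it is an elementary subgroup, so $F\preceq\hat F$ and hence $\indep{\bar a}{C}{\bar b}$ still holds in $\hat F$; applying $\rho$ gives $\indep{\bar a'}{C}{\bar b_0}$. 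On the other hand, viewing $\hat F=D'*C*D$ puts $\bar a'$ and $\bar b$ in canonical position over $C$, so the already-proved direction yields $\indep{\bar a'}{C}{\bar b}$. Thus $\bar b$ and $\bar b_0$ realize the same type over $C$ and are both independent from $\bar a'$ over $C$. When $C$ is nonabelian it is an elementary subgroup of $\hat F$, so types over $C$ are stationary, forcing $\tp^{\hat F}(\bar b/C\bar a')=\tp^{\hat F}(\bar b_0/C\bar a')$ and hence $\tp^{\hat F}(\bar a'\bar b/C)=\tp^{\hat F}(\bar a'\bar b_0/C)=\tp^{\hat F}(\bar a\bar b/C)$. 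By Theorem~\ref{thm1}, applied in the finite-rank nonabelian free group $\hat F$, there is then $\tau\in\aut(\hat F)$ fixing $C$ pointwise with $\tau(\bar a')=\bar a$ and $\tau(\bar b)=\bar b$; applying $\tau$ to $\hat F=D'*C*D$ yields $\hat F=E_1*C*E_2$ with $\bar a\in E_1*C$ and $\bar b\in C*E_2$. (The cases where $C$ is cyclic or trivial need a minor separate argument and are set aside here.)

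It remains to descend this splitting of $\hat F$ to one of $F$. Here $F\le\hat F=E_1*C*E_2$ with $C=F\cap C$, $\bar a\in F\cap(E_1*C)$ and $\bar b\in F\cap(C*E_2)$. Acting on the Bass--Serre tree of the free splitting $\hat F=E_1*C*E_2$, the subgroup $F$ is written by the Kurosh subgroup theorem as a free product in which $F\cap C=C$ occurs as a factor and the remaining factors are free or of the form $F\cap gE_ig^{-1}$; one must regroup these according to which ``side'' of $C$ they lie on, so that $\bar a$ ends up in $A*C$ and $\bar b$ in $C*B$. I expect this descent to be the main obstacle: a subgroup of a free product does not in general inherit the ambient decomposition (for example $\langle xy\rangle\le\langle x\rangle*\langle y\rangle$ is incompatible with that splitting), so one has to use essentially that $C$ is a common free factor lying inside $F$, together with the fact --- which follows from the construction --- that $\bar a$ and $\bar b$ are elliptic in the refined splitting $\hat F=(E_1*C)*_C(C*E_2)$, fixing the two endpoints of the edge carrying $C$.
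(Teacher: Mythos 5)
Your argument is, in substance, the route the paper itself takes: Proposition~\ref{p-forkingindep} is only cited there (to Perin--Sklinos), but the proof of Theorem~\ref{t-indep} reproduces exactly your scheme --- double the complement $D$, place a copy of one of the tuples in ``canonical position'' over $C$, use stationarity over the base (available because a nonabelian free factor is an elementary, hence $\acleq$-closed, subgroup) together with homogeneity (Theorem~\ref{thm1}, resp.\ Proposition~\ref{prop-generichomog}) to produce an automorphism over $C$ carrying the canonical configuration onto $(\bar a,\bar b)$, and then descend to $F$. Running stationarity on the $\bar b$-side via the swap $\rho$ rather than on the $\bar a$-side is only cosmetic. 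The descent you flag as ``the main obstacle'' is in fact the routine part and is what the paper's ``follows from Grushko's theorem'' is hiding: let $F$ act on the Bass--Serre tree of the refined splitting $\hat F=(E_1*C)*_C(C*E_2)$; since a nontrivial element of the free factor $F$ of $\hat F=F*D'$ is conjugated back into $F$ only by elements of $F$, any nontrivial $F\cap C^g$ forces $g\in F$, so the induced graph-of-groups decomposition of $F$ has exactly one edge with group $C$ (joining $F\cap(E_1*C)\ni\bar a$ and $F\cap(C*E_2)\ni\bar b$) and trivial edge groups elsewhere; Kurosh applied to $F\cap(E_i*C)\leq E_i*C$ exhibits $C$ as a free factor of each of these vertex groups, and regrouping the remaining (trivially attached) factors gives $F=A*C*B$ as required. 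So that part of your sketch does close.

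The genuine gap is the case you set aside: $C$ cyclic or trivial. This is not a minor omission. Your only source of stationarity is $C\preceq\hat F$, which needs $C$ nonabelian; for cyclic or trivial $C$ the type $\tp(\bar b/C)$ need not a priori be stationary, and the argument breaks precisely at the step $\tp(\bar b/C\bar a')=\tp(\bar b_0/C\bar a')$. The proposition as stated covers these cases, and the paper genuinely uses the trivial one: Corollary~\ref{cor-propo-inde} deduces that $\overline h_{2n}$ forks with $\overline h_0$ over $\emptyset$ from the nonexistence of a free decomposition $G_n=K*L$ separating them, i.e.\ from the direction ``independence implies decomposition'' over the trivial free factor. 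Closing this requires an actual additional idea (e.g.\ identifying $\acleq(C)$ for small $C$ and arguing stationarity over it, or invoking Pillay's analysis of generics), not just a routine variation, so as written the proof establishes the proposition only for nonabelian $C$.
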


We will need slight extensions of the previous results to free products of a \tfh group with a free group:

\begin{proposition} \emph{(Generic  homogeneity)}\label{prop-generichomog}  Let $H=\G*F$ where $\G$ is a torsion-free hyperbolic group (possibly trivial) and $F$ a  free group of finite rank.   Let $\bar a, \bar b, \bar c$ be finite  tuples from $F$. If $tp_H(\bar a/\bar c)=tp_H(\bar b/\bar c)$ then  there exists an automorphism $f \in Aut_{\bar c }(F)$ such that $f(\bar a)=\bar b$. 

In particular,  \[tp_H(\bar a/\bar c)=tp_H(\bar b/\bar c)\]
if and only if \[tp_F(\bar a/\bar c)=tp_F(\bar b/\bar c)\] if and only if  \[tp_H(\bar a/\bar c\bar h)=tp_H(\bar b/\bar c\bar h)\] for any generating set $\bar h$ of
$\G$. 
\end{proposition}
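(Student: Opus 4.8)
The plan is to first dispose of the ``in particular'' part by cheap manipulations, then reduce the main assertion to the case where $\G$ coincides with its own elementary core, and finally transport an automorphism of $H$ down to $F$ along the canonical retraction $\rho\colon H\to F$ killing $\G$. For the ``in particular'' part: any $f\in\aut_{\bar c}(F)$ with $f(\bar a)=\bar b$ extends, by the identity on $\G$, to an automorphism of $H=\G*F$ fixing $\bar c$ and every generating tuple $\bar h$ of $\G$ pointwise, so it witnesses $\tp_H(\bar a/\bar c\bar h)=\tp_H(\bar b/\bar c\bar h)$ and hence also $\tp_H(\bar a/\bar c)=\tp_H(\bar b/\bar c)$ and $\tp_F(\bar a/\bar c)=\tp_F(\bar b/\bar c)$; conversely $\tp_F(\bar a/\bar c)=\tp_F(\bar b/\bar c)$ produces such an $f$ by Theorem~\ref{thm1} (when $F$ is nonabelian; if $F$ is trivial or infinite cyclic the statement is immediate, using uniqueness of roots in torsion-free groups). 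So everything follows once we show: $\tp_H(\bar a/\bar c)=\tp_H(\bar b/\bar c)$ implies the existence of $f\in\aut_{\bar c}(F)$ with $f(\bar a)=\bar b$.

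\emph{Reduction.} If $\G$ is trivial this is exactly Theorem~\ref{thm1}. If $\G$ is elementarily equivalent to a (necessarily nonabelian) free group, then $\G$, and hence $H=\G*F$, is a hyperbolic tower over $F$, so $F\preceq H$ by Sela's characterisation of elementary subgroups; thus $\tp_F(\bar a/\bar c)=\tp_H(\bar a/\bar c)=\tp_H(\bar b/\bar c)=\tp_F(\bar b/\bar c)$ and we conclude by Theorem~\ref{thm1}. If $\G$ is not elementarily equivalent to a free group, Lemma~\ref{lem-elem} with $C=F$ gives $EC(\G)*F\preceq H$; since $\bar a,\bar b,\bar c\in F$ and the conclusion only mentions $\aut_{\bar c}(F)$, we may replace $H$ by $EC(\G)*F$, i.e.\ assume $\G=EC(\G)$. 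In this case $\G$ is nonabelian and has no infinite cyclic free factor: otherwise $\G=\G_0*L$ with $L$ a nontrivial free group and $\G_0$ nonabelian (if $\G_0$ were abelian or trivial, $\G$ would be free, contradicting $\G\not\equiv$ free), and then $\G$ is a free floor over $\G_0$, so $\G_0\preceq\G$; but $EC(\G)=\G$ is contained in every elementary subgroup of $\G$, forcing $\G\subseteq\G_0\subsetneq\G$, a contradiction.

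\emph{Descent.} Assume $\G=EC(\G)$ is nonabelian with no infinite cyclic free factor and $\tp_H(\bar a/\bar c)=\tp_H(\bar b/\bar c)$. By homogeneity of the torsion-free hyperbolic group $H$ there is $\alpha\in\aut_{\bar c}(H)$ with $\alpha(\bar a)=\bar b$. Write $\G=\G_1*\dots*\G_k$ for the Grushko decomposition, so the $\G_i$ are nontrivial, freely indecomposable and non-cyclic, and together with a basis of $F$ they yield the Grushko decomposition of $H$. Since an automorphism of a free product permutes the non-cyclic Grushko factors up to conjugacy, $\alpha(\G_i)$ is conjugate in $H$ to some $\G_j$; writing $\rho\colon H\to F=H/\langle\langle\G\rangle\rangle$ for the canonical retraction (the identity on $F$, trivial on $\G$) we get $\rho(\alpha(\G_i))=1$ for every $i$, hence $\rho(\alpha(\G))=1$ and $\rho\circ\alpha$ kills $\langle\langle\G\rangle\rangle=\ker\rho$. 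Therefore $\rho\circ\alpha$ factors as $\psi\circ\rho$ for a homomorphism $\psi\colon F\to F$, which is onto because $\rho$ and $\alpha$ are, hence an automorphism of $F$ as $F$ is finitely generated. Finally $\psi(\bar c)=\psi(\rho(\bar c))=\rho(\alpha(\bar c))=\rho(\bar c)=\bar c$ and $\psi(\bar a)=\psi(\rho(\bar a))=\rho(\alpha(\bar a))=\rho(\bar b)=\bar b$, so $f:=\psi$ is as required.

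\emph{Main difficulty.} The delicate point is the first line of the descent step: producing the ambient automorphism $\alpha$ of $H$ (or of a suitable elementary extension of $H$ still of the form ``$\G'*F'$ with $F'$ free'') fixing $\bar c$ and sending $\bar a$ to $\bar b$. If homogeneity of $H=\G*F$ is not directly available, one instead promotes the hypothesis $\tp_H(\bar a/\bar c)=\tp_H(\bar b/\bar c)$ to $\tp_H(\bar a/\bar c\bar h)=\tp_H(\bar b/\bar c\bar h)$ for a generating tuple $\bar h$ of $\G$, using $\indep{\G}{}{F}$ from Corollary~\ref{l-independence} and the fact that non-forking extensions over the elementary submodel $\G$ are stationary --- here one needs that $\acleq(\emptyset)$ adds nothing over a real tuple, which should follow from the analysis of imaginaries in Section~3 --- and then runs the descent verbatim with $\alpha$ additionally fixing $\G$ pointwise, which makes $\alpha(\langle\langle\G\rangle\rangle)=\langle\langle\G\rangle\rangle$ automatic and dispenses with the Grushko bookkeeping altogether.
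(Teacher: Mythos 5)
There is a genuine gap, and it sits exactly at the point you yourself flag as the ``main difficulty.'' Your descent step opens with ``by homogeneity of the torsion-free hyperbolic group $H$ there is $\alpha\in\aut_{\bar c}(H)$ with $\alpha(\bar a)=\bar b$.'' Homogeneity is only available for free groups (Theorem~\ref{thm1}); it is \emph{not} known for a general torsion-free hyperbolic group, and indeed by work of Perin--Sklinos most surface groups are not homogeneous, so one cannot expect $H=\G*F$ to be homogeneous for arbitrary $\G$. The proposition you are proving is precisely a restricted homogeneity statement for $H$, so assuming homogeneity of $H$ begs the question. Your fallback does not repair this: (a) the stationarity you invoke would be of $\tp_H(\bar a/\bar c)$ over the tuple $\bar c$, which is not an elementary submodel, and the paper only ever uses stationarity over sets of the form $\bar h\bar c$ where $EC(\G)*\<\bar c\>$ is an elementary subgroup (Theorem~\ref{t-indep}); establishing it over bare $\bar c$ would require controlling $\acleq(\bar c)$, not just $\acleq(\emptyset)$; and (b) even after promoting to $\tp_H(\bar a/\bar c\bar h)=\tp_H(\bar b/\bar c\bar h)$, converting equality of types into an automorphism of $H$ is again homogeneity of $H$ over parameters, which is not available. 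The Grushko/retraction bookkeeping in your descent is fine \emph{given} $\alpha$, but $\alpha$ is never legitimately produced.

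The paper's proof takes a different and unavoidable route: it works directly with the Perin--Sklinos machinery. Letting $E_1$ be the smallest free factor of $F$ containing $\<\bar a,\bar c\>$, their Proposition~7.1 yields from $\tp_H(\bar a/\bar c)=\tp_H(\bar b/\bar c)$ either an embedding $u\colon E_1\to H$ fixing $\bar c$ with $u(\bar a)=\bar b$, or a noninjective preretraction; the latter is excluded because it would give $F$ a hyperbolic tower structure over a proper subgroup, impossible for a free group. Running the symmetric argument for $E_2$ and applying Grushko, $u$ is an isomorphism $E_1\to E_2$ extending to $F$ and to $H$. If you want to salvage your reduction-plus-descent architecture, you would have to replace the appeal to homogeneity of $H$ by this preretraction argument (or something equivalent); as written, the central step is unsupported.
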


\begin{proof} Let $A$ (resp. $B$) be the subgroup generated by $\bar a$ and  $\bar c$ (resp. $\bar b$ and $\bar c$) and let $E_1$ (resp. $E_2$) be the smallest free factor of $F$ containing $A$ (resp. $B$).   By \cite[Proposition 7.1]{perin-homo}, either there exists an embedding $u : E_1 \rightarrow H$ which fixes $\bar c$ and sends $\bar a$ to $\bar b$, or there exists a noninjective preretraction $r : E_1 \rightarrow H$ with respect to $\Lambda$, the JSJ-decomposition of $E_1$ relative to $A$. If the last case holds then by \cite[Proposition 6.8]{perin-homo}, there exists a noninjective preretraction from $E_1$ to $E_1$ and  by \cite[Proposition 6.7]{perin-homo} we get a subgroup $E'_1\leq E_1$ and a preretraction $r' : E_1 \rightarrow E_1$ such that $(E_1, E'_1, r')$ is a hyperbolic floor. This implies that $F$ has a structure of hyperbolic tower over a proper subgroup contradicting \cite[Proposition 6.5]{perin-homo}.

We conclude that there exists  $u : E_1 \rightarrow H$ which fixes $\bar c$ and sends $\bar a$ to $\bar b$. Symmetrically, there exists $v : E_2 \rightarrow H$ which fixes $\bar c$ and sends $\bar b$ to $\bar a$. Since $E_1$ (resp. $E_2$)  is freely indecomposable relative to $A$ (resp. $B$)  by using Grushko theorem $u : E_1 \rightarrow E_2$ is an isomorphism  sending $\bar a$ to $\bar b$ and fixing $\bar c$  which can be extended to $F$ and also to $H$.   

This shows in particular that $tp_F(\bar a/\bar c)=tp_F(\bar b/\bar c)$. Conversely, if this last property holds then by homogeneity of $F$, there exists an automorphism $f$ of $F$ fixing $\bar c$ and sending $\bar a$ to $\bar b$. Such an automorphism can be extended to an automorphism $\hat f$ of $H$ (fixing $\G$ pointwise) and thus $tp_H(\bar a/\bar c\bar h)=tp_H(\bar b/\bar c\bar h)$ for any generating set $\bar h$ of
$\G$. Then clearly, we also have $tp_H(\bar a/\bar c)=tp_H(\bar b/\bar c)$. \end{proof}

\begin{corollary} Let $H=\G*F$ where $F$ is a free group (of any rank) and $\G$  is a torsion-free hyperbolic group not elementarily equivalent to a free group.  Let $\bar a, \bar b, \bar c$ be finite  tuples from $F$. Then the conclusions of Proposition \ref{prop-generichomog} hold also in this case. 
\end{corollary}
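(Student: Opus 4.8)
The plan is to reduce the statement to Proposition \ref{prop-generichomog} by passing to a suitable elementary substructure of $H$, exploiting Lemma \ref{lem-elem}. We may assume $F$ is nonabelian, the case of rank at most one being routine. First I would choose a nonabelian free factor $C$ of $F$ of finite rank containing $\bar a$, $\bar b$ and $\bar c$: any finite subset of $F$ lies in a finitely generated free factor, and such a factor can be enlarged inside $F$ to a nonabelian one. By Lemma \ref{lem-elem}, $H':=EC(\G)*C\preceq H$. Since $EC(\G)$ is again a torsion-free hyperbolic group (being an elementary subgroup of $\G$ it is torsion-free, and it is hyperbolic by construction) and $C$ is free of finite rank, Proposition \ref{prop-generichomog} applies to $H'$.

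Next I would chain the equivalences. Since $H'\preceq H$ and $\bar a,\bar b,\bar c$ lie in $H'$, we have $tp_H(\bar a/\bar c)=tp_H(\bar b/\bar c)$ if and only if $tp_{H'}(\bar a/\bar c)=tp_{H'}(\bar b/\bar c)$, and by Proposition \ref{prop-generichomog} applied to $H'$ this holds if and only if there is $f\in Aut_{\bar c}(C)$ with $f(\bar a)=\bar b$. Extending such an $f$ by the identity on a complementary free factor of $C$ in $F$ produces $\hat f\in Aut_{\bar c}(F)$ with $\hat f(\bar a)=\bar b$; this is the generic homogeneity assertion, and it gives in particular $tp_F(\bar a/\bar c)=tp_F(\bar b/\bar c)$. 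Extending $\hat f$ further by the identity on $\G$ yields an automorphism of $H$ fixing $\bar c\bar h$ pointwise for any generating set $\bar h$ of $\G$, so $tp_H(\bar a/\bar c\bar h)=tp_H(\bar b/\bar c\bar h)$, which in turn trivially implies $tp_H(\bar a/\bar c)=tp_H(\bar b/\bar c)$. To close the loop with $tp_F$, I would use that the nonabelian finite-rank free factor $C$ is elementarily embedded in $F$ (see e.g. \cite{perin-homo}): thus $tp_F(\bar a/\bar c)=tp_F(\bar b/\bar c)$ gives $tp_C(\bar a/\bar c)=tp_C(\bar b/\bar c)$, whence $tp_{H'}(\bar a/\bar c)=tp_{H'}(\bar b/\bar c)$ by Proposition \ref{prop-generichomog} for $H'$, and finally $tp_H(\bar a/\bar c)=tp_H(\bar b/\bar c)$ since $H'\preceq H$.

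I do not expect a real obstacle: the argument is a transfer along the elementary inclusion $EC(\G)*C\preceq\G*F$ of Lemma \ref{lem-elem}, combined with the routine extension of automorphisms of a free factor to the ambient group (and to $H$, fixing $\G$ pointwise). The only points demanding a little attention are that $C$ must be chosen a \emph{nonabelian} finite-rank free factor containing the given tuples --- so that both Proposition \ref{prop-generichomog} and the elementary embedding $C\preceq F$ are at our disposal --- and the (standard) fact that $EC(\G)$ is again torsion-free hyperbolic, so that Proposition \ref{prop-generichomog} may legitimately be invoked for $H'$.
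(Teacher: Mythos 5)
Your proof is correct and follows essentially the same route as the paper: pass to a finite-rank free factor containing the tuples and transfer the type equality along the elementary embedding supplied by Lemma~\ref{lem-elem}. The one divergence is that you apply Proposition~\ref{prop-generichomog} to $EC(\G)*C$, which requires the (true, but nowhere stated in the paper) fact that $EC(\G)$ is itself torsion-free hyperbolic, whereas the paper transfers $tp_{H}(\bar a/\bar c)=tp_{H}(\bar b/\bar c)$ to $H_n=\G*F_n$ via $EC(\G)*F_n\preceq H$ and $EC(\G)*F_n\preceq H_n$ and then invokes the proposition for $\G*F_n$ with the original $\G$, so that no extra input about $EC(\G)$ is needed.
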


\begin{proof} Let $F_n$ be a free factor of finite rank of $F$ containing the tuples $\bar a, \bar b, \bar c$ and set $H_n=\G*F_n$.  Suppose that $tp_{H}(\bar a/\bar c)=tp_{H}(\bar b/\bar c)$.  By Lemma \ref{lem-elem}, $EC(\G) *F_n \preceq H_n$ and $EC(\G)*F_n \preceq H$ and hence we get $tp_{H_n}(\bar a/\bar c)=tp_{H_n}(\bar b/\bar c)$.  By Proposition \ref{prop-generichomog} there exists an automorphism $f \in Aut_{\bar c }(F_n)$ such that $f(\bar a)=\bar b$ which can be easily extended to $F$. 
\end{proof}

\begin{theorem}\label{t-indep} Let $H=\G*F$ where $F$ is a free group and  $\G$ is a torsion-free hyperbolic group not elementarily equivalent to a free group. For finite tuples $\bar a,\bar b\in F$ and a free factor  $C$ (possibly trivial)  with finite basis $\bar c$  of $F$ we have
$${\bar a} \Ind^H_{\bar c}{\bar b}$$
if and only if 
$${\bar a}\Ind^F_{\bar c}{\bar b}.$$

\end{theorem}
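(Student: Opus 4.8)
The goal is to transfer the characterization of forking independence over free factors from the free group $F$ to the free product $H=\G*F$. Since I already have Proposition~\ref{p-forkingindep} describing $\Ind^F_{\bar c}$ concretely as the existence of a decomposition $F=A*C*B$ with $\bar a\in A*C$ and $\bar b\in C*B$, it suffices to show that this combinatorial condition is equivalent to $\bar a\Ind^H_{\bar c}\bar b$. The natural bridge is Lemma~\ref{lem-elem}: passing to a free factor $F_n$ of finite rank containing $\bar a,\bar b$ and $\bar c$, I get $EC(\G)*F_n\preceq H$ and $EC(\G)*F_n\preceq \G*F_n=:H_n$, so types over $\bar c\bar a\bar b$ (and hence forking) computed in $H$ agree with those computed in $H_n$. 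Thus I may assume $F$ has finite rank.

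\textbf{Direction 1: the free-group decomposition implies $H$-independence.} Suppose $F=A*C*B$ with $\bar a\in A*C$, $\bar b\in C*B$. Then $H=\G*F = (\G*A)*C*B$, exhibiting $C$ as a free factor of $H$ with $\bar a$ in the free factor $(\G*A)*C$ and $\bar b$ in $C*B$. I would like to invoke a version of Proposition~\ref{p-forkingindep} for $H$ in place of $F$; since $H$ need not be free, I instead argue directly. By Proposition~\ref{prop-generichomog} (generic homogeneity) and homogeneity, and using Corollary~\ref{l-independence} together with the fact that $\bar b$ lies in a free factor $C*B$ over which $\bar a$-side generators are free, one shows $\tp_H(\bar a/C)$ does not fork over $C$ when extended by $\bar b$. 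Concretely: realize $\bar b'$ with $\tp_H(\bar b'/\bar c)=\tp_H(\bar b/\bar c)$ inside $C*B$, then using an automorphism of $H$ fixing $(\G*A)*C$ and the stationarity furnished by the free-product structure, conclude $\tp_H(\bar a/\bar c\bar b)$ is the nonforking extension of $\tp_H(\bar a/\bar c)$.

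\textbf{Direction 2: $H$-independence implies the decomposition.} Conversely, assume $\bar a\Ind^H_{\bar c}\bar b$. Since $EC(\G)*F_n\preceq H$ by Lemma~\ref{lem-elem}, forking in $H$ over $\bar c$ is the same as forking in $H_n=\G*F_n$, and I claim it coincides with forking in $F_n$ over $\bar c$. One inclusion is Direction 1. For the other, note that by Corollary~\ref{l-independence}, $F$ is independent from $EC(\G)$ over $\emptyset$ in $H$; combining this with $\bar c$ being a free factor and transitivity of nonforking, $\bar a\Ind^H_{\bar c}\bar b$ forces $\bar a\Ind^F_{\bar c}\bar b$ — because any $F$-definable relation witnessing forking in $F$ would, after adding the generating tuple $\bar h$ of $\G$ as harmless parameters (over which, by generic homogeneity, $F$-types are unchanged), witness forking in $H$. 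Hence the two notions agree, and Proposition~\ref{p-forkingindep} supplies the decomposition.

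\textbf{Main obstacle.} The delicate point is Direction 1 — showing that the visible free-product structure $H=(\G*A)*C*B$ actually yields \emph{forking} independence in the possibly non-free group $H$, since Proposition~\ref{p-forkingindep} is stated only for free $F$. I expect to handle this via stationarity: Proposition~\ref{prop-generichomog} says types of tuples from $F$ over tuples from $F$ (possibly with $\G$'s generators adjoined) are controlled by automorphisms respecting the free-factor structure, which is exactly the homogeneity needed to run the standard argument that "tuples sitting in complementary free factors over a common free factor are independent." The second subtlety is making precise that adding the generating tuple $\bar h$ of $\G$ does not create new forking between $\bar a$ and $\bar b$; this is where the combination of Lemma~\ref{lem-elem}, Corollary~\ref{l-independence}, and the "in particular" clause of Proposition~\ref{prop-generichomog} does the work, letting me reduce everything to computations inside $F_n$.
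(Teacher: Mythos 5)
Your reduction to a finite rank free factor $F_n$ and your use of Corollary~\ref{l-independence} together with transitivity to pass between the base $\bar c$ and the base $\bar c\bar h$ both match the paper. Your Direction~1 (the visible decomposition $F=A*C*B$ implies ${\bar a}\Ind^H_{\bar c}{\bar b}$) is also essentially the paper's converse direction, though the point you should make explicit is that bases of $A\cap F_n$ and of $B\cap F_n$ are independent realizations of the generic type over $\bar h\bar c$ (Lemma~\ref{lem-elem} together with Lemma~8.10 of \cite{hom-ould}), after which one repeats the argument of Corollary~\ref{l-independence}; an automorphism fixing $(\G*A)*C$ does not by itself yield nonforking.

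The genuine gap is in your Direction~2, the implication ${\bar a}\Ind^H_{\bar c\bar h}{\bar b}\Rightarrow{\bar a}\Ind^F_{\bar c}{\bar b}$. You argue contrapositively that ``any $F$-definable relation witnessing forking in $F$ would, after adding $\bar h$ as harmless parameters, witness forking in $H$.'' But forking is computed in the monster model of a theory, and $F$ is \emph{not} an elementary substructure of $H$ --- that is exactly the hypothesis that $\G$ is not elementarily equivalent to a free group. A formula $\phi(\bar x,\bar b\bar c)$ that divides over $\bar c$ in the sense of $T_{fg}$ need not divide over $\bar c\bar h$ in the sense of $Th(\G)$: the witnessing $\bar c$-indiscernible sequence lives in a model of $T_{fg}$, and the $k$-inconsistency statement $\neg\exists\bar x\bigwedge_i\phi(\bar x,\bar b_i\bar c)$ is a first-order sentence that holds in $F$ but need not hold in $H$. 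Proposition~\ref{prop-generichomog} transfers \emph{equality of types}, not forking of individual formulas, so it cannot close this gap. The paper's argument for this direction is different and is the real content of the theorem: write $F=C*D$, introduce a disjoint copy $D'$ of $D$ inside $H'=\G*C*D*D'$, and let $\bar a'$ be the copy of $\bar a$ rewritten over $D'$. Then $\bar a$ and $\bar a'$ have the same type over $\bar h\bar c$ and both are independent from $\bar b$ over $\bar h\bar c$; since $\G*C\preceq H'$ by Lemma~\ref{lem-elem}, the type $\tp(\bar a/\bar h\bar c)$ is stationary, whence $\tp_{H'}(\bar a/\bar h\bar c\bar b)=\tp_{H'}(\bar a'/\bar h\bar c\bar b)$. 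Proposition~\ref{prop-generichomog} then yields an automorphism of $F*D'$ fixing $\bar b,\bar c$ and carrying $\bar a'$ to $\bar a$; applying it to the visible decomposition $F*D'=D*C*D'$ and invoking Grushko produces the decomposition $F=A*C*B$ required by Proposition~\ref{p-forkingindep}. Without this (or an equivalent) construction your proof does not go through.
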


\begin{proof} Let $\bar h$ be
a generating tuple for $EC(\G)$. 
By  Corollary~\ref{l-independence} or Lemma \ref{lem-independence}  we have ${\bar a\bar c}\Ind^H{}{\bar h}$ and ${\bar a \bar b \bar c}\Ind^H{}{\bar h}$. Using transitivity and monotonicity
of forking in stable theories (see e.g. \cite[Corollary 7.2.17]{TZ}) we have
hence  ${\bar a}\Ind^H_{\bar c}{\bar h}$ and ${\bar a}\Ind^H_{\bar b \bar c}{\bar h}$. 

Therefore, again using transitivity and monotonicity
of forking we have \[{\bar a} \Ind^H_{\bar c}{\bar b}\] 
if and only if \[{\bar a} \Ind^H_{\bar c\bar h}{\bar b}.\]

\noindent
Thus it is sufficient to show that 

$${\bar a} \Ind^H_{\bar c \bar h}{\bar b}$$
if and only if 
$${\bar a}\Ind^F_{\bar c}{\bar b}.$$

 The proof is an adaptation of the argument in \cite{perin-homo-personal} by  using the characterisation of forking independence in $F$ given in Proposition ~\ref{p-forkingindep} and Proposition \ref{prop-generichomog}.  Write $F=C*D$ and let $\{d_i: i <\lambda\}$ be a basis of $D$.

Since $EC(\G)*F \preceq \G*F$ we may work in $EC(\G)*F$ and thus without loss of generality we assume that $\G=EC(\G)$. Suppose that ${\bar a} \Ind^H_{\bar h \bar c}{\bar b}$. Let $D'$ be another copy of $D$ with basis $\{d_i': i <\lambda\}$ and consider $H'=\G *F *D'=\G*C*D*D'$. Let $\bar w(\bar x; \bar y)$ be a tuple of words such that $\bar a=\bar w(\bar d, \bar c)$ and consider $\bar a'=w(\bar d', \bar c)$.  Since $H \preceq H'$, we have ${\bar a} \Ind^{H'}_{\bar h \bar c}{\bar b}$. Then $\bar a \Ind^{H'}_{\bar h \bar c}{\bar b}$ and $\bar a' \Ind^{H'}_{\bar h \bar c}{\bar b}$ by Corollary \ref{l-independence} and Lemma \ref{lem-independence}.  

 Since by Lemma~\ref{lem-elem}, $\G*C$ is an elementary subgroup of $H'$, it is algebraically closed in $H'^{eq}$.  It follows that  for any $\bar a \in F$, $tp(\bar a/\bar h \bar c)$ is stationary; that is if $\bar a', \bar a'' , \bar b \in F$, $tp(\bar a'/\bar h \bar c)=tp(\bar a''/\bar h \bar c)=tp(\bar a/\bar h \bar c)$ and  $a' \Ind^H_{\bar h \bar c}{\bar b}$, $a'' \Ind^H_{\bar h \bar c}{\bar b}$ then $tp(\bar a'/\bar b \bar h \bar c)=tp(\bar a''/\bar b \bar h \bar c)$.

By stationarity we get $tp_{H'}(\bar a /\bar h \bar c, \bar b)=tp_{H'}(\bar a'/\bar h \bar c, \bar b)$ and in particular $tp_{H'}(\bar a /\bar c, \bar b)=tp_{H'}(\bar a'/\bar c, \bar b)$. By Proposition~\ref{prop-generichomog}, there exists an automorphism $f$ of  $F*D'$ fixing $\bar c, \bar b$ pointwise and sending $\bar a'$ to $\bar a$.  We have $F*D'=f(D)*C*f(D')$,  $\bar a \in C*f(D')$ and $\bar b \in f(D)*C$.  The conclusion now follows from Grushko's theorem.

For the converse suppose that $F=A*C*B, \hbox{ such that } \bar a \in A*C,  \bar b \in C*B$. Since $\bar a, \bar b, \bar c$ are finite tuples we have a free factor $F_n$ of finite rank with the property $F_n=(A\cap F_n)*C *(B\cap F_n)$ such that $\bar a \in (A\cap F_n)*C,  \bar b \in C*(B \cap F_n)$.  Set $H'=EC(\G)*F_n$. Since $H' \preceq H$, it is sufficient to show that ${\bar a} \Ind^{H'}_{\bar h \bar c}{\bar b}$.  By Lemma \ref{lem-elem} and \cite[Lemma 8.10]{hom-ould}, if $\{e_1, \cdots, e_n\}$ is a basis of $A \cap F_n$ and $\{e_1', \cdots, e_m'\}$ is a basis of $B \cap F_n$, then they are independent realisations of the generic type over $\bar h \bar c$.  The result follows by the same argument as in the proof of Corollary \ref{l-independence}.  \end{proof}

\section{The imaginary algebraic closure}

In this section we study $\acleq(A)$ with respect to the three basic equivalence  relations conjugacy, left (right) cosets of cyclic groups, and double cosets of cyclic groups. Note that the algebraic closure is independent of the model. We  first prove a proposition  of independent interest:

\begin{proposition}\label{prop-autos} Suppose that $G$ is a torsion-free CSA-group, $A$ a nonabelian  subgroup of $G$. Let  $\Lambda=(\mathcal G(V,E), T, \phi)$ be a malnormal  abelian   splitting of $G$ relative to $A$. If $g\in G$ is hyperbolic with respect to this splitting,
there exist $f_n\in Aut_A(G),n \in \Bbb N$  such that the
$f_n(g), n \in \Bbb N$ are pairwise non-conjugate and   $C(f_n(g)) \neq C(f_m(g))$ for $n \neq m$. 
\end{proposition}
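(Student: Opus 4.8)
The plan is to use the fact that $g$ is hyperbolic with respect to the malnormal abelian splitting $\Lambda$ relative to $A$ in order to manufacture infinitely many automorphisms fixing $A$ pointwise that move $g$ to pairwise non-conjugate (and in fact non-commensurable) elements. The natural source of such automorphisms is the Dehn twists associated to the edges of $\Lambda$. Since $A$ is elliptic in $\Lambda$, every Dehn twist along an edge of $\Lambda$ fixes $A$ (up to composing with an inner automorphism that we can absorb, since $A$ is nonabelian and $G$ is CSA, conjugation by an element centralizing $A$ is trivial), so these twists lie in $\Aut_A(G)$. Concretely I would fix an edge $e$ crossed by the reduced path representing $g$ in the Bass--Serre tree, let $z$ be a generator of (a malnormal cyclic subgroup containing) the edge group $G_e$, and let $\tau_z$ be the corresponding Dehn twist; then set $f_n=\tau_z^{\,n}$.

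The key computation is that the translation length (or syllable length) of $f_n(g)$ in $\Lambda$ grows with $n$. Writing $g$ in normal form as an alternating product of vertex elements and stable letters crossing $e$, applying $\tau_z^n$ inserts a factor $z^{\pm n}$ at each traversal of $e$; malnormality of the edge group guarantees no cancellation across these insertions, so for $n$ large the normal form of $f_n(g)$ genuinely gets longer (its syllable length is a strictly increasing, unbounded function of $|n|$, at least along a subsequence). Two elements that are conjugate in a graph of groups have the same cyclically reduced syllable length, so infinitely many of the $f_n(g)$ must be pairwise non-conjugate; passing to a subsequence we may assume all of them are. For the centralizer statement, note that in a torsion-free CSA-group two elements have the same centralizer iff they commute iff they lie in a common maximal abelian (here cyclic, by malnormality) subgroup; hyperbolic elements of distinct cyclically reduced syllable lengths cannot lie in a common cyclic subgroup, so $C(f_n(g))\neq C(f_m(g))$ for $n\neq m$ follows from the same length bookkeeping, after possibly refining the subsequence once more.

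I expect the main obstacle to be the careful normal-form bookkeeping: one must check that the Dehn twist along $e$ really does fix $A$ pointwise (handling the case where $A$ sits inside a vertex group adjacent to $e$ requires that the twisting element commutes with $A\cap G_v$, which is where malnormality of the edge group and CSA are used), and that no unexpected cancellation in the graph of groups can make the syllable lengths of $f_n(g)$ bounded — this is exactly where malnormality of the splitting is essential. A minor point to dispatch is the possibility that $g$ crosses several edge orbits or that the twisting subgroup is not cyclic but abelian; in the abelian case one picks a nontrivial $z$ in the edge group and the same argument applies, using that abelian subgroups of a CSA-group are contained in unique maximal abelian subgroups.
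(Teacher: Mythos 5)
Your overall strategy---Dehn twists along edges of $\Lambda$ crossed by $g$---is exactly the one used in the paper, and your remarks about why the twists lie in $Aut_A(G)$ are on target. However, the central computation on which your non-conjugacy claim rests is false. A Dehn twist along an edge $e$ twists by an element $z$ of (the centralizer of) the edge group, which lies \emph{inside the adjacent vertex groups}; inserting $z^{\pm n}$ at each traversal of $e$ therefore merges into the neighbouring vertex-group syllables and does \emph{not} increase the syllable length. Concretely, for $G=H*_CK$ with $f_n$ the identity on $H$ and conjugation by $c^n\in C$ on $K$, a normal form $h_1k_1h_2k_2\cdots$ becomes $(h_1c^n)k_1(c^{-n}h_2c^n)k_2\cdots$, which is again a normal form of the \emph{same} cyclically reduced syllable length; similarly, for an HNN extension the twist $t\mapsto c^nt$ preserves the number of $t$-letters. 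So all the $f_n(g)$ have equal translation length on the Bass--Serre tree, and your proposed criterion (``conjugate elements have the same cyclically reduced syllable length, hence infinitely many $f_n(g)$ are pairwise non-conjugate'') distinguishes none of them. The same defect undermines your centralizer argument, which also appeals to distinct syllable lengths.

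The correct route, and the one the paper takes, exploits the \emph{equality} of lengths rather than any growth: since the $f_n(g)$ are cyclically reduced of the same length, the conjugacy theorem for amalgams and HNN extensions shows that if the orbit $\{f_n(g)\}$ were finite up to conjugacy, then for some $n\neq m$ one would have $f_n(g)=f_m(g)^d$ with $d$ in an edge group, after one of finitely many cyclic permutations. One then peels off syllables in this equation, using malnormality of the edge group and torsion-freeness of $G$, to arrive at an identity of the form $c^{k(n-m)}=1$ with $k\neq 0$, a contradiction. Beyond this, your sketch leaves several steps that the paper must handle explicitly and that a length-growth argument cannot supply: the case of trivial edge groups (where one twists by a vertex-group element chosen not to commute with some syllable of $g$); the reduction showing that $g$ is hyperbolic either in a one-edge HNN collapse (edge outside $T$) or else in the amalgam structure of the tree part $L$; and, in the latter case, the extension of the $f_n$ from $L$ to $G$ together with the verification that pairwise non-conjugacy in $L$ persists in $G$.
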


We reduce the proof to  the following basic  configurations.

\begin{lemma} \label{lem-config} Let $G$ be a torsion-free CSA-group and $A$ a nontrivial  subgroup of $G$.  Suppose that one of the following cases holds.  

$(i)$ $G=H*_CK$, $C$ is abelian and malnormal, $A \leq H$, $H$ is nonabelian and $g$ is not in a conjugate of $H$ or $K$. 

$(ii)$ $G=\<H, t| C^t=\varphi(C)\>$, $C$ (and $\varphi(C)$)  is abelian and malnormal in $H$, $A \leq H$ and $g$ is not in a conjugate of $H$.

Then there exists infinitely many automorphisms $f_i \in Aut_A(G)$ whose restriction to $H$ is the identity and to  $K$ a  conjugation such that $f_i(g)$ is not conjugate to $f_j(g)$ and    $C(f_i(g)) \neq C(f_j(g))$ for $i \neq j$.\end{lemma}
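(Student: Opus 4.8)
The plan is to treat cases $(i)$ and $(ii)$ in a uniform way by exploiting the action of $g$ on the Bass--Serre tree $T$ of the given splitting. Since $g$ is hyperbolic, it has an axis in $T$, and its translation length $\ell(g)\geq 1$. In case $(i)$, let $c$ be the stable letter attached to the edge with group $C$; in case $(ii)$, let $t$ be the stable letter. The automorphisms $f_i$ I want are Dehn twists along the edge $C$: pick a nontrivial element $z\in C$, and in case $(ii)$ set $f_i$ to be the automorphism fixing $H$ pointwise and sending $t\mapsto t z^i$; in case $(i)$, set $f_i$ to be the identity on $H$ and conjugation by $z^i$ on $K$. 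In both cases $f_i\in \aut_A(G)$ because $A\leq H$ and $f_i$ fixes $H$ pointwise, and $f_i$ restricts to conjugation on $K$ (trivially in case $(ii)$ where there is no $K$, as requested). The key point is to understand the cyclically reduced form of $f_i(g)$ relative to the splitting and show that the elements $f_i(g)$ are pairwise non-conjugate and have pairwise distinct centralizers.

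To do this I would first replace $g$ by a cyclically reduced conjugate with respect to the graph-of-groups structure, so that its normal form reads $g = h_1 c^{\epsilon_1} h_2 c^{\epsilon_2}\cdots h_k c^{\epsilon_k}$ (suitably interpreted in the HNN case, with the $h_j$ in the appropriate cosets and no pinches), where $k = \ell(g)\geq 1$. Applying $f_i$ replaces each stable letter occurrence by $c z^i$ (or inserts a conjugating $z^i$ across each edge in case $(i)$), and because $C$ is malnormal in the adjacent vertex group(s) one checks that the result is again cyclically reduced of the same syllable length $k$, with the vertex-group syllables modified only by multiplication by powers of $z$. Conjugacy of cyclically reduced elements in a graph of groups is controlled by cyclic permutation of syllables together with conjugation within edge/vertex groups (the conjugacy criterion for graphs of groups, using malnormality of $C$); comparing $f_i(g)$ and $f_j(g)$ under this criterion forces, after accounting for the finitely many cyclic permutations, a relation of the form $z^{(i-j)m}$ lying in a bounded part of a vertex group for some fixed $m\neq 0$ depending on $k$ — which can hold for only finitely many pairs $i,j$. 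Passing to an infinite subsequence (or using that $G$ is torsion-free and $\langle z\rangle$ is infinite) yields infinitely many pairwise non-conjugate $f_i(g)$. For the centralizers: in a torsion-free CSA-group the centralizer of $f_i(g)$ is the unique maximal abelian subgroup containing it, and since $f_i(g)$ is hyperbolic this centralizer is infinite cyclic generated by a ``primitive root''; if $C(f_i(g)) = C(f_j(g))$ then $f_i(g)$ and $f_j(g)$ are both powers of a common element, hence (being of the same length) conjugate or equal up to sign, which we have just excluded for $i\neq j$ outside a finite set. Hence after reindexing we get the desired infinite family.

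The main obstacle I anticipate is the bookkeeping in the conjugacy comparison: one must be careful that applying the Dehn twist does not create cancellation or pinches that shorten the normal form, and this is exactly where malnormality of $C$ in the vertex group(s) is used — it guarantees that $z^i$ cannot be absorbed to produce an edge-pinch. A secondary subtlety is the case $k=\ell(g)=1$ in the HNN setting, where $g = h t$ (or $g=t$) and the cyclic-permutation freedom is trivial; here one argues directly that $f_i(g) = h z^i t$ are pairwise non-conjugate using malnormality of $C$ and $\varphi(C)$. Once these length/cancellation issues are handled, the non-conjugacy and the distinctness of centralizers both follow from the standard structure theory of torsion-free CSA-groups cited earlier (in particular Proposition~\ref{prop-spli-acl} and the CSA property giving unique maximal abelian subgroups). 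Proposition~\ref{prop-autos} then follows from the lemma by localizing the hyperbolic element $g$ in the splitting $\Lambda$: collapsing all but one suitable edge of $\G(V,E)$ on whose orbit $g$ acts hyperbolically produces a one-edge splitting of $G$ relative to $A$ of the form $(i)$ or $(ii)$, with $H$ nonabelian since it contains the nonabelian group $A$, and the $f_i$ produced by the lemma are the required automorphisms in $\aut_A(G)$.
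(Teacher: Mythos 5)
Your overall strategy --- Dehn twists supported on the edge of the splitting, cyclically reduced normal forms, and the conjugacy criterion for graphs of groups --- is the same as the paper's, but there is a genuine gap: you take the twisting element $z$ to be a \emph{nontrivial element of $C$}, and the lemma does not assume $C\neq 1$. The case $C=1$ (a free product $G=H*K$, respectively a free HNN extension $G=H*\langle t\rangle$) is squarely within the scope of the statement and is exactly the case needed in several applications (the proofs of Proposition~\ref{prop-autos} and of Lemma~\ref{lemma-centralizer} explicitly invoke a ``conjugation by a nontrivial element of the vertex group if the edge group is trivial''). When $C=1$ your construction produces nothing. The paper handles this by choosing the twisting element inside the vertex group $H$: in case $(i)$ an element $c\in H$ that fails to commute with some $H$-syllable of the normal form of $g$ --- this is precisely where the hypotheses that $H$ is nonabelian and that $G$ is CSA are used --- and in case $(ii)$ any nontrivial element of $H$. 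The subsequent normal-form analysis is then genuinely different: the twist is no longer ``invisible modulo the edge group,'' and one must rule out absorption by conjugation using torsion-freeness and the non-commutation of $c$ with the syllables of $g$.

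Two further points. First, in case $(ii)$ the map $t\mapsto tz^{i}$ with $z\in C$ need not be an automorphism: preserving the relation $C^{t}=\varphi(C)$ forces a right-hand twisting element to centralize $\varphi(C)$, not $C$; the paper uses $t\mapsto c^{n}t$ with $c\in C_{H}(C)$ instead. Second, your centralizer argument assumes that the centralizer of a hyperbolic element of a torsion-free CSA group is infinite cyclic (``generated by a primitive root''); in a general CSA group maximal abelian subgroups need not be cyclic, and even granting cyclicity, non-conjugacy of $f_i(g)$ and $f_j(g)$ does not exclude $f_i(g)=f_j(g)^{-1}$, so the claimed contradiction does not close. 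The paper avoids this entirely by showing, with the same normal-form computation already in hand, that $[f_n(g),f_m(g)]\neq 1$ for $n\neq m$, which immediately yields $C(f_n(g))\neq C(f_m(g))$ without any structural assumption on centralizers.
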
 

\begin{proof} We treat the case $(i)$.  Write $g$ in normal form $g=g_1 \cdots g_r$, with $r \geq 2$, where we may assume that $g$ is cyclically reduced.   If $C=1$ choose $c \in H$ such that $[c,g_i]\neq 1$ for at least one  $g_i$ appearing in the normal form of $g$ with $g_i \in H$. Such a choice is possible since $H$ is nonabelian and CSA.  If $C \neq 1$ we take $c \in C$ nontrivial.   We define the automorphism $f_n$ by being the   identity on $H$ and  conjugation
by $c^n$ on $K$.  We see that $f_n \in Aut_A(G)$. 

 Suppose towards a contradiction that the orbit $\{f_n(g); n \in \Bbb N\}$ is finite up to conjugacy.  Hence there exists $n_0$ and infinitely many $n$ such that each $f_n$ is conjugate to $f_{n_0}$.

Suppose that $C \neq 1$ and thus $c \in C$. In that case, we see that $f_n(g_1) \cdots f_n(g_r)$ is a normal form of $f_n(g)$ which is moreover cyclically reduced for any $n$. Hence by the conjugacy theorem, there exists $d \in C$ such that $f_n(g)$ is conjugate by $d$ to the  product  of a cyclic permutation of the normal form of $f_{n_0}(g)$. Since the number of that cyclic permutations is finite we conclude that there exist $n \neq m$ such that $f_n(g) = f_m(g)^d$, where $d \in C$. 

Suppose that $g_r \in K$. Then $g_rc^{(n-m)}d^{-1}g_r^{-1}\in C$ and thus we must have $c^{n-m}d^{-1}=1$ and $g_{r-1}c^{m-n}g_{r-1}^{-1} \in C$. Since $G$ is torsion-free and $C$ is malnormal, we conclude that $g_{r-1} \in C$; a contradiction. 

Suppose that $g_r \in H$. Then $d=1$ and $g_{r-1}c^{n-m}g_{r-1} ^{-1}\in C$ and the conclusion follows as in the previous case. Hence the set of orbits of $\{f_n(g), n \in \Bbb N\}$ is infinite up to conjugacy. Using a similar argument, we see also that $C(f_n(g)) \neq C(f_m(g))$ for $n \neq m$.

Suppose that $C=1$.  We treat the case $g_r \in L_2$, the other case can be treated in a similar way.  Then we see that $c^nf_n(g)c^{-n}$ is a cyclically reduced conjugate of $f_n(g)$. Proceeding as above, we conclude that there exists $n \neq m$ such that $c^nf_n(g)c^{-n}=c^{m}f_m(g)c^{-m}$. Then using normal forms, we get that $[g_i, c]=1$ for any $g_i$ appearing in the normal form of $g$ with $g_i \in H$; a contradiction. Using a similar argument, we see also that $C(f_n(g)) \neq C(f_m(g))$ for $n \neq m$.

We treat now the case $(ii)$.  Let $c \in G$ be a nontrivial element of $C_{H}(C)$ (if $C=1$, since $A$ is nontrivial, then we take $c$ to be any nontrivial element of $H$) and define for $n \geq 1$, $f_n$     by being the  identity on $H$ and sending $t$ to $c^n t$.  Then $f_n \in Aut_A(G)$. 

Now   $g$ can be written in a normal form $g_0 t^{\epsilon_0} g_1 \cdots g_r t^{\epsilon_r} g_{r+1}$, where $\epsilon_i =\pm 1$.   Since any element is conjugate to a cyclically reduced element, we may assume that $g$ is cyclically reduced and we may take $g_{r+1}=1$.  We have $f_n(g)=g_0 f_n(t^{\epsilon_0})g_1 \cdots g_r f_n(t^{\epsilon_r})$,  $f_n(t^{\epsilon_i})=c^{\epsilon_in} t^{\epsilon_i}$ if $\epsilon_i=1$ and $f_n(t^{\epsilon_i})=\varphi(c)^{\epsilon_i n}t^{\epsilon_i}$ if $\epsilon_i=-1$. Therefore  $f_n(t^{\epsilon_i})g_{i+1} f_n(t^{\epsilon_{i+1}})=c^{\epsilon_i n}t^{\epsilon_i}g_{i+1}\varphi(c)^{\epsilon_{i+1} n}t^{\epsilon_{i+1}}$ if $\epsilon_i=1$ and $\epsilon_{i+1}=-1$. Similarly we have  $f_n(t^{\epsilon_i})g_{i+1} f_n(t^{\epsilon_{i+1}})=\varphi(c)^{\epsilon_i n}t^{\epsilon_i}g_{i+1}c^{\epsilon_{i+1} n}t^{\epsilon_{i+1}}$ if $\epsilon_i=-1$ and $\epsilon_{i+1}=1$. 

If $C \neq 1$ then $g_{i+1}c^{\epsilon_{i+1} n} \not \in C$ and $g_{i+1}\varphi(c)^{\epsilon_{i+1} n} \not \in \varphi(C)$ and thus replacing each $f_n(t^{\epsilon_i})$ by its value we obtain a normal form of $f_n(g)$ which is moreover cyclically reduced.

If $C=1$ then, since $G$ is torsion-free, for all but infinitely many $n$, $ g_{i+1}c^{\epsilon_{i+1} n} \neq 1$ and $g_{i+1}\varphi(c)^{\epsilon_{i+1} n} \neq 1$ and thus replacing each $f_n(t^{\epsilon_i})$ by its value we obtain  as above a normal form of $f_n(g)$ which is moreover cyclically reduced. 

Suppose that the set $\{f_n(g)| n \in \mathbb N\}$ is finite up to conjugacy.  Hence there exist $n_0$ and infinitely many $n$ such that $f_n(g)$ is cyclically reduced and is conjugate to $f_{n_0}(g)$. By the conjugacy theorem, there exists $d \in C \cup \varphi(C)$ such that $f_n(g)$ is conjugate by $d$ to the  product  of a cyclic permutation of the normal form of $f_{n_0}(g)$. Since the number of that cyclic permutations is finite we conclude that there exist $n \neq m$ such that $f_n(g) = f_m(g)^d$, where $d \in C \cup \varphi(C)$.

Since $G$ is a CSA group, either $C$ and $\varphi(C)$ are conjugate in $H$ and in this case  we may assume that $C=\varphi(C)$ and thus $G$ is an extension of a centralizer;  or $C$ and $\varphi(C)$ are conjugately separated, that is $C^h \cap \varphi(C)=1$ for any $h \in H$.  

Suppose first that $C=\varphi(C)\neq 1$.  Since $f_n(g)(f_m(g)^{d})^{-1}=1$, using normal forms we have 
$$
g_r c^{\epsilon_r n}t^{\epsilon_r}d^{-1}t^{-\epsilon_r}c^{-\epsilon_r m}g_r^{-1} \in C
$$
and hence
$$
g_r c^{\epsilon_r (n-m)}d^{-1}g_r^{-1} \in C.
$$

By induction on $0 \leq l \leq r$, we get 
$$
g_l \cdots g_r c^{(\epsilon_l +\cdots + \epsilon_r)(n-m)} d^{-1}g_r^{-1} \cdots g_l^{-1} \in C
$$
and thus, we conclude that
$$
c^{(\epsilon_0+\cdots+\epsilon_r)(n-m)}d^{-1}=d^{-1},
$$
and we get  $c^{(\epsilon_0+\cdots+\epsilon_{r})(n-m)}=1$.   

Suppose that there is no indice $l$ such that $c^{(\epsilon_l +\cdots + \epsilon_r)(n-m)} d^{-1}=1$.  Then it follows from above that $g_i \in C$ for any $i$ and thus $g=g_0 \cdots g_r t^{\epsilon_0+\dots+\epsilon_r}$.  Hence $(\epsilon_0+\cdots+\epsilon_r)\neq 0$ and since $c^{(\epsilon_0+\cdots+\epsilon_{r})(n-m)}=1$ and $G$ is torsion-free we get a contradiction.

Suppose that there is some indice $l$ such that $c^{(\epsilon_l +\cdots + \epsilon_r)(n-m)} d^{-1}=1$.  Then for any $i >l$ or $i<l$ we get $g_i \in C$. Then $g=t^{\epsilon_0+\dots+\epsilon_{l-1}}g_0 \cdots g_rt^{\epsilon_{l}+\dots+\epsilon_{r}}$.  If we suppose that $\epsilon_0+\dots+\epsilon_r=0$ then $g$ will be a conjugate of an element of $H$; a contradiction.  Hence $(\epsilon_0+\cdots+\epsilon_r)\neq 0$ and since $c^{(\epsilon_0+\cdots+\epsilon_{r})(n-m)}=1$ and $G$ is torsion-free we get a contradiction.

Suppose that $C=\varphi(C)=1$. Then $d=1$ and $c^{m-n}=1$; which is a contradiction.  Suppose now that $C_1$ and $C_2$ are conjugately separated.  As in the previous case, we conclude after calculation that $c^{n-m}=1$; which is a contradiction.  

We conclude that the orbit $\{f_n(g)| n \in \mathbb N\}$ is infinite up to conjugacy as required. Using a similar argument with normal forms we get  that $[f_n(g), f_m(g)] \neq 1$  for $ n \neq m$ and thus $C(f_n(g)) \neq C(f_m(g))$ for $n \neq m$. \end{proof}

\smallskip
\noindent {\bf Proof of Proposition \ref{prop-autos}}.

To simplify, identify $G$ with $\pi(\mathcal G(V,E), T)$. 
For an edge $e_i$ outside $T$, let $\mathcal G_i(V,E_i)$ be the graph of groups obtained by deleting $e_i$. Then $G$ is an HNN-extension of the fundamental group $G_i=\pi(\mathcal G_i(V, E_i),T)$ and we can write $G=\<G_i, t| C^t=\varphi(C)\>$ with $A \leq G_i$.

Suppose that for some edge $e_i$ outside $T$ the element $g$ is hyperbolic in the corresponding splitting. Then we conclude by  Lemma \ref{lem-config}.

Now assume that there is no edge in $\mathcal G(V, E)$ outside $T$ such that $g$ is hyperbolic in the corresponding splitting as above. 
Let $L$ be   the fundamental group of the graph of groups $\mathcal G(V, E')$ obtained by deleting all the edges outside the maximal subtree $T$. Since $g$ 
is not hyperbolic in any splitting $G=\<G_i, t| C^t=\varphi(C)\>$, we may 
assume   that $g$ is in  $L$ and  $g$ is hyperbolic in $L$. Thus  we can write $L=L_1*_CL_2$ with $g$ is hyperbolic.  We may suppose without loss of generality that $A \leq L_1$.  By Lemma \ref{lem-config}, there exists infinitely many automorphisms $f_i \in Aut_A(L)$ whose restriction to $L_1$ is the identity and to  $L_2$ a  conjugation such that $f_i(g)$ is not conjugate to $f_j(g)$ and    $C(f_i(g)) \neq C(f_j(g))$ for $i \neq j$.

Since each $f_n$ sends each boundary subgroup to a conjugate of itself, $f_n$ has a natural extension $\hat f_n$  to $G$.  If we suppose that $\{\hat f_n(g); n \in \Bbb N\}$ is finite up to conjugacy (in $G$), then for infinitely many $n$, $f_n(g)$ is conjugate to an element of a vertex group; which is a contradiction. Hence $\{\hat f_n(g); n \in \Bbb N\}$ is infinite up to conjugacy (in $G$) and we see also that $C(f_n(g)) \neq C(f_m(g))$ for $n \neq m$ (in $G$). This ends the proof of the proposition.  \qed

\begin{definition} Let $G$ be a group and $A$ a subgroup of $G$, $c \in G$. We say that $c$ is \textit{malnormaly universally elliptic relative to $A$}  if $c$ is elliptic in any malnormal abelian  splitting of $G$ relative to $A$. 
\end{definition}

\begin{corollary}\label{cor-elliptic} Let $G$ be a torsion-free CSA-group  and $A$ a nonabelian subgroup of $G$.  If $c^G$ or $C(c)$ is in $\acleq(A)$, then $c$ is malnormaly universally elliptic relative to $A$. 
\end{corollary}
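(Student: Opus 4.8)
The plan is to prove the contrapositive. Suppose $c$ is \emph{not} malnormally universally elliptic relative to $A$; that is, there is a malnormal abelian splitting $\Lambda$ of $G$ relative to $A$ in which $c$ is hyperbolic. I want to conclude that neither $c^G$ nor $C(c)$ lies in $\acleq(A)$. The key tool is Proposition \ref{prop-autos}, which hands us automorphisms $f_n \in \aut_A(G)$, $n\in\mathbb N$, such that the elements $f_n(c)$ are pairwise non-conjugate and have pairwise distinct centralizers $C(f_n(c))$.

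The main point is then to translate this into a statement about $\acleq(A)$. Each $f_n$ fixes $A$ pointwise, hence induces a permutation of the realizations of $\tp(\cdot/A)$ (working in a sufficiently saturated model, and extending $f_n$ to an automorphism of that model fixing $A$). Consequently, for the $\emptyset$-definable equivalence relation $E_0(x,y):\exists z\, x^z=y$ of conjugacy, the imaginary element $c^G=c_{E_0}$ has infinitely many distinct conjugates under $\aut_A(G)$ — namely the $f_n(c)^G$, which are pairwise distinct because the $f_n(c)$ are pairwise non-conjugate in $G$ (and conjugacy in a saturated elementary extension agrees with the first-order definable notion). An element of $\acleq(A)$ must have a finite orbit under all automorphisms fixing $A$, so $c^G\notin\acleq(A)$. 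The identical argument applies to the centralizer $C(c)$, viewed as the imaginary $c_E$ for the $\emptyset$-definable relation $E(x,y): C(x)=C(y)$ (equivalently $[x,y]=1$ in a CSA-group on nontrivial elements): the elements $C(f_n(c))$ are pairwise distinct by Proposition \ref{prop-autos}, so the orbit of $C(c)$ under $\aut_A(G)$ is infinite, whence $C(c)\notin\acleq(A)$.

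The step I expect to require the most care is the passage from "$G$ has automorphisms fixing $A$ with the stated properties" to "the imaginary has an infinite orbit over $A$ in a saturated model of $\mathrm{Th}(G)$": one must check that the automorphisms $f_n$ of $G$ extend to automorphisms of a monster model fixing $A$ pointwise, and that the resulting imaginaries remain pairwise distinct there. This is routine once one notes that distinctness of conjugacy classes and of centralizers is expressed by first-order formulas over the relevant parameters, so it is preserved in elementary extensions; $G$ embeds elementarily into the monster, and an automorphism of $G$ fixing $A$ can be amalgamated into an automorphism of the monster fixing $A$ by homogeneity. The rest is the standard fact that $\acleq(A)$ consists exactly of the imaginaries with finite $\aut_A$-orbit.
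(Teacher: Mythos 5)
Your proposal is correct and is exactly the argument the paper intends: the published proof of this corollary is the single line ``This follows from Proposition~\ref{prop-autos}'', and you have filled in the standard contrapositive argument (hyperbolicity yields the automorphisms $f_n\in\aut_A(G)$ of Proposition~\ref{prop-autos}, whose induced action on $G^{eq}$ gives the imaginaries $c^G$ and $C(c)$ infinite orbits over $A$, so neither can be algebraic over $A$). The only remark is that the detour through a monster model is not needed, since an element of $\acleq(A)$ already has finite orbit under $\aut_A(G^{eq})$ in the model $G$ itself; this is precisely how the paper uses the same fact in the proof of Proposition~\ref{prop1}, $(1)\Rightarrow(2)$.
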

\begin{proof}
This follows from Proposition~\ref{prop-autos}.
\end{proof}

We write $\acl^c(\bar a)=\acleq(\bar a)\cap S_{E_0}$, that is $\acl^c(\bar a)$ is the set of conjugacy classes $b^F$ in $\acleq(\bar a)$. For any 
subset $A$ of a group $G$ we also write  $A^c=\{b^G \mid b \in A\}$ for the set of conjugacy classes with representatives in $A$.

In the special case that $G$ is free  we do
have the converse of  Corollary~\ref{cor-elliptic}.
We can formulate the following list of equivalent criteria 
for a conjugacy
class $c^F$ to be contained in the imaginary algebraic closure of a subset
in the free group.

\begin{proposition} \label{prop1} Let $F$ be a  free group of finite rank, $A$ a nonabelian subgroup of $F$  and $c\in F$. The following are equivalent:

$(1)$ $c^F \in \acl^c(A)$. 

$(2)$ There exists finitely many automorphisms $f_1,  \dots,  f_p \in Aut_A(F)$ such that for any $f \in Aut_A(F)$, $f(c)$ is conjugate in $F$ to some $f_i(c)$. 

$(3)$ $c$ is malnormaly universally elliptic  relative to $A$.

$(4)$ In any  generalized cyclic JSJ-decomposition of $F$ relative to $A$,  either $c$ is conjugate to some element of  the elliptic abelian neighborhood of  a rigid vertex group or it is conjugate to an element of a boundary subgroup of a surface type vertex group.

\end{proposition}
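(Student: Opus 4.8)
The plan is to prove the cycle $(1)\Rightarrow(3)\Rightarrow(4)\Rightarrow(2)\Rightarrow(1)$. Two preliminary remarks. First, in a free group every nontrivial abelian subgroup is cyclic, so a \emph{malnormal abelian} splitting of $F$ relative to $A$ is the same thing as a \emph{malnormal cyclic} one. Second, replacing $A$ by a suitable finitely generated subgroup when necessary, we may assume $A$ is finitely generated so that Proposition~\ref{prop-isol} applies and the relevant relative JSJ decompositions are available. The implication $(1)\Rightarrow(3)$ is then immediate from Corollary~\ref{cor-elliptic}, since $F$ is a torsion-free CSA-group.

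For $(3)\Rightarrow(4)$, fix a generalized malnormal cyclic JSJ-decomposition $\Lambda$ of $F$ relative to $A$ and write $F=F_1*G_2$ with $A\le F_1$, $F_1$ freely $A$-indecomposable and $G_2$ the free factor. As $\Lambda$ is itself a malnormal cyclic splitting of $F$ relative to $A$, hypothesis $(3)$ forces $c$ to be elliptic in $\Lambda$, hence conjugate into some vertex group. If $c\neq 1$ were conjugate into $G_2$, refining $\Lambda$ at $G_2$ to the full free decomposition of $G_2$ (realized with trivial, hence malnormal, edge groups) would give a malnormal cyclic splitting of $F$ relative to $A$ in which $c$ is hyperbolic, contradicting $(3)$; so $c$ is not conjugate into $G_2$ (the case $c=1$ being trivial). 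If $c$ is conjugate into a rigid (or abelian, hence rigid) vertex group, then by construction of the malnormalization this group equals the elliptic abelian neighborhood of the corresponding vertex group, and we are in the first case of $(4)$. If $c$ is conjugate into a surface type vertex group $S$, then either it is conjugate into a boundary subgroup --- the second case of $(4)$ --- or $S$ admits a malnormal cyclic splitting in which $c$ is hyperbolic and the boundary subgroups are elliptic (by Lemma~\ref{split-surface} when $2g(S)+b(S)\ge 4$, and by a direct surface splitting in the remaining case of the once-punctured torus); grafting this splitting in at $S$ refines $\Lambda$ to a malnormal cyclic splitting of $F$ relative to $A$ with $c$ hyperbolic, once more contradicting $(3)$.

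For $(4)\Rightarrow(2)$: the rigid and surface type vertices of $\Lambda$ all lie inside $F_1$, so $(4)$ says $c$ is conjugate into $F_1$. By Lemma~\ref{lem-modular}, every $\sigma\in Mod_A(F)$ restricts to a conjugation on rigid vertex groups and on boundary subgroups of surface type vertex groups of $\Lambda$; hence, under $(4)$, $\sigma(c)$ is conjugate to $c$ in $F$ for all $\sigma\in Mod_A(F)$. Since $F_1$ is freely $A$-indecomposable with $A$ nonabelian, $Mod_A(F_1)$ has finite index in $Aut_A(F_1)$ by Theorem~\ref{mod-index}; combined with the structure of $Aut_A(F)$ for the free product $F=F_1*G_2$, whose non-modular generators (partial conjugations, transvections, factor permutations) change $c$ only up to conjugacy, this gives that $\{f(c):f\in Aut_A(F)\}$ meets only finitely many conjugacy classes, with representatives $f_1(c),\dots,f_p(c)$ say; this is $(2)$. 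For $(2)\Rightarrow(1)$, note first that $(2)$ also forces $c$ to be conjugate into $F_1$: otherwise the automorphisms of $F$ fixing $F_1$ pointwise and sending a fixed basis element $e$ of $G_2$ to $ge$ $(g\in F_1)$ would produce infinitely many pairwise non-conjugate images of $c$. Since $F_1\preceq F$ (a free factor of a free group is an elementary subgroup) and $F_1$ is freely $A$-indecomposable with $A$ nonabelian, $\tp^F(c/A)=\tp^{F_1}(c/A)$ is isolated by Proposition~\ref{prop-isol}, say by a formula $\phi(x)$ over $A$. By homogeneity (Theorem~\ref{thm1}) every realization of $\phi$ in $F$ has the form $f(c)$ with $f\in Aut_A(F)$, hence by $(2)$ is conjugate to some $f_i(c)$; so the sentence $\forall x\,(\phi(x)\to\bigvee_i E_0(x,f_i(c)))$ holds in $F$, therefore in a monster model $\M\succeq F$. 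Consequently every realization of $\tp^{\M}(c/A)$ is conjugate to some $f_i(c)$, so the orbit of $c^F$ under $\mathrm{Aut}(\M/A)$ is finite, i.e.\ $c^F\in\acleq(A)$.

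The step I expect to be the main obstacle is $(3)\Rightarrow(4)$: turning the splitting-theoretic hypothesis ``$c$ is elliptic in \emph{every} malnormal cyclic splitting relative to $A$'' into the precise location of $c$ in the canonical decomposition rests on Lemma~\ref{split-surface} for the surface type vertices --- including the need to check that the surfaces occurring are large enough for it to apply, handling the once-punctured torus separately --- together with a careful treatment of the free factor $G_2$. A secondary technical point is the passage from $Aut_A(F_1)$ to $Aut_A(F)$ in $(4)\Rightarrow(2)$ when $F$ fails to be freely $A$-indecomposable, which uses the structure theory of automorphism groups of free products.
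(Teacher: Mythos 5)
Your proof is correct and follows essentially the same route as the paper: $(1)\Rightarrow(3)$ via Corollary~\ref{cor-elliptic}, $(3)\Rightarrow(4)$ via ellipticity in the generalized malnormal JSJ plus Lemma~\ref{split-surface} for surface vertices, $(4)\Rightarrow(2)$ via Theorem~\ref{mod-index} and Lemma~\ref{lem-modular}, and $(2)\Rightarrow(1)$ via Proposition~\ref{prop-isol} and homogeneity, exactly the ingredients the paper uses (the paper merely routes the cycle as $(1)\Rightarrow(2)\Rightarrow(3)$ instead of $(1)\Rightarrow(3)$ directly). The only nitpick is in your reduction of $c$ into $F_1$ under $(2)$: the transvection $e\mapsto ge$ fixes $c$ if the chosen basis element $e$ does not occur in (a conjugate of) $c$, so you should either pick $e$ occurring in $c$ or use the partial conjugations of Lemma~\ref{lem-config} as the paper does.
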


\begin{proof}  $(1) \Rightarrow (2)$. Suppose that there exists infinitely many automorphisms $f_i \in Aut_A(F)$ such that $f_i(c)$ is not conjugate to $f_j(c)$ for $i \neq j$.  Then each $f_i$ has an unique  extension $\hat f_i$ to $F^{eq}$  and we get  that $\hat f_i(c^F) \neq \hat f_j(c^F)$ for $i \neq j$. Hence $c^F \not \in \acleq(A)$.

$(2) \Rightarrow (3)$. This follows from Proposition \ref{prop-autos} or Corollary \ref{cor-elliptic}. 

$(3) \Rightarrow (4)$.   We let $\Delta$ be a malnormal generalized cyclic JSJ-decomposition of $F$ relative to $A$. Hence $c$ is elliptic in $\Delta$. Clearly $c$ is not in a conjugate of the free factor of $\Delta$.  If $c$ is in a conjugate of a rigid vertex group, there is nothing to prove. Otherwise $c$ is in a conjugate of a surface type vertex group and without loss of generality we may assume that it is included.   In this case,  by \cite[Proposition 7.6]{jsj-gl} (or by a general version of Lemma \ref{split-surface} and Proposition \ref{prop-autos}) $c$ is in a conjugate of  a boundary subgroup.

$(4) \Rightarrow (2)$. Write $F=F_1*F_2$ where $F_1$ contains $A$ and  freely $A$-indecomposable. Let $\Delta$ be a malnormal  cyclic JSJ-decomposition of $F$ relative to $A$. Suppose that $c$ is in some conjugate of  a rigid vertex group or it is conjugate to an element of a boundary subgroup of a surface type vertex group in a generalized malnormal cyclic JSJ-decomposition of $F$ relative to $A$. 
W.l.o.g, we can assume that it is contained in  a rigid vertex group or it is contained in  a boundary subgroup of a surface type vertex group. Since $Mod_A(F_1)$ has finite index in $Aut_A(F_1)$ (Theorem \ref{mod-index}), there are $f_1, \dots, f_p \in Aut_A(F)$ such that for any $f \in Aut_A(F_1)$ there exists $\sigma \in Mod_A(F_1)$ such that $f=f_i \circ \sigma$ for some $i$. Since $F_1$ is freely $A$-indecomposable by Grushko theorem for any $f \in Aut_A(F)$,  $f_{|F_1} \in Aut_A(F_1)$.   By Lemma \ref{lem-modular},  any $\sigma \in Mod_A(F_1)$ sends $c$ to a conjugate of itself. Hence $f(c)=f_i(c^\alpha)=f_i(c)^{f_i(\alpha)}$ and thus for any $f \in Aut_A(F)$, $f(c)$ is conjugate to some $f_i(c)$.

$(2) \Rightarrow (1)$. Since $\acl(A)$ is finitely generated (Theorem \ref{thm-finite-g-acl}), we may assume that $A$ is finitely generated. Write $F=F_1*F_2$ where $A \leq F_1$ and $F_1$ is freely $A$-indecomposable. Since $F_1$ is an elementary subgroup of $F$, $c$ is a in a conjugate of $F_1$.  We assume that $c=c'^{g}$ where $c' \in F_1$.

By Proposition~\ref{prop-isol} let $\varphi_0(x)$ be a formula isolating the type of $c'$ over $\bar a$ where $\bar a$ is a finite generating tuple of $A$. Let $\varphi(z)$  be the following formula in the language $L^{eq}$
$$
\varphi(z):=\exists x (\hat \varphi_0(x) \wedge \pi(x)=z),
$$
where $\hat \varphi_0$ is the relativisation of $\varphi_0$ to the real sort of $F$ and $\pi$ is the projection from the real sort of $F$ to the sort of the conjugacy classes.  Then $F^{eq} \models \varphi( c^F)$. We claim that $\varphi$ has finitely many realizations, which shows that $c^F \in \acleq(A)$. 

Let $d^F \in F^{eq}$ such that $F^{eq} \models \varphi(d^F)$. Then there exists $\alpha \in F$ such that $F \models \varphi_0(\alpha)$ and $\alpha^F=d^F$. Since $\varphi_0$ isolates the type of $c'$ over $\bar a$ and $F$ is homogeneous (Theorem \ref{thm1}), we conclude that there exists an automorphism $f \in Aut_A(F)$ such that $f(c')=\alpha$.  Hence $f(c)=\alpha^{f(g)}$ and thus $\alpha$ is conjugate to some $f_i(c)$ and thus $d^F=f_i(c)^F$ as required.  \end{proof}

\begin{remark} \label{rem1}Let $F$ be the free group with basis $\{a,b\}$.  The  following example shows that in the previous proposition we cannot remove the assumption that $A$ be nonabelian.   Let $A$ be the subgroup generated by $a$. We claim that  $\acl^c(A)=A^c$. Indeed, let $H=F*\<c|\>$. Then $F$ is an elementary subgroup of $H$ as well as the subgroup $K$ generated by $\<a,c\>$.  Hence $\acl^c(A) \subseteq H^c \cap K^c=A^c$.  By a result of Neilsen (see for instance \cite[Proposition 5.1]{Lynd-schup}) any automorphism of $F$ sends $[a,b]$ to a conjugate of $[a,b]$ or $[a,b]^{-1}$. Hence we see that the implication $(2) \Rightarrow (1)$ is not true in this case.  However if we suppose that $A$ is abelian but not contained in a cyclic free factor  in Proposition \ref{prop1} then the same proof of $(2) \Rightarrow (1)$ works in this case.

\end{remark}

For later reference we also note the following:

\begin{corollary}\label{cor-H*F} Let $H=\G*F$ where $F$ is a free group and  $\G$ is a \tfh group not elementarily equivalent to a free group. Let $\bar a$ be a finite tuple from $F$ generating a nonabelian subgroup. Then any conjugacy class $g^H\in\acl_H^c(\bar a)$   has
a representative $g'$ either in $\G$ or in $F$. If  $g'\in \G$, then in fact ${g'}^H\in \acl^c_H(1)=\acl^c_\G(1)$ and if $g'\in F$, then ${g'}^F\in\acl^c_F(\bar a)$.
\end{corollary}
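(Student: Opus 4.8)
The idea is to reduce to the free-group case treated in Proposition~\ref{prop1} by passing through the elementary subgroup $EC(\G)*F$, using the fact (Lemma~\ref{lem-elem}) that $EC(\G)*F \preceq \G *F = H$. First, since $\acl_H^c$ does not depend on the model and $EC(\G)*F\preceq H$, we have $\acl_H^c(\bar a)=\acl_{EC(\G)*F}^c(\bar a)$, so without loss of generality we may assume $\G=EC(\G)$. By Theorem~\ref{thm-finite-g-acl} (applied after noting $\bar a$ generates a nonabelian, hence in particular a suitable subgroup) and since $\bar a$ is finite, it is enough to argue inside $H_n=\G*F_n$ for a finite-rank free factor $F_n$ of $F$ containing $\bar a$ together with a representative of $g^H$; again $\G*F_n\preceq H$ by Lemma~\ref{lem-elem}, so $\acl^c$ is unchanged. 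Thus I would assume $F=F_n$ has finite rank.

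Next I would use that in $H=\G*F$ the subgroup $\G$ is a free factor, hence elementary (Lemma~\ref{lem-elem} with $C=1$), so it is algebraically closed in $H^{eq}$; likewise the free-factor decomposition $H=\G*F$ gives the Grushko-type normal form. Given $g^H\in\acl_H^c(\bar a)$, I want a conjugate of $g$ lying entirely in $\G$ or entirely in $F$. Here is where the main work lies: if $g$ is not conjugate into $\G$ and not conjugate into $F$, then in the free-product splitting $H=\G*F$ the element $g$ is hyperbolic. Now $\G*F$ is a torsion-free CSA-group (being hyperbolic, as $\G*F\equiv\G$ by Theorem~\ref{thm-equiv-infiniterank}), the free-product splitting is a malnormal cyclic (indeed trivial-edge) splitting relative to $\langle\bar a\rangle\le F$, and $\langle\bar a\rangle$ is nonabelian. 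Apply Proposition~\ref{prop-autos}: there are $f_n\in Aut_{\langle\bar a\rangle}(H)$ with the $f_n(g)$ pairwise non-conjugate, contradicting $g^H\in\acl_H^c(\bar a)$ (each $f_n$ extends to $H^{eq}$ fixing $\bar a$, so fixes $\acl_H^c(\bar a)$ setwise, which must then be finite). Hence $g$ is conjugate to some $g'$ in $\G$ or in $F$.

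Finally I would identify where the conjugacy class lands. Suppose $g'\in\G$. Using that $F$ is also a free factor of $H$, hence $F\preceq H$, the intersection $\acl_H^c(\bar a)\cap F^c$ makes sense; but more directly, for $g'\in\G$ I would apply $\acl_H^c$-monotonicity together with the fact that any automorphism of $H$ fixing $\bar a$ pointwise can be composed with automorphisms permuting a basis of $F$ while fixing $\G$: this forces ${g'}^H$ to be fixed by all such automorphisms, so ${g'}^H\in\acl_H^c(\emptyset)$, and since $\G\preceq H$ this equals $\acl_\G^c(\emptyset)=\acl_\G^c(1)$. If instead $g'\in F$, then $F\preceq H$ gives $\acl_H^c(\bar a)\cap F^c=\acl_F^c(\bar a)$ (one inclusion is immediate; the other because $F\preceq H$ and $\bar a\subset F$ imply $\acl_F^c(\bar a)\subseteq\acl_H^c(\bar a)$, and an element of $\acl_H^c(\bar a)$ with a representative in $F$ is algebraic over $\bar a$ in the $F^{eq}$-reduct), whence ${g'}^F\in\acl_F^c(\bar a)$. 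I expect the main obstacle to be the bookkeeping in the last paragraph: carefully justifying that a conjugacy class in $H$ with a representative in one free factor is "detected" inside that factor's elementary substructure, i.e. relating $\acl^c$ computed in $H^{eq}$ to $\acl^c$ computed in the factor, which relies on the elementarity statements of Lemma~\ref{lem-elem} and on $\G,F$ both being free factors of $H$.
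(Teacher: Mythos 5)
Your reduction to $EC(\G)*F_n$ and your treatment of the first assertion (ellipticity of $g$ in the free-product splitting via Proposition~\ref{prop-autos}, i.e.\ Corollary~\ref{cor-elliptic}) match the paper's argument. But both halves of the second assertion have genuine gaps. First, your case $g'\in F$ rests on the claim that $F$ is a free factor of $H$, ``hence $F\preceq H$.'' This is false: a free factor need not be elementary, and here it cannot be, since $Th(F)=T_{fg}$ while $Th(H)=Th(\G)$ by Theorem~\ref{thm-equiv-infiniterank} and $\G$ is by hypothesis \emph{not} elementarily equivalent to a free group. So the identification of $\acl^c$ computed in $H^{eq}$ with $\acl^c$ computed in $F^{eq}$ does not come for free. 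The paper instead argues by contraposition inside the finite-rank factor: if ${g'}^{F_n}\notin\acl^c_{F_n}(A)$, then Proposition~\ref{prop1} (the automorphism characterization, which needs $A$ nonabelian) produces infinitely many $f\in Aut_A(F_n)$ with pairwise non-conjugate images of $g'$, and these extend to automorphisms of $H$ fixing $\bar a$, contradicting ${g'}^H\in\acl^c_H(\bar a)$.

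Second, in the case $g'\in\G$ your inference ``${g'}^H$ is fixed by all such automorphisms, so ${g'}^H\in\acl^c_H(\emptyset)$'' is invalid: a finite (even trivial) orbit under automorphisms never by itself places an element in $\acl(\emptyset)$. The paper's own Remark~\ref{rem1} is essentially a counterexample to this pattern of reasoning ($[a,b]^F$ has finite $Aut(F)$-orbit up to conjugacy yet lies outside $\acl^c(\<a\>)$), and Remark~\ref{rem2} shows the question of what lies in $\acl^c_\G(1)$ is delicate (it can be all of $\G^c$ for rigid $\G$). What the paper actually uses here is forking: $EC(\G)$ and $F$ are independent over $\emptyset$ (Corollary~\ref{l-independence} or Lemma~\ref{lem-independence}), hence $\acleq_H(EC(\G))\cap\acleq_H(F)=\acleq_H(1)$, and since ${g'}^H$ lies in both sides of that intersection it lies in $\acl^c_H(1)$; the equality $\acl^c_H(1)=\acl^c_\G(1)$ then comes from $EC(\G)\preceq\G$ and $EC(\G)\preceq H$ (Lemma~\ref{lem-elem}), not from the unproved claim $\G\preceq H$. (A further small slip: $\G*F$ is not hyperbolic when $F$ has infinite rank; CSA-ness should be quoted for free products of torsion-free CSA groups, though your prior reduction to $F_n$ makes this harmless.)
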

\begin{proof} Let $A$ to be the subgroup generated by $\bar a$. The first part follows directly from Corollary \ref{cor-elliptic}.
For the second part, just note that $\acleq_H(EC(\G))\cap\acleq_H(F)=\acleq_H(1)$ since
$EC(\G)$ and $F$ are independent by Corollary \ref{l-independence} or Lemma \ref{lem-independence}. Since $EC(\G) \preceq \G$ and $EC(\G) \preceq \G *F$, we get  $\acl^c_H(1)=\acl^c_\G(1)$.  Let $F_n$ be a free factor of $F$ of finite rank containing $A$. Again since $EC(\G)*F_n \preceq H$ any $g^H \in \acl_H^c(\bar a)$ has a representative $g'$ in $EC(\G)*F_n$.  If $g'\in F$ and if we suppose that  ${g'}^{F_n} \not \in\acl^c_{F_n}(A)$ then by Proposition \ref{prop1}, we get infinitely many $f_n \in Aut_A(F_n)$ such the $f_n(g')$ are pairwise non conjugate and each $f_n$ has a natural extension to $H$; which is a contradiction. 
\end{proof}

\begin{remark} \label{rem2}Note that if $\G$ is elementarily equivalent to a free group then $\acl_{\G}^c(1)=1^c$. Indeed, by elementary equivalence it is sufficient to show this when $\G$ is free. In that case we see that any $g^{\G} \in \acl^c_{\G}(1)$ is also an element of $\acl^c_{\G}(a) \cap \acl^c_{\G}(b)$ where $\{a,b\}$ is a part of a basis and we see that $\acl_{\G}^c(a)=\<a\>^c$; which gives the required conclusion. However it may be happen that $\acl_{\G}^c(1)=\G^c$ when $\G$ is not elementarily equivalent to a free group. Indeed let $\G$ be a rigid torsion-free hyperbolic group.  Then  $Out(\G)$ is finite by Paulin's theorem \cite{paulin} and $\G$ is homogeneous and prime by \cite{hom-ould}.  The same method as in the proof of Proposition \ref{prop1} $(2) \Rightarrow (1)$, shows that  for any $g \in \G$, $g^{\G} \in \acl_{\G}^c(1)$.

\end{remark}

 For the equivalence relations $E_{i,m},i=1,2$,  $m \geq 1$ and $E_{3,p,q}$, $p,q \geq1$ given in Theorem~\ref{sela-ima}, we denote the corresponding equivalence classes by $[x]_{i,m},i=1,2$, $[x]_{3,p,q}$ respectively. We start with the following lemma:

\begin{lemma}\label{lemma-centralizer} Let $H=\G*F$ where $\G$ is \tfh (possibly trivial)  and $F$ is a  free group. For a finite tuple $\bar a$ from  $F$  such that $\acl_H(\bar a)=\acl_F(\bar a)$
 and  $c\in H$ the following properties are equivalent: 

$(1)$ $C(c) \in \acleq_H(\bar a)$.

$(2)$ $c\in \acl_H(\bar a)$.
\end{lemma}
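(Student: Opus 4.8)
The direction $(2)\Rightarrow(1)$ is immediate: viewing $C(c)$ as the class of $c$ under the $\emptyset$-definable equivalence relation $x\sim y\iff[x,y]=1$ --- which is coded in $H_{\mathcal E}$ by Corollary~\ref{cor-sela-ima} --- we have $C(c)\in\mathrm{dcl}^{eq}_H(c)$, and since $c\in\acl_H(\bar a)\subseteq\acleq_H(\bar a)$ this gives $C(c)\in\acleq_H(\bar a)$. For $(1)\Rightarrow(2)$ put $A=\langle\bar a\rangle$; we may assume $c\neq1$, and by a standard reduction (Lemma~\ref{lem-elem} and Theorem~\ref{thm-finite-g-acl}) that $H$ is finitely generated, hence torsion-free hyperbolic. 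I will treat the case where $A$ is nonabelian, which is the one needed in the sequel (the cyclic case requires only minor, more elementary, modifications, using the variant of Proposition~\ref{prop1} noted there). Write $F=F_1*F_2$ with $A\le F_1$ and $F_1$ freely $A$-indecomposable. The first step is to locate $c$: since $H$ is torsion-free CSA and $A$ is nonabelian, Corollary~\ref{cor-elliptic} applied to $C(c)\in\acleq_H(\bar a)$ shows that $c$ is malnormally universally elliptic relative to $A$. Testing ellipticity against the free decomposition $H=(\G*F_1)*F_2$, against free splittings of $F_2$, and against the Nielsen-transformed free decompositions $H=\langle fe\rangle*(\text{rest})$ with $e$ a basis element of $F_2$ and $f\in F_1\setminus\{1\}$, one concludes that $c$ is conjugate into $\G$ or into $F_1$.

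Next I would dispose of the cases in which $c$ is conjugate into $\G$, or is conjugate into $F_1$ without actually lying in $F\le H$. In both cases I claim $C(c)\notin\acleq_H(\bar a)$, contradicting $(1)$. For $h\in\G$ let $\psi_h\in\aut(H)$ be the automorphism fixing $F$ pointwise (hence $\bar a$) and acting on $\G$ as conjugation by $h$. Writing $c=c_0^{\,g}$ with $c_0$ in the relevant factor and computing $C(\psi_h(c))$ via Bass--Serre normal forms --- using that $\psi_h$ conjugates the $\G$-syllables of $g$ by $h$, that maximal abelian subgroups of $\G$ are malnormal, and that they have infinite index in the nonabelian hyperbolic group $\G$ --- one sees that $h\mapsto C(\psi_h(c))$ has fibres contained in cosets of a fixed infinite-index subgroup of $\G$; hence $\{C(\psi_h(c))\colon h\in\G\}$ is infinite. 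Therefore $c\in F$, so in fact $c\in F_1$, and by Theorems~\ref{theorem-acl} and~\ref{thm-finite-g-acl} we have $\acl_H(\bar a)=\acl_F(\bar a)=\acl_{F_1}(A)$.

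The main case is $c\in F_1$. Suppose towards a contradiction that $c\notin\acl_F(\bar a)$. As $F_1$ is freely $A$-indecomposable, $\tp_{F_1}(c/\bar a)$ is isolated (Proposition~\ref{prop-isol}); being non-algebraic, its isolating formula has infinitely many solutions in $F_1$, which by homogeneity (Theorem~\ref{thm1}) form a single $\aut(F_1/\bar a)$-orbit, so there are $\sigma_n\in\aut(F_1/\bar a)$ (each extending to $\aut(H/\bar a)$) with the $\sigma_n(c)$ pairwise distinct. Write $c=r^k$, where $C_{F_1}(c)=\langle r\rangle$ is infinite cyclic because abelian subgroups of the free group $F_1$ are cyclic; then $C_{F_1}(\sigma_n(c))=\langle\sigma_n(r)\rangle$ and $\sigma_n(c)=\sigma_n(r)^k$. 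If infinitely many of the subgroups $\langle\sigma_n(r)\rangle$ are distinct, pass to such a subfamily; then in a monster $\mathfrak C\succeq H$ the centralizers $C_{\mathfrak C}(\sigma_n(c))$ are still pairwise distinct (commutative transitivity in $\mathfrak C$ reduces this to distinctness of the $\langle\sigma_n(r)\rangle$ in $F_1$), so homogeneity of $\mathfrak C$ yields automorphisms over $\bar a$ producing infinitely many images of the imaginary $C(c)$, contradicting $(1)$. Otherwise infinitely many $\sigma_n(c)$ share one fixed centralizer $\langle r_0\rangle$; but each $\sigma_n(r)$, being a root (a non-proper-power) of $F_1$ lying inside the cyclic group $\langle r_0\rangle$, must equal $r_0$ or $r_0^{-1}$, so $\sigma_n(c)\in\{r_0^{\,k},r_0^{-k}\}$, contradicting pairwise distinctness. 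Hence $c\in\acl_F(\bar a)=\acl_H(\bar a)$, as required.

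I expect the main obstacle to be exactly this last step. The tempting shortcut --- "$C(c)$ is interdefinable with the two-element set of generators of $\langle r\rangle$, so $C(c)\in\acleq(\bar a)$ forces $r\in\acl(\bar a)$ and then $c=r^k\in\acl(\bar a)$" --- is \emph{invalid}: "being a power of" is not first-order, and, more seriously, in saturated elementary extensions of $F_1$ the centralizer of a nontrivial element is not infinite cyclic but a large torsion-free abelian group with many automorphisms. The remedy is to carry out the counting in the standard model $F_1$, where centralizers genuinely are cyclic; this is precisely where the isolation of types over $\bar a$ (Proposition~\ref{prop-isol}) and the strong homogeneity of $F$ (Theorem~\ref{thm1}) are essential, and it is also why one cannot avoid the separate explicit $\psi_h$-computation for the cases where $c$ is conjugate into $\G$ or conjugate into but not contained in $F$.
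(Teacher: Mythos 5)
Your treatment of the core case is correct but follows a genuinely different route from the paper's. After locating $c$ (via Corollary~\ref{cor-elliptic} and free splittings) both you and the paper must show that an element of $F_1$ whose centralizer-imaginary is algebraic over $\bar a$ lies in $\acl(\bar a)$. The paper does this by extending the malnormal cyclic JSJ-decomposition of $F_1$ relative to $A$ to a splitting of $H$, identifying the vertex group containing $A$ with $\acl_H(\bar a)$ (Theorem~\ref{theorem-acl}, Proposition~\ref{prop-spli-acl}), and then exhibiting infinitely many distinct centralizers via explicit Dehn twists around the edges adjacent to that vertex, as in Proposition~\ref{prop-autos}. You bypass the JSJ machinery entirely: isolation of types (Proposition~\ref{prop-isol}) plus homogeneity (Theorem~\ref{thm1}) give an infinite $\aut(F_1/\bar a)$-orbit of $c$ inside the standard model, and the dichotomy on the cyclic subgroups $\langle\sigma_n(r)\rangle$ (either infinitely many are distinct, giving infinitely many images of the imaginary $C(c)$, or infinitely many $\sigma_n(c)$ are trapped in the two-element set $\{r_0^{k},r_0^{-k}\}$) yields the contradiction. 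This is shorter, and your warning about why one must count in the standard model rather than in a saturated extension is exactly right.

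There is, however, a genuine gap in your case analysis. Having shown that $c$ is conjugate into $\G$ or into $F_1$, you dispose of ``$c$ conjugate into $\G$'' and ``$c$ conjugate into $F_1$ but not lying in $F$'' by the automorphisms $\psi_h$, and then conclude ``therefore $c\in F$, so in fact $c\in F_1$.'' That inference fails: one can have $c=d^{\alpha}$ with $d\in F_1$ and $\alpha\in F\setminus F_1$ (say $\alpha\in F_2$), so that $c\in F$, $c$ is conjugate into $F_1$, yet $c\notin F_1$. Since $\psi_h$ fixes $F$ pointwise it fixes such a $c$ and gives no information, while your main argument assumes $c\in F_1$; so this case is covered by neither branch. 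The repair is exactly the paper's final step: take automorphisms that are the identity on $F_1$ (hence on $\bar a$) and conjugation by powers of a nontrivial $a\in A$ on the complementary factor; these move $\alpha$, hence move $C(c)=C(d)^{\alpha}$ to infinitely many distinct conjugates, contradicting $(1)$. A second, smaller omission: the lemma is also invoked in Proposition~\ref{prop-imag-acl} with $\langle\bar a\rangle$ abelian, so the cyclic and trivial cases cannot simply be waved away, although your claim that they are elementary is fair --- the paper settles the cyclic case in two lines with the automorphisms $x\mapsto x^{a^n}$.
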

\begin{proof} Clearly, $(2)$ implies $(1)$. Let $A$ to be the subgroup generated by $\bar a$. If $A$ is trivial the result is clear.  To prove $(1)$ implies $(2)$
suppose first that $A$ is abelian; so cyclic and generated by $a$. Let $f_n(x)=x^{a^n}$.  If $c \not \in \acl(A)=C(a)$, then $C(f_n(c))\neq C(f_m(c))$ for $n \neq m$.  Hence $C(c) \notin\acleq_H(A)$. 

Suppose now that $A$ is nonabelian. Let $F_p$ be a free factor of finite rank of $F$ containing $A$ and which is freely $A$-indecomposable and set $F=F_p*D$.  Let $\Delta$ be the  malnormal cyclic JSJ-decomposition of $F_p$ relative to $A$. Extend this to a decomposition $\Delta'$ of $H=(\G*D)*F_p$. Then by Theorem~\ref{theorem-acl} and Proposition~\ref{prop-spli-acl} the vertex group in $\Delta'$ containing $A$ is $\acl_{F_n}(A)=\acl_{F}(A)=\acl_H(A)$ and by Corollary~\ref{cor-elliptic} $c$ is elliptic with respect to $\Delta'$. 

Suppose that $c$ is in a conjugate of $\G*D$ and set $c=c_0^a$ with $c_0 \in \G*D$.  If $a\neq 1$,  let $f_n$ denote the automorphism of $H$ given by conjugation by $a^n$ on $\G*D$ and the identity on $F_p$. Then clearly $c^{a^n}$ and $c^{a^m}$ do not centralize each other for $n\neq m$  showing
that $C(c)\notin\acleq_H(A)$. Hence $a=1$ and $c \in \G*D$. Again let $a\in F_p$ and let $f_n$ denote the automorphism of $H$ given by conjugation by $a^n$ on $\G*D$ and the identity on $F_p$. As above  $c^{a^n}$ and $c^{a^m}$ do not centralize each other for $n\neq m$  showing
that $C(c)\notin\acleq_H(A)$.

Therefore we may assume that $c$ is in a conjugate of $F_p$. Proceeding as above we get $c \in F_p$.  Suppose now that $c$ is not in a conjugate of $\acl(A)$. Write $\Delta= (\mathcal G(V,E), T, \phi)$ and let $L$ to be the fundamental group of the graph of groups obtained by deleting the edges which are outside $T$.  Hence $c$ is in a conjugate of $L$ and without loss of generality we may assume that $c \in L$. Let $e_1, \cdots, e_q$ be the edges adjacent to $acl(A)$ and let $C_i$ be the edge group corresponding to $e_i$. Hence we can write $L=L_{i1}*_{C_i}L_{i2}$ with $\acl(A) \leq L_{i1}$. Since $c$ is elliptic and $c$ is not in a conjugate of $\acl(A)$, we get, without loss of generality  that $c \in L_{i2}$ for some $i$.  If $c \in C_i$ then $c$ is in a conjugate of $\acl(A)$ contrary to our hypothesis. So $c \not \in C_i$. Proceeding as in the proof of Proposition \ref{prop-autos},  we take  infinitely many 
Dehn twists $f_n \in Aut_A(F_n)$ around $C_i$ if $C_i \neq 1$ and a conjugation by a nontrivial  element of  $L_{i1}$ if $C_i=1$ and thus we find  $C(f_n(c)) \neq C(f_m(c))$ for $n \neq m$. Now each $f_n$ has a standard extension $\hat f_n \in Aut_A(H)$ and we see also that $C(\hat f_n(c)) \neq C(\hat f_m(c))$ for $n \neq m$. It follows that $C(c) \not \in \acleq(A)$; a contradiction. 

Hence $c$ is in a conjugate of $\acl(A)$. Suppose that $c =d^\alpha$ with $d \in \acl(A)$ and $\alpha \in F_p\setminus\acl(A)$. Then we find infinitely many automorphisms $f_i \in Aut_A(F_p)$ such that $f_n(\alpha) \neq f_m(\alpha)$ for $n \neq m$. Clearly, these $f_i$ extend to $H$. Hence  $C(f_n(c))=C(f_n(d))^{f_n(\alpha)}\neq C(f_m(c))$ for infinitely many $n,m$  and thus $C(c) \not \in \acleq(A)$; a contradiction.  We conclude that $c \in \acl(A)$. \end{proof}

\begin{proposition}\label{prop-imag-acl}Let $H=\G*F$ where $\G$ is \tfh (possibly trivial)  and $F$ is a  free group. For a finite tuple  $\bar a$ of $F$ such that $\acl_H(\bar a)=\acl_F(\bar a)$ and   $c,d,e\in H$ the following properties are equivalent:

$(1)$ $[(c,d)]_{1,m} \in \acleq_H(\bar a)$. 

$(2)$ $c,d \in \acl_H(\bar a)$. 

$(3)$ $[(c,d)]_{2,m} \in \acleq_H(\bar a)$. 

Similarly we have
 $[(c,d,e)]_{3,p,q} \in \acleq_H(\bar a)$
 if and only if $c,d,e \in \acl_H(\bar a)$.

\end{proposition}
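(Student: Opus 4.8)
The plan is to reduce Proposition~\ref{prop-imag-acl} to the conjugacy and centralizer cases already treated, exploiting the fact that the basic equivalence relations $E_{1,m},E_{2,m},E_{3,p,q}$ all record centralizers together with some coset data. The easy directions are clear: if $c,d\in\acl_H(\bar a)$ then $[(c,d)]_{1,m}$ is fixed (up to finitely much ambiguity) by every $f\in Aut_A(H)$, hence lies in $\acleq_H(\bar a)$, and similarly for $E_{2,m}$ and $E_{3,p,q}$. So the content is in the forward direction.

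First I would handle $(1)\Rightarrow(2)$. Suppose $[(c,d)]_{1,m}\in\acleq_H(\bar a)$. The equivalence class $[(c,d)]_{1,m}$ determines $C(c)$, since $C(c)=C(c')$ is part of the relation $E_{1,m}$; more precisely, the function sending $[(c,d)]_{1,m}$ to $C(c)$ is $\emptyset$-definable in $L_\E$, so $C(c)\in\acleq_H([(c,d)]_{1,m})\subseteq\acleq_H(\bar a)$. By Lemma~\ref{lemma-centralizer} this forces $c\in\acl_H(\bar a)$. Now that $c$ is algebraic, I would argue that $C(d)$ — equivalently $d$ up to a bounded set — is also determined: working modulo $\acl_H(\bar a)\ni c$, the class $[(c,d)]_{1,m}$ together with $c$ pins down $C(d)$ up to the finite ambiguity coming from $yt^m$ with $t\in C(c)$ (note $C(c)$ is cyclic since $c\in F\setminus\{1\}$ or $c\in\G$ with $C(c)$ abelian, hence $C(c)/C(c)^m$ is finite). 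Concretely, if there were infinitely many pairwise non-conjugate-with-distinct-centralizer images $f_n(d)$ under $f_n\in Aut_A(H)$ fixing $c$, then since each $f_n$ fixes $C(c)$ and moves $d$ only up to $yt^m$-equivalence into genuinely distinct centralizers for infinitely many $n$, the classes $[(c,f_n(d))]_{1,m}$ would be infinite, contradicting algebraicity; so $C(d)\in\acleq_H(\bar a)$ and Lemma~\ref{lemma-centralizer} again gives $d\in\acl_H(\bar a)$. The argument for $E_{2,m}$ is identical (left versus right cosets), and the one for $E_{3,p,q}$ adds one more layer: $[(c,d,e)]_{3,p,q}$ determines $C(c)$ and $C(d)$, hence $c,d\in\acl_H(\bar a)$ by the above; then modulo $c,d\in\acl_H(\bar a)$ the double coset $C(c)^p\,e\,C(d)^q$ determines $e$ up to the finite group $C(c)/C(c)^p\times C(d)/C(d)^q$ acting, and the same automorphism-counting argument forces $C(e)\in\acleq_H(\bar a)$, whence $e\in\acl_H(\bar a)$.

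The main obstacle I anticipate is the step where one passes from "$c$ (and for $E_{3}$ also $d$) is algebraic" to "$d$ (resp. $e$) is algebraic" cleanly: one must verify that once the centralizer data of the first coordinate(s) is algebraic, the residual coset/double-coset ambiguity is genuinely finite, and that any automorphism in $Aut_A(H)$ realizing a different class can be assumed to fix the already-algebraic coordinates — this requires invoking the bounded-index structure of $Aut_A$ over $A$-algebraic data, much as in the proof of Lemma~\ref{lemma-centralizer} and Proposition~\ref{prop1}, together with the reduction $\acl_H(\bar a)=\acl_F(\bar a)$ to keep everything inside $F$ up to the free factor $\G$. Once that is in place, the conclusion follows by the same Dehn-twist / conjugation constructions used in Proposition~\ref{prop-autos} and Lemma~\ref{lemma-centralizer}, producing $f_n\in Aut_A(H)$ with $C(f_n(\cdot))$ pairwise distinct whenever the relevant coordinate fails to be algebraic.
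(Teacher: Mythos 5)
Your skeleton agrees with the paper's proof up to the halfway point: the directions $(2)\Rightarrow(1)$ and $(2)\Rightarrow(3)$ are indeed immediate, by symmetry one reduces to $(1)\Rightarrow(2)$, and the first coordinate is handled exactly as you say, since the class $[(c,d)]_{1,m}$ determines the imaginary $C(c)$ and Lemma~\ref{lemma-centralizer} then gives $c\in\acl_H(\bar a)$. The problem is your treatment of the second coordinate. The class $[(c,d)]_{1,m}$ does \emph{not} pin down $C(d)$ ``up to finite ambiguity'': it determines $d$ only up to the right coset $d\,C(c)^m$, where $C(c)^m=\{t^m : t\in C(c)\}$. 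Since $C(c)$ is infinite cyclic, this coset is infinite, and the centralizers $C(dt^m)$ for varying $t\in C(c)$ are in general pairwise distinct. You have conflated the finiteness of the quotient $C(c)/C(c)^m$ with finiteness of the coset itself; the same confusion recurs in your handling of the double coset $C(c)^p e\, C(d)^q$ for $E_{3,p,q}$. As a consequence, the intermediate claim ``$C(d)\in\acleq_H(\bar a)$'' is not established by your argument (it is essentially equivalent to the conclusion you are trying to reach), and your separation criterion is the wrong one: having $C(f_n(d))\neq C(f_{n'}(d))$ neither implies nor is implied by $[(c,f_n(d))]_{1,m}\neq[(c,f_{n'}(d))]_{1,m}$. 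Indeed $(c,d)$ and $(c,dt^m)$ lie in the same class while generically having different second-coordinate centralizers, so ``infinitely many distinct centralizers'' does not yield infinitely many classes.

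What is actually needed, and what the paper does, is to produce automorphisms $f_i\in Aut_A(H)$ (fixing $c$ up to the relevant centralizer data) such that $f_i(d)\notin f_j(d)\,C(c)$ for $i\neq j$; this coset condition, being stronger than separation modulo $C(c)^m$, forces the classes $[(c,f_i(d))]_{1,m}$ to be pairwise distinct. In the case where $A=\langle a\rangle$ is abelian one takes $f_i(x)=x^{a^i}$ and checks directly that $d\notin C(a)$ implies $f_i(d)\notin f_j(d)C(a)$; in the nonabelian case one reuses the Dehn twists and conjugations constructed in the proof of Lemma~\ref{lemma-centralizer}, but one must verify the coset separation for them rather than the centralizer separation. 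So your choice of tools is right, but the verification you propose would not close the argument; you need to replace the detour through $C(d)$ and Lemma~\ref{lemma-centralizer} by a direct computation showing the translates of $d$ land in distinct cosets of $C(c)^m$ (and, for $E_{3,p,q}$, in distinct double cosets).
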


\begin{proof} 

Let $A$ to be the subgroup generated by $\bar a$. The implications $(2) \Rightarrow (1)$ and  $(2) \Rightarrow (3)$ are clear. By symmetry it suffices to prove
$(1) \Rightarrow (2)$. By Lemma~\ref{lemma-centralizer}, we have $c\in \acl_H(A)$.

If $A$ is abelian and generated by $a$, let $f_i(x)=x^{a^i}$ for $i \in \mathbb N$. If $d \not \in \acl_H(A)=C(a)$, then $f_i(d) \not \in f_j(d)C(a)$ for $i \neq j$.  Thus  for $i \neq j$, $(f_i(c), f_i(d))$ is not equivalent to  $(f_j(c), f_j(d))$  relative to the equivalence relation $E_{1,1}$ and also relative to $E_{1,m}$ for all $m \geq 1$.  Hence  $ [c, d]_{1,m} \not \in \acleq(A)$; a contradiction. Therefore $d \in C(a)=\acl(A)$. 

If $A$ is nonabelian and $d  \in H\setminus\acl(A)$, proceeding as in the proof of Lemma \ref{lemma-centralizer},  we can again find infinitely many automorphisms $f_i \in Aut_A(F)$ such that $f_i(d) \not \in f_j(d) C(c)$ for $i \neq j$. Clearly, these $f_i$ extend to $H$ and so $[(c,d)]_{1,m} \notin \acleq(A)$. \end{proof}

\bigskip
Recall that we write $\acl^c(\bar a)=\acleq(\bar a)\cap S_{E_0}$. For any 
subset $A$ of a group $G$ we also write  $A^c=\{b^G \mid b \in A\}$ for the set of conjugacy classes with representatives in $A$.

\begin{corollary}\label{cor-acleq} Let $H=\Gamma*F$ where $\G$ is \tfh (possibly trivial)  and $F$ is a nonabelian free group. For finite tuples $\bar a,\bar b,\bar c\in F$  we have
$$
\acleq_H(\bar a) \cap \acleq_H(\bar b)=\acleq_H(\bar c)
$$
if and only if 
$$
\acl_H^c(\bar a) \cap \acl_H^c(\bar b)=\acl_H^c(\bar c) \leqno (1)
$$ 
and
$$
\acl_H (\bar a) \cap \acl_H (\bar b) =\acl_H(\bar c). \leqno (2)
$$

\end{corollary}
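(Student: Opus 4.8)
The plan is to use Sela's strong geometric elimination of imaginaries relative to $\mathcal E$ (Theorem~\ref{sela-ima} and Corollary~\ref{cor-sela-ima}) together with Remark~\ref{lem-link} to reduce the equality of imaginary algebraic closures to a statement about the sorts $S_{E_0}, S_{E_{1,m}}, S_{E_{2,m}}, S_{E_{3,p,q}}$ only. By Remark~\ref{lem-link}, $\acleq_H(\bar a)\cap\acleq_H(\bar b)=\acleq_H(\bar c)$ holds if and only if the same equality holds after intersecting all three sides with $H_{\mathcal E}$. So the first step is to observe that an element of $H_{\mathcal E}$ lying in $\acleq_H(\bar a)$ is, up to interalgebraicity, one of the four basic kinds of imaginary: a conjugacy class, a coset $[(c,d)]_{1,m}$ or $[(c,d)]_{2,m}$, or a double coset $[(c,d,e)]_{3,p,q}$.

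The second step is to invoke the analysis already carried out in this section. For the coset and double-coset sorts, Proposition~\ref{prop-imag-acl} says that $[(c,d)]_{i,m}\in\acleq_H(\bar a)$ (resp. $[(c,d,e)]_{3,p,q}\in\acleq_H(\bar a)$) if and only if $c,d\in\acl_H(\bar a)$ (resp. $c,d,e\in\acl_H(\bar a)$); in particular such an imaginary is interalgebraic over $\emptyset$ with a tuple of \emph{real} elements of $\acl_H(\bar a)$. Hence the contribution of these sorts to $\acleq_H(\bar a)\cap H_{\mathcal E}$ is controlled entirely by $\acl_H(\bar a)$, and more precisely: $\acleq_H(\bar a)$ contains the class of a coset/double-coset built from $c,d(,e)$ exactly when $\acl_H(\bar a)$ contains $c,d(,e)$. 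For the conjugacy sort $S_{E_0}$, the contribution is by definition $\acl_H^c(\bar a)$. Here I should note that the hypothesis $\acl_H(\bar a)=\acl_F(\bar a)$ needed in Lemma~\ref{lemma-centralizer} and Proposition~\ref{prop-imag-acl} is automatic when $\bar a$ is a tuple from the free factor $F$: write $F=F_p*D$ with $F_p$ a finite-rank free factor containing $\bar a$ that is freely $A$-indecomposable (or note directly that $\acl_F(\bar a)\le F$ is a free factor), apply Theorem~\ref{theorem-acl} and Proposition~\ref{prop-spli-acl}, and use that $EC(\G)*F_p\preceq H$ together with Corollary~\ref{cor-elliptic} to see the algebraic closure does not grow when passing from $F$ to $H$.

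Putting these together, an element of $\acleq_H(\bar a)\cap H_{\mathcal E}$ decomposes as: (i) its conjugacy-class part, living in $\acl_H^c(\bar a)$, and (ii) its coset/double-coset part, which is interalgebraic over $\emptyset$ with real elements of $\acl_H(\bar a)$. Now intersect with the corresponding data for $\bar b$. Since interalgebraicity is transitive and the decomposition above is uniform, $\acleq_H(\bar a)\cap\acleq_H(\bar b)\cap H_{\mathcal E}$ is, up to interalgebraicity over $\emptyset$, exactly the data $(\acl_H^c(\bar a)\cap\acl_H^c(\bar b))\cup(\acl_H(\bar a)\cap\acl_H(\bar b))^{\mathcal E}$, where the second piece denotes the imaginary classes built from elements of $\acl_H(\bar a)\cap\acl_H(\bar b)$. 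Comparing with the same expression for $\bar c$, the equality $\acleq_H(\bar a)\cap\acleq_H(\bar b)=\acleq_H(\bar c)$ is equivalent to the conjunction of $\acl_H^c(\bar a)\cap\acl_H^c(\bar b)=\acl_H^c(\bar c)$ and $\acl_H(\bar a)\cap\acl_H(\bar b)=\acl_H(\bar c)$, which is exactly (1) and (2). For the direction ``(1) and (2) imply the $\acleq$ equality'' one uses that $\acl_H(\bar c)\subseteq\acl_H(\bar a)$ forces $\acl_H^c(\bar c)\subseteq\acl_H^c(\bar a)$ and similarly for the coset sorts, so $\acleq_H(\bar c)\subseteq\acleq_H(\bar a)\cap\acleq_H(\bar b)$ is the easy inclusion, and the reverse inclusion is the content of the case analysis above.

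The main obstacle I anticipate is bookkeeping rather than conceptual: making sure that "the coset part of $\acleq_H(\bar a)$ is determined by the real part $\acl_H(\bar a)$" is stated and used correctly, i.e. that an imaginary in $S_{E_{i,m}}$ or $S_{E_{3,p,q}}$ lying in $\acleq_H(\bar a)\cap\acleq_H(\bar b)$ really is built from elements that lie in $\acl_H(\bar a)\cap\acl_H(\bar b)$ and not merely from elements each of which happens to be algebraic over one side — but this follows because Proposition~\ref{prop-imag-acl} makes $c,d(,e)$ individually algebraic over \emph{both} $\bar a$ and $\bar b$ as soon as the imaginary class is, so the intersection is taken at the level of the real generators. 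The remaining care is the verification that Corollary~\ref{cor-H*F} (representatives in $\G$ or in $F$) is compatible with this, but since $\bar a,\bar b,\bar c$ all come from $F$ and $\acl^c_H(1)=\acl^c_\G(1)$ sits inside every $\acl^c_H(\bar a)$, those "$\G$-part" conjugacy classes appear on all three sides and cancel.
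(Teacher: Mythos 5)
Your proposal is correct and follows essentially the same route as the paper: reduce via Theorem~\ref{sela-ima} and Remark~\ref{lem-link} to the sorts in $\mathcal E$, then handle $E_0$ by assumption $(1)$ and the sorts $E_{1,m}, E_{2,m}, E_{3,p,q}$ by assumption $(2)$ together with Proposition~\ref{prop-imag-acl}. Your extra verification that the hypothesis $\acl_H(\bar a)=\acl_F(\bar a)$ of Proposition~\ref{prop-imag-acl} holds for tuples from the free factor $F$ is a detail the paper leaves implicit, and is a welcome addition rather than a deviation.
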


\begin{proof} By Theorem \ref{thm-equiv-infiniterank} we have $\G*F_2$ is elementarily equivalent to $H$ and thus we can apply Theorem \ref{sela-ima}. 

One direction is clear. For the other direction, 
by Theorem \ref{sela-ima} and Remark \ref{lem-link} it suffices to show
that 

$$(\acleq(\bar a) \cap F_{\mathcal E}) \cap (\acleq(\bar b) \cap F_{\mathcal E})=\acleq(\bar c) \cap F_{\mathcal E}$$

where $\mathcal E$ is the set of equivalence relations given in Theorem \ref{sela-ima}. For $E_0$ this is assumption $(1)$, for $E_{1,m},E_{2,m},E_{3,p,q}$ this
follows from $(2)$ and Proposition~\ref{prop-imag-acl}.
\end{proof}

\begin{definition} Let $G$ be a group and $\bar a $ a tuple from $G$. We say that $\bar a $ \rm{represents conjugacy} (in $G$)  if $\acl_G^c(\bar a)=\acl_G(\bar a)^c$. 

\end{definition}

To verify the properties of an ample sequence in a free factor of a
\tfh group it now suffices to restrict to this free factor:

\begin{lemma}\label{lemma-freefactor}
Let $H=\G*F$ where  $F$  is a  free group and $\G$  is \tfh not elementarily equivalent to a free group. 
For finite tuples $\bar a,\bar b,\bar c\in F$ generating nonabelian subgroups, representing conjugacy  in $F$  and such that $\acl_H(\bar a)=\acl_F(\bar a), \acl_H(\bar b)=\acl_F(\bar b), \acl_H(\bar c)=\acl_F(\bar c)$, we have   

\[\acleq_F(\bar a)\cap\acleq_F(\bar b)=\acleq_F(\bar c)\]

if and only if 

\[\acleq_H(\bar a)\cap\acleq_H(\bar b)=\acleq_H(\bar c).\]

\end{lemma}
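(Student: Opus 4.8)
The plan is to reduce the statement about imaginary algebraic closures in $H=\G*F$ to the already-established statement about imaginary algebraic closures in $F$, using Corollary~\ref{cor-acleq} on the $H$-side and Remark~\ref{lem-link} together with Sela's relative geometric elimination of imaginaries (Corollary~\ref{cor-sela-ima}, Theorem~\ref{sela-ima}) on the $F$-side. First I would record the observation that, since $\G$ is not elementarily equivalent to a free group, $EC(\G)$ is nontrivial, and by Lemma~\ref{lem-elem} there is a free factor $F_n$ of finite rank containing $\bar a,\bar b,\bar c$ with $EC(\G)*F_n\preceq H$ and $EC(\G)*F_n\preceq \G*F$; moreover $F_n\preceq F$. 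Because algebraic closure does not depend on the model and passes well along elementary extensions, all the relevant algebraic and imaginary-algebraic closures may be computed in $EC(\G)*F_n$ or in $F_n$. So without loss of generality I would assume $\G=EC(\G)$ and $F=F_n$ has finite rank, so that Theorem~\ref{theorem-acl} and Corollary~\ref{cor-H*F} apply directly.

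Next, by Corollary~\ref{cor-acleq}, the right-hand equality $\acleq_H(\bar a)\cap\acleq_H(\bar b)=\acleq_H(\bar c)$ is equivalent to the conjunction of the ``conjugacy'' equality
\[\acl_H^c(\bar a)\cap\acl_H^c(\bar b)=\acl_H^c(\bar c)\]
and the ``real'' equality $\acl_H(\bar a)\cap\acl_H(\bar b)=\acl_H(\bar c)$. By the hypothesis $\acl_H(\bar x)=\acl_F(\bar x)$ for $\bar x\in\{\bar a,\bar b,\bar c\}$, the real equality in $H$ is literally the same as the real equality $\acl_F(\bar a)\cap\acl_F(\bar b)=\acl_F(\bar c)$ in $F$. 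Applying Corollary~\ref{cor-acleq} in $F$ itself (taking $\G$ trivial there, or rather using the version of Sela's theorem for $F$ directly), the left-hand equality $\acleq_F(\bar a)\cap\acleq_F(\bar b)=\acleq_F(\bar c)$ is equivalent to the conjunction of $\acl_F^c(\bar a)\cap\acl_F^c(\bar b)=\acl_F^c(\bar c)$ and the same real equality. So the whole lemma comes down to proving: under the standing hypotheses, the conjugacy equality holds in $H$ if and only if it holds in $F$, given that the real equality holds on both sides (which, as noted, is a single common condition).

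For that last reduction I would use Corollary~\ref{cor-H*F}: since $\bar a$ generates a nonabelian subgroup, every conjugacy class $g^H\in\acl_H^c(\bar a)$ has a representative $g'$ either in $\G$ or in $F$; if $g'\in\G$ then ${g'}^H\in\acl_H^c(1)=\acl_\G^c(1)$, and if $g'\in F$ then ${g'}^F\in\acl_F^c(\bar a)$. Conversely, any class in $\acl_F^c(\bar a)$ or in $\acl_\G^c(1)$ clearly lies in $\acl_H^c(\bar a)$ (the first because $\acl_F^c(\bar a)\subseteq\acl_H^c(\bar a)$ by elementarity of $F$ inside $H$ via $\acl_H(\bar a)=\acl_F(\bar a)$, or directly; the second because $\G=EC(\G)\preceq H$). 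Hence
\[\acl_H^c(\bar x)=\acl_\G^c(1)\ \dot\cup\ \acl_F^c(\bar x)\]
as a disjoint decomposition into the ``$\G$-part'' and the ``$F$-part'' (disjointness because a class with a representative in $\G$ and a representative in $F$ would have to be trivial in the free product). Intersecting two such decompositions, the $\G$-part $\acl_\G^c(1)$ is common to all three of $\bar a,\bar b,\bar c$, so it drops out of both sides of the intersection equality, and what remains is exactly $\acl_F^c(\bar a)\cap\acl_F^c(\bar b)=\acl_F^c(\bar c)$. This is where the hypothesis that $\bar a,\bar b,\bar c$ ``represent conjugacy in $F$'' gets used only insofar as it is needed to invoke Corollary~\ref{cor-acreq}; actually I expect it is the hypothesis $\acl_H=\acl_F$ that does the real work here, while ``represents conjugacy'' is used (if at all) to keep the bookkeeping between $\acl^c$ and $\acl(\cdot)^c$ clean. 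The main obstacle I anticipate is making the decomposition $\acl_H^c(\bar x)=\acl_\G^c(1)\,\dot\cup\,\acl_F^c(\bar x)$ fully rigorous --- in particular checking that a class in $\acl_H^c(\bar a)$ with a representative in $\G$ really lies in $\acl_\G^c(1)$ and not merely in $\acl_\G^c(\text{something})$, which is precisely the content of the ``$g'\in\G\Rightarrow{g'}^H\in\acl_H^c(1)$'' clause of Corollary~\ref{cor-H*F}, itself relying on the independence of $EC(\G)$ and $F$ over $\emptyset$ --- and then verifying that this common $\G$-part cancels cleanly from the intersection on both sides.
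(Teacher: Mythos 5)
Your proposal is correct and follows essentially the same route as the paper: reduce via Corollary~\ref{cor-acleq} to the real and conjugacy-class parts, dispose of the real part by the hypothesis $\acl_H=\acl_F$, and use Corollary~\ref{cor-H*F} to split $\acl_H^c(\bar x)$ into a common $\G$-part $\acl_H^c(1)$ and an $F$-part that cancels against $\acl_F^c(\bar x)$. One caveat: since $\G$ is not elementarily equivalent to a free group, $F$ is \emph{not} an elementary substructure of $H$ (indeed $Th(F)\neq Th(\G)=Th(H)$), so the inclusion $\acl_F^c(\bar a)\subseteq\acl_H^c(\bar a)$ is not ``clear by elementarity''; it is precisely here that the hypothesis that $\bar a$ represents conjugacy does essential work (not merely ``bookkeeping''), via $\acl_F^c(\bar a)=\acl_F(\bar a)^c=\acl_H(\bar a)^c\subseteq\acl_H^c(\bar a)$, which is the route your parenthetical ``via $\acl_H(\bar a)=\acl_F(\bar a)$'' correctly gestures at.
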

\begin{proof}
By Corollary~\ref{cor-acleq} and the assumption on $\bar a,\bar b$ and $\bar c$
it suffices to verify that 
\[
\acl_F^c(\bar a) \cap \acl_F^c(\bar b)=\acl_F^c(\bar c)
\]
if and only if 
\[
\acl_H^c(\bar a) \cap \acl_H^c(\bar b)=\acl_H^c(\bar c). 
\]

But this follows from Corollary~\ref{cor-H*F} and the assumption that the considered tuples represent conjugacy:
for any nonabelian subgroup $A\leq  F$ 
the conjugacy classes in $\acl_H^c(A)$ have representatives either in $\acl_F^c(A)$ or in  $\acl_H^c(1)$ and since $\bar a$ say represent conjugacy in $F$ we have in fact that $g^H \in \acl_H^c(\bar a)$ if and only if either $g^H \in \acl_H^c(1)$ or $g$ has a representative $g' \in F$ such that $g'^F\in \acl_F^c(\bar a)$. 
\end{proof}

\begin{corollary}\label{cor-reduction}
Let $H=\G*F$ where  $F$  is a  free group and $\G$  is \tfh not elementarily equivalent to a free group. Suppose that $a_0,\ldots, a_n$ are finite tuples in $F$, each $a_i$ generating a nonabelian free factor of $F$  and witnessing
that $F$ is $n$-ample and such that for $0\leq i\leq k$ we have
\[\acl_F(a_0,\ldots a_i,a_k)=\acl_H(a_0,\ldots a_i,a_k),\]
\[\acl_F^c(a_0,\ldots a_i,a_k)=\acl_F(a_0,\ldots a_i,a_k)^c.\]
Then $a_0,\ldots, a_n$ witness the fact that $Th(\G)$ is $n$-ample.
\end{corollary}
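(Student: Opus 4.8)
The plan is to reduce the ampleness of $Th(\Gamma)$ to that of $F$ by transferring the defining conditions of Definition~\ref{def-ample} from $F$ to $H=\Gamma*F$, using the fact that $\Gamma$ is elementarily equivalent to $H$ (Theorem~\ref{thm-equiv-infiniterank}), so $Th(\Gamma)=Th(H)$, and hence it suffices to show that $a_0,\ldots,a_n$ witness $n$-ampleness in $H$. The four conditions to transfer are: the dependence $a_n\NotInd{}{}a_0$, the independences $a_0\ldots a_{i-1}\Ind_{a_i}a_{i+1}\ldots a_n$, and the two families of algebraic-closure intersection conditions $(iii)$ and $(iv)$.

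First I would handle the independence statements. Since each $a_i$ generates a nonabelian free factor of $F$ and the hypotheses give $\acl_F(a_0\ldots a_i,a_k)=\acl_H(a_0\ldots a_i,a_k)$, I would like to apply Theorem~\ref{t-indep}: independence over a free factor with finite basis in $H$ is equivalent to independence over that free factor in $F$. The mild subtlety is that conditions $(i)$ and $(ii)$ of ampleness are stated as independence over the tuples $a_i$ (or over $\emptyset$ for $(i)$), not literally over free factors; but since $a_i$ generates a nonabelian free factor $C_i$ of $F$ and $\Ind_{a_i}$ coincides with $\Ind_{C_i}$ (as $a_i$ is a basis of $C_i$), and $a_0\ldots a_{i-1}$, $a_{i+1}\ldots a_n$ are again tuples from $F$, Theorem~\ref{t-indep} applies directly; the dependence $(i)$ transfers the same way (using $C=1$, i.e.\ Corollary~\ref{l-independence}/Lemma~\ref{lem-independence} and monotonicity). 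Thus $a_n\NotInd{}{}a_0$ and the intermediate independences hold in $H$ iff they hold in $F$, which they do by hypothesis.

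Next I would transfer the imaginary algebraic closure conditions. Condition $(iii)$ reads $\acleq_H(a_0)\cap\acleq_H(a_1)=\acleq_H(\emptyset)$; since $a_0,a_1$ generate nonabelian free factors, represent conjugacy in $F$, and satisfy $\acl_H(a_i)=\acl_F(a_i)$, this is a direct application of Lemma~\ref{lemma-freefactor} with $\bar c=\emptyset$ --- wait, $\emptyset$ does not generate a nonabelian subgroup, so instead I would invoke Corollary~\ref{cor-acleq} directly: the equation $\acleq_H(a_0)\cap\acleq_H(a_1)=\acleq_H(\emptyset)$ is equivalent to the conjunction of $\acl_H^c(a_0)\cap\acl_H^c(a_1)=\acl_H^c(\emptyset)$ and $\acl_H(a_0)\cap\acl_H(a_1)=\acl_H(\emptyset)$, and each of these transfers from $F$ to $H$ via Corollary~\ref{cor-H*F} together with $\acl_H^c(1)=\acl_\Gamma^c(1)$ and the ``represents conjugacy'' hypothesis, exactly as in the proof of Lemma~\ref{lemma-freefactor}. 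For $(iv)$, note that for $1\leq i<n$ the relevant tuples are $(a_0\ldots a_{i-1}a_i)$ and $(a_0\ldots a_{i-1}a_{i+1})$ and their ``base'' $(a_0\ldots a_{i-1})$, and the hypotheses are stated precisely so that these concatenated tuples generate nonabelian free factors of $F$, represent conjugacy in $F$, and have matching $\acl_H$ and $\acl_F$; hence Lemma~\ref{lemma-freefactor} applies verbatim to give $\acleq_H(a_0\ldots a_i)\cap\acleq_H(a_0\ldots a_{i-1}a_{i+1})=\acleq_H(a_0\ldots a_{i-1})$ from the corresponding identity in $F$, which holds because $a_0,\ldots,a_n$ witness $n$-ampleness of $F$.

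Assembling these, all four conditions of Definition~\ref{def-ample} hold for $a_0,\ldots,a_n$ in $H$, so $Th(H)$ is $n$-ample, and since $Th(H)=Th(\Gamma)$ by Theorem~\ref{thm-equiv-infiniterank}, $Th(\Gamma)$ is $n$-ample. The main obstacle I anticipate is purely bookkeeping: verifying that every concatenated tuple $a_0\ldots a_i a_k$ appearing in conditions $(iii)$ and $(iv)$ does generate a \emph{nonabelian free factor} of $F$ (needed to invoke Theorem~\ref{t-indep} and Lemma~\ref{lemma-freefactor}) and does \emph{represent conjugacy} in $F$ --- but these are exactly the standing hypotheses of the corollary, so no real work is required here; the substantive content has already been isolated in Theorem~\ref{t-indep}, Corollary~\ref{cor-acleq}, Corollary~\ref{cor-H*F}, and Lemma~\ref{lemma-freefactor}.
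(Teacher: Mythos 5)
Your proposal is correct and follows exactly the paper's route: the paper's proof of this corollary is the one-line "This follows from Lemma~\ref{lemma-freefactor} and Theorem~\ref{t-indep}," and you have simply unpacked that citation, transferring the (in)dependence conditions via Theorem~\ref{t-indep} (with Corollary~\ref{l-independence}/Lemma~\ref{lem-independence} for the base $\emptyset$) and the $\acleq$-intersection conditions via Lemma~\ref{lemma-freefactor}, falling back on Corollary~\ref{cor-acleq} and Corollary~\ref{cor-H*F} for condition $(iii)$ where the base is empty. Your extra care with the $\bar c=\emptyset$ case and with the bookkeeping of which concatenated tuples the hypotheses cover is a faithful elaboration of details the paper leaves implicit, not a different argument.
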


\begin{proof}
This follows from Lemma~\ref{lemma-freefactor} and Theorem~\ref{t-indep}. 
\end{proof}

\section{The construction in the free group}

In this section we will be working exclusively in nonabelian free groups and therefore all notions of algebraic closure and independence refer to the theory $T_{fg}$. Our
main objective here is to construct sequences witnessing the ampleness. 
Corollary~\ref{cor-reduction} then allows us to transfer the results in Section~5
to \tfh groups to obtain our main theorem.

\bigskip
\noindent
Let 
$$
H_i=\<c_i, d_i, a_i, b_i \mid c_id_i[a_i, b_i]=1\>,
$$
that  is  $H_i$ is the fundamental group of an orientable surface with $2$ boundary components and genus~$1$, where $c_i$ and $d_i$ are the generators of  boundary subgroups. Note that $H_i$ is a free group of rank $3$ with bases $a_i,b_i,c_i$ or $a_i,b_i,d_i$. Let
$$
P_n=H_0*H_1*\cdots *H_{n-1}*H_n,
$$
and 
$$
G_0=P_0=H_0, \; \; G_n=\<P_n, t_i, 0 \leq i \leq n-1\mid d_i^{t_i}=c_{i+1}\> \hbox{ for }n \geq 1. 
$$

\begin{remark}
Note that for any $k< n$ we have
$$
G_n=\<G_k*H_{k+1}*\ldots *H_n, t_j,k\leq j\leq n-1\mid d_j^{t_j}=c_{j+1}\>.
$$
\end{remark}

One of the principal properties that we will use is that gluing together surfaces on boundary subgroups gives new surfaces. For $i\geq 0$, let  $\overline h_i=(a_i,b_i,c_i)$ be the given  basis of $H_i$. We are going to  show that the sequence $\overline h_0,\overline h_2,\ldots, \overline h_{2n}$ is a witness for the $n$-ample property in $G_{2n}$.  The proof is divided into a sequence of lemmas.

\begin{lemma}\label{l-freefactor} $\;$

$(1)$  For $0 \leq i \leq n$, $G_i$ is a free factor of $G_n$ and
$G_n$ is a free group of rank $3(n+1)$. 

$(2)$ For each $0 \leq i \leq n$, $H_i$ is a free factor of $G_n$. 

\end{lemma}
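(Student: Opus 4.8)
The plan is to argue by induction on $n$, exploiting the recursive structure $G_n=\langle G_{n-1}*H_n, t_{n-1}\mid d_{n-1}^{t_{n-1}}=c_n\rangle$ and the fact that an HNN-extension along an infinite cyclic edge group which is a free factor of both sides (here $\langle d_{n-1}\rangle$ is a free factor of $G_{n-1}$ by the inductive hypothesis and $\langle c_n\rangle$ is a free factor of $H_n$ since $a_n,b_n,c_n$ is a basis of $H_n$) is again free, with the stable letter becoming part of a basis. First I would record the base case $n=0$: $G_0=H_0$ is free of rank $3$ and is trivially a free factor of itself, and $H_0=G_0$ is a free factor of $G_0$. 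For the inductive step, assume $G_{n-1}$ is free of rank $3n$ and that $G_i$ and $H_i$ are free factors of $G_{n-1}$ for $0\le i\le n-1$.

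For part (1): since $\langle d_{n-1}\rangle$ is a free factor of $G_{n-1}$, write $G_{n-1}=L*\langle d_{n-1}\rangle$; similarly $H_n=M*\langle c_n\rangle$ where $M=\langle a_n,b_n\rangle$. Then in the HNN-extension $G_n=\langle G_{n-1}*H_n,t_{n-1}\mid d_{n-1}^{t_{n-1}}=c_n\rangle$ the relation lets us eliminate the generator $c_n$ (setting $c_n=d_{n-1}^{t_{n-1}}$), so $G_n$ is generated by a basis of $L$, a basis of $M$, the element $d_{n-1}$, and $t_{n-1}$; a standard Tietze/Nielsen computation shows this is in fact a free basis, giving $G_n$ free of rank $\mathrm{rk}(G_{n-1})+\mathrm{rk}(H_n)=3n+3=3(n+1)$, with $G_{n-1}$ appearing as a free factor (it is generated by a sub-basis). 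Since $G_i$ is a free factor of $G_{n-1}$ by induction and free factor is transitive, $G_i$ is a free factor of $G_n$ for $i\le n-1$, and $G_n$ is a free factor of itself. For part (2): $H_i$ is a free factor of $G_{n-1}$ for $i\le n-1$ by induction, hence of $G_n$ by transitivity; and $H_n$ is a free factor of $G_n$ because in the basis just exhibited, a basis of $M=\langle a_n,b_n\rangle$ together with $d_{n-1}$ (which equals $c_n^{t_{n-1}^{-1}}$, a conjugate of $c_n$) $\ldots$ — more cleanly, note $H_n=\langle a_n,b_n,c_n\rangle$ and after the substitution $c_n=d_{n-1}^{t_{n-1}}$ one checks that $\{a_n,b_n\}\cup\{$a basis of $L\}\cup\{d_{n-1}^{t_{n-1}}\}\cup\{t_{n-1}\}$ is again a free basis of $G_n$ of the same cardinality, exhibiting $H_n=\langle a_n,b_n,d_{n-1}^{t_{n-1}}\rangle$ as a free factor. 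One should double-check that these Nielsen transformations genuinely produce bases and not merely generating sets; this is the only point requiring care.

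The main obstacle is purely bookkeeping: verifying that the generating sets produced by collapsing the HNN relation are actually free bases. This can be handled uniformly by invoking the elementary fact that if $G=K*\langle x\rangle$ and $\langle x\rangle$ is identified (via an HNN stable letter $t$) with a free factor $\langle y\rangle$ of another free group $P=Q*\langle y\rangle$, then $\langle K*P, t\mid x^t=y\rangle$ is free with basis $(\text{basis of }K)\cup(\text{basis of }Q)\cup\{x,t\}$ — because we may use the relation to delete $y$, and $t$ is a genuinely new free generator as no relation constrains it once $y$ is eliminated. Applying this with $K=L$, $x=d_{n-1}$, $P=H_n$, $Q=M=\langle a_n,b_n\rangle$, $y=c_n$, $t=t_{n-1}$ yields both the rank count and the free-factor claims for $G_{n-1}$, $G_n$, and (by symmetry in $K*\langle x\rangle$ versus $Q*\langle y\rangle$, conjugating by $t$) for $H_n$.
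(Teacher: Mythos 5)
Your proposal is correct and follows essentially the same route as the paper: an induction on $n$ in which the HNN relation $d_{n-1}^{t_{n-1}}=c_n$ is collapsed using the primitivity of $c_n$ in $H_n$ (and of $d_{n-1}$ in $G_{n-1}$), exhibiting $G_n=G_{n-1}*\langle t_{n-1},a_n,b_n\rangle$ and hence the free-factor and rank claims. The paper phrases the collapse as an amalgamated product $(G_{n-1}*\langle t_{n-1}\rangle)*_{d_{n-1}^{t_{n-1}}=c_n}H_n$ rather than a Tietze elimination, but this is the same computation.
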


\begin{proof} $\;$
$(1)$ The proof is by induction on $n$. For $n=0$, we already noted that $G_0=H_0$ is a free group of rank $3$. For the induction step it suffices to show that $G_n$ is a free factor of $G_{n+1}$ with a complement which is free of rank $3$. We have 
$$
G_{n+1}=(G_n*\<t_n\mid \>)*_{d_n^{t_n}=c_{n+1}}\<c_{n+1}, a_{n+1}, b_{n+1}\mid \>,
$$
and since $c_{n+1}$ is primitive in $\<c_{n+1}, a_{n+1}, b_{n+1}\mid \>$, the free group generated by $t_n,a_{n+1},b_{n+1}$ is a free factor in $G_{n+1}$.
This proves the claim.

$(2)$ In view of $(1)$ it suffices to show by induction on $i$ that $H_i$ is a free factor of $G_i$ for $0\leq i\leq n$. For $i=0$, there is nothing to prove. 
For $i+1$, we have  as above
$$
G_{i+1}=(G_i*\<t_i\mid \>)*_{d_i^{t_i}=c_{i+1}}\<c_{i+1}, a_{i+1}, b_{i+1}\mid \>,
$$
and since by induction $H_i$ is a free factor of $G_i$, $d_i$ is primitive in $G_i$. In particular, $d_i^{t_i}$ is primitive in $(G_i*\<t_i\mid \>)$ and we conclude that $H_{i+1}$ is a free factor of $G_{i+1}$, as required. 
\end{proof}

\begin{lemma}\label{lemma-surface}  For $n=2k\geq 2$, we can write
$$
G_n=S*\<t_0\mid \>*\ldots *\< t_{n-1}\mid \>. \leqno (1)
$$
for a surface group $S$ with
$$
S=\<c_0, d_n', a_0, b_0, a'_1, b'_1,\ldots  a'_n, b'_n\mid c_0d_n'[a_n',n_1']\ldots [a_1',b_1'][a_0, b_0]=1\>
$$
\noindent
where $d_n'$ is conjugate to $d_n$.

\end{lemma}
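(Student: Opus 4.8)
The plan is to build the surface $S$ iteratively by absorbing the stable letters $t_i$ into the surface group one handle at a time, using the basic observation that gluing two surfaces along boundary curves produces a new surface. I start from $G_n$ expressed, using the Remark, via the iterated HNN/amalgam structure $d_i^{t_i}=c_{i+1}$. For $n=2$, note that $G_2$ contains $H_0 *_{d_0^{t_0}=c_1} H_1 *_{d_1^{t_1}=c_2} H_2$ as a subgroup after conjugating: more precisely, setting $c_1' = c_1$, $c_2' = c_2^{t_1^{-1}}$ and replacing the generators of $H_1$, $H_2$ by appropriate conjugates, one sees that gluing $H_1$ to $H_0$ along $d_0 \sim c_1$ and then $H_2$ along $d_1 \sim c_2$ yields the fundamental group of the surface obtained by gluing three genus-$1$ two-holed surfaces along boundary circles in a chain. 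The resulting surface has genus $3$ and $2$ boundary components, with boundary curves $c_0$ and a conjugate of $d_2$, and the standard surface relation is $c_0 d_n'[a_n',b_n']\cdots[a_1',b_1'][a_0,b_0]=1$ after suitable renaming. The letters $t_0,\dots,t_{n-1}$ are then free generators of a complementary free factor, exactly as in the proof of Lemma~\ref{l-freefactor}(1): each $t_i$ together with two of the new handle generators was shown there to split off as a rank-$3$ free factor, but here we keep the handle generators inside $S$ and split off only the single generator $t_i$.

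More carefully, I would argue by induction on $k$. The inductive claim is that $G_{2k}$ decomposes as $S_k * \langle t_0\rangle * \cdots * \langle t_{2k-1}\rangle$ where $S_k$ is a surface group of genus $2k+1$ with two boundary components generated by $c_0$ and a conjugate of $d_{2k}$. The base case $k=1$ is the computation sketched above. For the inductive step, write (using the Remark with $k\mapsto 2k$)
\[
G_{2k+2}=\langle G_{2k}*H_{2k+1}*H_{2k+2},\ t_{2k},t_{2k+1}\mid d_{2k}^{t_{2k}}=c_{2k+1},\ d_{2k+1}^{t_{2k+1}}=c_{2k+2}\rangle.
\]
Apply the inductive hypothesis to $G_{2k}$, so that $d_{2k}$ (which lies in the surface part $S_k$ as a conjugate of a boundary curve) can be conjugated to an honest boundary curve of $S_k$; gluing $H_{2k+1}$ along this boundary curve via $t_{2k}$ produces a new surface with genus raised by $1$, and then gluing $H_{2k+2}$ along the image of $d_{2k+1}$ via $t_{2k+1}$ raises the genus by $1$ again, giving genus $2k+3=2(k+1)+1$. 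The two free generators $t_{2k},t_{2k+1}$ split off as free factors just as before because each is paired with a primitive element (a boundary curve) in the appropriate free product. Renaming boundary and handle generators puts the relation into the displayed normal form.

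The main obstacle is the bookkeeping needed to see that gluing along conjugates of boundary curves (rather than the boundary curves themselves) still yields a surface group, and in particular that one can simultaneously keep the handle generators inside $S$ while peeling off precisely the stable letters $t_i$ as free factors — the proof of Lemma~\ref{l-freefactor}(1) peels off $t_i$ together with two handle generators, so one must reorganize the free-product decomposition. Concretely, the point is that in $\langle G_i*\langle t_i\rangle\rangle *_{d_i^{t_i}=c_{i+1}}\langle c_{i+1},a_{i+1},b_{i+1}\rangle$, once $d_i$ has been normalized to a boundary curve of the surface part, the amalgamated product along $d_i^{t_i}=c_{i+1}$ realizes the topological gluing of surfaces, and the isomorphism type of the result is a surface group; the generator $t_i$ then appears only in the ``gluing data'' and contributes a free factor. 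I expect this to be a routine but slightly delicate application of Van Kampen's theorem and normal form arguments for amalgams, which is why the lemma is labelled standard.
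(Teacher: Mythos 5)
Your strategy is sound and rests on the same underlying observation as the paper's proof (which the authors announce just before the lemma: gluing surfaces along boundary subgroups gives new surfaces), but the execution is organized differently. The paper does not induct: it writes $G_n$ once and for all as the free product of $\langle a_0,b_0\rangle$, the $\langle t_i\rangle$, the $\langle a_i,b_i\rangle$ and $\langle d_n,a_n,b_n\rangle$, then substitutes the defining relations $c_i=[b_i,a_i]d_i^{-1}$ and $d_i=t_ic_{i+1}t_i^{-1}$ into one another to express $c_0$ as a product of commutators of conjugated generators times a conjugate of $d_n^{-1}$, and finally replaces the handle generators by their conjugates under the explicit elements $s_i^{-1}=t_0\cdots t_i$ so that the relation takes the standard surface form and the $t_i$ visibly generate a complementary free factor. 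Your inductive version, absorbing one $H_{i+1}$ at a time, is a legitimate alternative, and the step you flag as the main obstacle is indeed the crux: you need that $\langle A*B,\,t\mid d^t=c\rangle\cong\bigl(A*_{d=tct^{-1}}tBt^{-1}\bigr)*\langle t\rangle$. This is a two-line Tietze transformation (replace the generators of $B$ by their $t$-conjugates, after which $t$ occurs in no relation and the remaining relation $d=tct^{-1}$ presents the pushout, i.e.\ the cyclic amalgam, since $d$ and $c$ have infinite order), together with the remark that a conjugate $d_{2k}^{s}$ being a boundary curve of $S_k$ can be turned into $d_{2k}$ itself being glued by changing the stable letter $t_{2k}$ to $s^{-1}t_{2k}$. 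So your proof is correct once that conversion is written out; what the paper's one-pass computation buys is the explicit surface relation and explicit conjugators, which are reused later (e.g.\ in Corollary~\ref{cor-splitting}), whereas your induction buys a cleaner conceptual picture at the cost of leaving the base case and the gluing step as assertions. One small inaccuracy: the reason each $t_i$ splits off is the Tietze argument above, not the primitivity of $c_{i+1}$ in $H_{i+1}$ (primitivity is what Lemma~\ref{l-freefactor} uses for the different purpose of showing $G_i$ and $H_i$ are free factors).
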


\begin{proof} 
We have 
$$
G_n=\<a_0, b_0\mid \>*(\<t_0\mid \>*\<a_1, b_1\mid \>*\ldots *\<t_{n-1}\mid \>)*\<d_n, a_n, b_n\mid \>, 
$$
where

$$
c_n=[b_n, a_n]d_n^{-1},  \; d_{n-1}=t_{n-1}c_nt_{n-1}^{-1},  \; c_{n-1}=[b_{n-1}, a_{n-1}]d_{n-1}^{-1},\ldots ,\; d_0=t_0c_1t_0^{-1}, \; c_0=[b_0, a_0]d_0^{-1}.
$$

\noindent
Replacing successively and setting $s_i^{-1}=t_0\ldots t_i$ we obtain:

$$
c_0=[b_0,a_0][b_2,a_2]^{s_1}[b_4,a_4]^{s_3}\ldots [b_n,a_n]^{s_{n-1}} (d_n^{-1})^{s_{n-1}}[a_{n-1},b_{n-1}]^{s_{n-2}}\ldots [a_1,b_1]^{s_0}.
$$

\noindent
For $0<i\leq n$ put
$$
a'_i=a_1^{s_{i-1}}, b'_i=b_1^{s_{i-1}}, \; \mbox{ and }\; d_n'=d_2^{s_{n-1}}.
$$

\noindent
Then 

$$
c_0=[b_0,a_0][b'_2,a'_2][b'_4,a'_4]\ldots [b'_n,a'_n] (d'_n)^{-1}[a'_{n-1},b'_{n-1}]\ldots [a'_1,b'_1].
$$

\noindent
Finally for $i=1,\ldots k$, put 
$$
a''_{2i-1}=a_{2i-1}^{d'_n},\; \mbox{ and }\; b''_{2i-1}=b_{2i-1}^{d'_n}.
$$

\medskip
\noindent
Then 
$$
G_n=\<a_0, b_0\mid \>*\<t_0\mid \>*\<a''_1, b''_1\mid \>*\<t_1\mid \>*
\<a'_2, b'_2\mid \>*\<t_2\mid \>*\ldots *(\<t_{n-1}\mid \>*
\<d_n', a'_n, b'_n\mid \>,
$$

and

$$
c_0=[b_0,a_0][b'_2,a'_2][b'_4,a'_4]\ldots [b'_n,a'_n] [a''_{n-1},b''_{n-1}]\ldots [a''_1,b''_1](d'_n)^{-1}.
$$

\noindent
With
$$
S=\<c_0, d_n', a_0, b_0, a''_1, b''_1, a'_2, b'_2,\ldots a'_n,b'_n\mid c_0d_n'[b''_1,a''_1][b''_3,a''_3]\ldots [a_n',b_n']\ldots [a_2,b_2][a_0, b_0]=1\>
$$
we have
$$
G_n=S*\<t_0\mid \>*\ldots *\< t_{n-1}\mid \>. \leqno (1)
$$

 \end{proof}

As $S$ is the fundamental group of an orientable surface with 
genus~$\geq 1$ and two boundary subgroups generated by $c_0$ and $d_n'$
we may apply Lemma~\ref{split-surface} to $S$ and obtain the following
corollary:

\begin{corollary}\label{cor-splitting}
Suppose $n=2k\geq 2$ and $g\in G_n\setminus\{1\}$ is not conjugate to a power of $c_0$ or of $d_n$. Then there exists a malnormal cyclic  splitting of $G_n$ such that $c_0$ and $d_n$ are elliptic and $g$ is hyperbolic.
\end{corollary}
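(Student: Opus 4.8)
The plan is to deduce Corollary~\ref{cor-splitting} directly from Lemma~\ref{lemma-surface} and Lemma~\ref{split-surface} by transporting a splitting of the surface group $S$ across the free product decomposition $G_n=S*\langle t_0\mid\rangle*\dots*\langle t_{n-1}\mid\rangle$. First I would observe that $S$, being the fundamental group of an orientable surface of genus~$k\geq 1$ with exactly two boundary components (generated by $c_0$ and a conjugate $d_n'$ of $d_n$), satisfies $2g(S)+b(S)=2k+2\geq 4$, so Lemma~\ref{split-surface} applies to $S$ verbatim.

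The key steps, in order, are the following. (1) Take $g\in G_n\setminus\{1\}$ not conjugate (in $G_n$) to a power of $c_0$ or of $d_n$. Using the free product decomposition of Lemma~\ref{lemma-surface}(1) and the Kurosh subgroup theorem, $g$ is conjugate in $G_n$ either to an element of $S$ or to a power of some $t_j$. (2) In the first case, replace $g$ by its conjugate $g_0\in S$; note that $g_0$ is not conjugate in $S$ to any element of a boundary subgroup of $S$, because boundary subgroups of $S$ are conjugate to $\langle c_0\rangle$ and $\langle d_n'\rangle=\langle d_n\rangle^s$, and a conjugacy inside $S$ would give a conjugacy inside $G_n$ to a power of $c_0$ or $d_n$, contrary to hypothesis. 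Hence Lemma~\ref{split-surface} produces a malnormal cyclic splitting $\Lambda_S$ of $S$ in which $g_0$ is hyperbolic and the boundary subgroups $\langle c_0\rangle,\langle d_n'\rangle$ are elliptic. (3) Form the splitting $\Lambda$ of $G_n$ obtained from $\Lambda_S$ by attaching each $\langle t_j\mid\rangle$ as a separate free (trivially-amalgamated) vertex; equivalently, refine the free-product splitting $G_n=S*\langle t_0\rangle*\dots*\langle t_{n-1}\rangle$ by the splitting $\Lambda_S$ of the factor $S$. This $\Lambda$ is still a malnormal cyclic splitting of $G_n$; $c_0$ is elliptic (it lies in an elliptic vertex group of $\Lambda_S$), $d_n$ is elliptic (it is conjugate in $G_n$ to $d_n'$, which is elliptic in $\Lambda_S$), and $g$ is hyperbolic, since a conjugate of it, $g_0$, is hyperbolic already in the subgraph $\Lambda_S$ and the ambient graph only adds more edges. (4) In the remaining case $g$ is conjugate to a power of some $t_j$: then $g$ is already hyperbolic in the free splitting of Lemma~\ref{lemma-surface}(1) with $S$ (hence $c_0$ and $d_n$) elliptic, and one may even pass to the refinement by any $\Lambda_S$ as above to uniformize the statement; alternatively this case is subsumed by choosing the splitting to be the free product splitting itself.

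The main obstacle I expect is the careful bookkeeping in step~(2)--(3): one must make sure that ``not conjugate in $G_n$ to a power of $c_0$ or $d_n$'' really does translate into ``not conjugate in $S$ to a boundary element,'' which requires knowing that conjugacy of elements of $S$ inside the larger group $G_n$ is controlled by conjugacy inside $S$ together with the free-product structure (again Kurosh / the normal form for free products). Once that translation is in hand, the verification that ellipticity and hyperbolicity are inherited under refining a free-product splitting is routine Bass--Serre theory, and malnormality of the cyclic edge groups is preserved because the new edges carry trivial groups and $\Lambda_S$ was already malnormal. I would also remark that the corollary as stated only asserts existence of \emph{some} such splitting, so we are free to pick the most convenient one in each case.
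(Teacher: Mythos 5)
Your handling of the case $g$ conjugate into $S$ is exactly the paper's argument: translate the hypothesis into ``$g$ is not conjugate in $S$ to a boundary element'' via normal forms for free products, apply Lemma~\ref{split-surface} to $S$, and refine the decomposition $G_n=S*\langle t_0\rangle*\dots*\langle t_{n-1}\rangle$ accordingly. That part is fine. (A small omission in your step (1): Kurosh gives a third possibility, namely that $\langle g\rangle$ is not conjugate into any factor at all; but then $g$ is already hyperbolic in the free-product splitting itself, so nothing is lost.)

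The genuine gap is in your step (4). If $g$ is conjugate to a power of some $t_j$, then $g$ is conjugate into the vertex group $\langle t_j\rangle$ of the free-product splitting, and hence $g$ is \emph{elliptic} there, not hyperbolic -- in Bass--Serre terms, an element is hyperbolic precisely when it is not conjugate into any vertex group, and $t_j^m$ visibly is. Refining the splitting of the factor $S$ does not help, since the vertex $\langle t_j\rangle$ is untouched; so neither of your two suggested ways of disposing of this case works. The paper handles it by a separate trick: for $i\neq 0$ one rewrites $\langle t_0\rangle*\langle t_i\rangle$ as the HNN extension $\langle\langle t_i,\,t_0t_it_0^{-1}\rangle,\,t_0\mid (t_0t_it_0^{-1})^{t_0}=t_i\rangle$, a cyclic splitting in which $t_0$ is the stable letter and therefore hyperbolic; substituting this for $\langle t_0\rangle*\langle t_i\rangle$ inside the decomposition of Lemma~\ref{lemma-surface} gives a malnormal cyclic splitting of $G_n$ in which $g$ is hyperbolic while $S$ (hence $c_0$ and $d_n$) remains a vertex group, so elliptic. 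This is where the hypothesis $n\geq 2$ is used (one needs at least two of the $t_j$'s available). You need to supply some such argument for this case; as written, your proof does not cover it.
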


\begin{proof} Write $g=g_1 \cdots g_k$ in normal form with respect to the splitting appearing in Lemma~\ref{lemma-surface}.  W.l.o.g, we may assume that $g$ is cyclically reduced. If $k \geq 2$, then $g$ is hyperbolic in the given malnormal splitting. Hence we may assume that $g \in S$ or $g \in \<t_i\mid\>$ for some $i\leq n-1$.

If $g \in S$, by Lemma~\ref{split-surface}, there exists a malnormal cyclic splitting $S$  in which $g$ is hyperbolic and $c_0$ and $d_n$ are elliptic. This yields a refinement of the cyclic splitting of $G_n$ in Lemma~\ref{lemma-surface} in which $g$ is hyperbolic.

Next suppose that $g \in \<t_0\mid \>$. For any $1\leq i<n$ we can write 
$\<t_0, t_i\mid \>=\<\<t_i, t_0t_it_0^{-1}\mid \>, t_0\mid (t_0t_it_0^{-1})^{t_0}=t_i\>$ which is a cyclic splitting $\Delta$ in which $t_0$ is hyperbolic. By replacing  $\<t_0\mid \>*\< t_i\mid \>$ by $\Delta$
in the splitting given in Lemma~\ref{lemma-surface} we obtain a cyclic splitting of $G_n$ in which $g$ is hyperbolic. If $g \in \<t_i\mid \>$ for $1\leq i<n$ the same proof works and we are done.
\end{proof}

Recall that for $i\geq 0$, $\overline h_i=(a_i,b_i,c_i)$. Having diposed  by some  needed  properties of $G_n$ in the previous lemmas, we  are now ready  to  show that the sequence $\overline h_0,\overline h_2,\ldots, \overline h_{2n}$ satisfies  conditions of Definition \ref{def-ample}.  Since $G_i$ is a free factor of $G_k$ for $0 \leq i \leq k$ which  implies that $G_i$ is an elementary subgroup of $G_k$, in computing the algberaic closure -- as well as the imganiary algebraic closure -- of tuples of $G_i$ it is enough to work in $G_i$.

\begin{lemma}\label{lemma-acl} For $0\leq i<k$ we have 
 \[G_{2i}\cap \acl(\overline h_0, \overline h_2, \dots, \overline h_{2i}, \overline h_{2k})=\acl(\overline h_0, \overline h_2, \dots, \overline h_{2i})
\]

and 
 \[G_{2i}^c\cap \acl^c(\overline h_0, \overline h_2, \dots, \overline h_{2i}, \overline h_{2k})=\acl^c(\overline h_0, \overline h_2, \dots, \overline h_{2i}).
\]

\end{lemma}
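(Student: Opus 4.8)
The plan is to work inside the free group $G_{2k}$, using the fact that $G_{2i}$ is a free factor of $G_{2k}$ (Lemma~\ref{l-freefactor}), so in particular $G_{2i}\preceq G_{2k}$. Write $B_i=\<\overline h_0,\overline h_2,\dots,\overline h_{2i}\>$ and $B=\<\overline h_0,\dots,\overline h_{2i},\overline h_{2k}\>$. The inclusion $\acl(\overline h_0,\dots,\overline h_{2i})\subseteq G_{2i}\cap\acl(\overline h_0,\dots,\overline h_{2i},\overline h_{2k})$ is trivial. For the reverse inclusion I would first identify $\acl(B_i)$ explicitly via Theorem~\ref{theorem-acl}: since $B_i$ is nonabelian and (by the construction) $G_{2i}$ is freely $B_i$-indecomposable — this needs to be checked, but follows because $B_i$ generates $G_{2i}$ up to the free factors $\<t_j\mid\>$, and $\overline h_0,\overline h_2,\dots,\overline h_{2i}$ together with the surface relations pin down everything — $\acl_{G_{2i}}(B_i)$ is the vertex group containing $B_i$ in the malnormal cyclic JSJ-decomposition of $G_{2i}$ relative to $B_i$. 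In fact one expects $\acl(B_i)=G_{2i}$ itself (or a cyclic-free-factor extension thereof), since $B_i$ already generates $G_{2i}$ together with primitive elements; this identification is the technical heart.

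Next, for the algebraic-closure statement: suppose $g\in G_{2i}\cap\acl(B)$. Since $G_{2i}\preceq G_{2k}$ and $\acl$ does not depend on the ambient elementary extension, it suffices to show $g\in\acl(B_i)$. For this I would produce, using Corollary~\ref{cor-splitting} applied to $G_{2k}$ (which realizes $G_{2k}$ — modulo the $\<t_j\mid\>$ free factors — as a surface group $S$ with boundary subgroups generated by $c_0$ and $d_{2k}$), many automorphisms in $Aut_B(G_{2k})$ moving $g$ to infinitely many non-algebraic conjugates whenever $g\notin\acl(B_i)$. Concretely: the tuple $\overline h_0,\dots,\overline h_{2i}$ lies in $S$ and is contained in a rigid (or surface) piece away from the boundary generated by $d_{2k}$; an element $g\in G_{2i}$ not in $\acl(B_i)$ is not conjugate to a power of $c_0$ or $d_{2k}$ (this uses that $g\in G_{2i}$, and $c_0\in G_{2i}$ while $d_{2k}\notin G_{2i}$, plus that $g$ is not algebraic), so by Corollary~\ref{cor-splitting} there is a malnormal cyclic splitting of $G_{2k}$ relative to $B$ — here I need $B$ to be elliptic, which holds since $B_i\subseteq S$ and $\overline h_{2k}$ involves $c_{2k},a_{2k},b_{2k}$ which are elliptic in that surface splitting — in which $g$ is hyperbolic. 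Then Proposition~\ref{prop-autos} yields infinitely many $f_n\in Aut_B(G_{2k})$ with the $f_n(g)$ pairwise non-conjugate (and distinct centralizers), so $g\notin\acl(B)\supseteq$ the $\acleq$; each such $f_n$ extends over the $\<t_j\mid\>$ factors since it acts on boundary subgroups by conjugation. Contradiction.

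For the conjugacy-class statement, I would argue entirely analogously. Take $g^{G_{2i}}\in G_{2i}^c\cap\acl^c(B)$; then $g^{G_{2k}}\in\acl_{G_{2k}}^c(B)$, and by Proposition~\ref{prop1} (equivalences $(1)\Leftrightarrow(3)\Leftrightarrow(4)$) $g$ is malnormally universally elliptic relative to $B$, hence in a generalized malnormal cyclic JSJ-decomposition of $G_{2k}$ relative to $B$ the element $g$ is conjugate into the elliptic abelian neighborhood of a rigid vertex group or into a boundary subgroup of a surface vertex group. Using the surface description from Lemma~\ref{lemma-surface} together with Lemma~\ref{split-surface}, one checks that the only conjugacy classes that survive in $\acl^c(B)$ but become "visible" from $G_{2i}$ are exactly those conjugate (in $G_{2i}$) to elements of $\acl^c(B_i)$ — again applying Proposition~\ref{prop1} now inside $G_{2i}$, where $g$ being malnormally universally elliptic relative to $B$ forces it to be so relative to $B_i$ as well, since any malnormal cyclic splitting of $G_{2i}$ relative to $B_i$ extends to one of $G_{2k}$ relative to $B$ by adjoining the free factors $H_{2i+1}*\dots*H_{2k}$ and the stable letters. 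Conversely a malnormal abelian splitting of $G_{2k}$ relative to $B$ in which $g\in G_{2i}$ is hyperbolic restricts (via Corollary~\ref{cor-splitting}) to a witness of non-ellipticity in $G_{2i}$.

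The main obstacle I expect is the bookkeeping around ellipticity of the full parameter tuple $B=\<\overline h_0,\dots,\overline h_{2i},\overline h_{2k}\>$ in the surface splitting of $G_{2k}$: one must verify that $\overline h_{2k}$, which lives in the "far end" of the chain, is simultaneously elliptic with $\overline h_0,\dots,\overline h_{2i}$ in a common cyclic splitting in which a given non-algebraic $g\in G_{2i}$ is hyperbolic, and that the splitting can be taken malnormal so that Proposition~\ref{prop-autos} applies. This is where the precise surface picture from Lemma~\ref{lemma-surface} and Corollary~\ref{cor-splitting} — with boundary subgroups $\<c_0\>$ and $\<d_{2k}\>$ separating the $\overline h_{2j}$ from each other along disjoint simple closed curves — does the work, and I would spell out that a simple closed curve separating $G_{2i}$-data from $\overline h_{2k}$-data exists and cuts the surface into pieces of large enough complexity to reapply Lemma~\ref{split-surface}.
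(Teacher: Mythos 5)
There is a genuine gap here, in fact several. The most serious one is structural: your argument runs through ellipticity in splittings (Corollary~\ref{cor-splitting}, Proposition~\ref{prop-autos}, Proposition~\ref{prop1}), but that machinery only detects conjugacy classes and centralizers. It cannot prove the first displayed equality about plain $\acl$. For example, take $h\in H_0$ and $x\in G_{2i}\setminus\acl(\overline h_0,\dots,\overline h_{2i})$; then $g=h^x$ is elliptic in \emph{every} splitting of $G_{2k}$ relative to the parameters (it is conjugate into a vertex group containing $H_0$), yet $g\notin\acl(\overline h_0,\dots,\overline h_{2i})$. No splitting in which $g$ is hyperbolic exists, so your strategy produces nothing for such $g$. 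Relatedly, your use of Corollary~\ref{cor-splitting} is unsound: that corollary makes no claim that the resulting splitting is relative to $B$, and membership of $B_i$ in the surface group $S$ does not make $B_i$ elliptic in a splitting of $S$ dual to a simple closed curve. Indeed, applying your criterion to $g=a_0$ (which is not conjugate to a power of $c_0$ or $d_{2k}$) would ``show'' $a_0\notin\acl(B)$, which is absurd. Two further incorrect assertions: $G_{2i}$ is \emph{not} freely $B_i$-indecomposable (Lemma~\ref{lemma-surface} exhibits $G_{2i}=S*\<t_0\>*\cdots$ with $B_i$ conjugate into $S$), and $\acl(B_i)$ is not $G_{2i}$ but $H_0*H_2*\cdots*H_{2i}$ --- which is the content of the \emph{next} lemma, whose proof uses the present one; no identification of $\acl(B_i)$ is needed here.

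The paper's proof goes the other way around and is much more direct: assuming $g\in G_{2i}$ is \emph{not} algebraic (resp.\ its conjugacy class is not algebraic) over $\overline h_0,\dots,\overline h_{2i}$, one takes infinitely many automorphisms of $G_{2i}$ fixing these tuples and moving $g$ to pairwise distinct (resp.\ pairwise non-conjugate) images, extends them by the identity on $H_{2i+1}*\cdots*H_{2k}$ and then to the HNN extension $G_{2k}$, so that they also fix $\overline h_{2k}$; this immediately contradicts $g\in\acl(B)$. The only delicate point is that non-conjugacy in the free product could collapse in the HNN extension, and this is handled by the conjugacy criterion \cite[Lemma 3.1]{hom-ould}, which forces any such collapse to identify a pair $(f_p(g),f_q(g))$ with a pair $(d_j^r,c_{j+1}^r)$ --- impossible. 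Your proposal never addresses this persistence of non-conjugacy across the HNN extension, and your claim that every malnormal cyclic splitting of $G_{2i}$ relative to $B_i$ extends to $G_{2k}$ relative to $B$ also fails in general, since such a splitting need not keep $d_{2i}$ elliptic.
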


\begin{proof}
 Since $G_{2k}$ is a free factor we may work in $G_{2k}$. Let $g \in G_{2i}\cap \acl(\overline h_0, \overline h_2, \dots, \overline h_{2i}, \overline h_{2k})$ and suppose towards a contradiction that $g\in  G_{2i}\setminus \acl(\overline h_0, \overline h_2, \dots, \overline h_{2i})$.  
Then there exist infinitely many automorphisms $f_p \in Aut(G_{2i})$ which fix $\overline h_0, \overline h_2, \dots, \overline h_{2i}$ such that $f_p(g) \neq f_q(g)$ for $p \neq q$.  Recall that 
\[G_{2k}=\<G_{2i}* H_{2i+1}*\ldots *H_{2k-1}*H_{2k},t_j, 2i\leq j\leq 2k-1\mid d_j^{t_j}=c_{j+1} \>. 
\]

We extend each $f_p$ to $G_{2i}*H_{2i+1}*\ldots *H_{2k-1}*H_{2k}$ by the identity on the factor $H_{2i+1}*\ldots *H_{2k-1}*H_{2k}$.  Hence each $f_p$ has a natural extension to $G_{2k}$ that we denote by $\hat f_p$ and such that $\hat f_p(g) \neq \hat f_q(g)$ for $p\neq q$. We see that each $f_p$ fixes $\overline h_0, \overline h_2, \dots, \overline h_{2i}, \overline h_{2k}$ and $\hat f_p(g) \neq \hat f_q(g)$  for $p \neq q$.  Therefore $g \not \in \acl(\overline h_0, \overline h_2, \dots, \overline h_{2i}, \overline h_{2k})$; which is a contradiction.

Similarly if $g^{G_{2i}}\in  G_{2i}^c\setminus \acl^c(\overline h_0, \overline h_2, \dots, \overline h_{2i})$ there exist infinitely many automorphisms $f_p \in Aut(G_{2i})$ which fix $\overline h_0, \overline h_2, \dots, \overline h_{2i}$ such that $f_p(g)$ and $f_q(g)$ are not conjugate for $p \neq q$.  We extend  these $f_p$ to $G_{2k}$ as in the previous paragraph.  Clearly $\hat f_p(g)$ and $\hat f_q(g)$ are not conjugate in $G_{2i}* H_{2i+1}*\ldots *H_{2k-1}*H_{2k}$.   Suppose towards a contradiction that $\{\hat f_p(g) \mid p \in \mathbb N\}$ is finite up to conjugacy in $G_{2k}$. By applying  \cite[Lemma 3.1]{hom-ould} it follows that for $p \neq q$ the pair  $(f_p(g), f_q(g))$ is conjugate in $G_{2i}* H_{2i+1}*\ldots *H_{2k-1}*H_{2k}$ to a  one of the pairs $(d_j^r, c_{j+1}^r)$ for  $2i\leq j\leq 2k-1$ and $r \in \mathbb Z$; this is clearly a contradiction.
By Proposition \ref{prop1}  $g^{G_{2k}} \not \in \acl^c(\overline h_0, \overline h_2, \dots, \overline h_{2i}, \overline h_{2k})$; which is a contradiction.
\end{proof}

\begin{lemma}\label{l-aclH} For $0\leq i\leq k$ we have 
$$
\acl(\overline h_0, \overline h_2, \dots, \overline h_{2i}, \overline h_{2k}) = H_0*H_2*\dots *H_{2i} *H_{2k}  \leqno (i)
$$
and
$$
\acl^c(\overline h_0, \overline h_2, \dots, \overline h_{2i}, \overline h_{2k})=\acl(\overline h_0, \overline h_2, \dots, \overline h_{2i}, \overline h_{2k})^c \leqno (ii)
$$
\end{lemma}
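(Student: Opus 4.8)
First I would establish $(i)$ by induction on $i$. For the base case $i=0$ we must compute $\acl(\overline h_0,\overline h_{2k})$ in $G_{2k}$. The subgroup $A=\langle\overline h_0,\overline h_{2k}\rangle$ is exactly $H_0*H_{2k}$, and it is clearly nonabelian. The plan is to exhibit a malnormal cyclic splitting of $G_{2k}$ in which $A=H_0*H_{2k}$ sits inside a vertex group whose algebraic closure is $A$ itself; by Theorem~\ref{theorem-acl} the algebraic closure of $A$ is the vertex group containing $A$ in the generalized malnormal cyclic JSJ-decomposition relative to $A$. Concretely, $G_{2k}$ decomposes as a free product of $H_0$, the intermediate pieces, and $H_{2k}$, amalgamated along the cyclic boundary relations $d_j^{t_j}=c_{j+1}$; since $H_0$ and $H_{2k}$ are free factors (Lemma~\ref{l-freefactor}) and $A$ is freely indecomposable relative to nothing extra, one checks that no surface or abelian vertex group can absorb more of $G_{2k}$ than $A$ itself, because the boundary elements $c_0,d_{2k}$ are exactly the elements through which $H_0$ and $H_{2k}$ are attached to the rest. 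The inductive step is identical in spirit: $A_i=H_0*H_2*\dots*H_{2i}*H_{2k}$ is the appropriate vertex group, using the Remark that $G_{2k}=\langle G_{2i}*H_{2i+1}*\dots*H_{2k},t_j\mid d_j^{t_j}=c_{j+1}\rangle$ together with Lemma~\ref{lemma-acl} to reduce the computation of $\acl$ in $G_{2k}$ of tuples from $G_{2i}$ to $\acl$ inside $G_{2i}$; and inside $G_{2i}$ the same splitting argument applies, noting that $\overline h_{2k}$ contributes the free factor $H_{2k}$ which by Grushko cannot merge with anything.

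For $(ii)$ — that the tuple $(\overline h_0,\overline h_2,\dots,\overline h_{2i},\overline h_{2k})$ represents conjugacy — I would use Proposition~\ref{prop1}. We must show that every conjugacy class in $\acl^c(\overline h_0,\dots,\overline h_{2i},\overline h_{2k})$ has a representative inside $\acl(\overline h_0,\dots,\overline h_{2i},\overline h_{2k})=H_0*H_2*\dots*H_{2i}*H_{2k}=:N$, and conversely that every conjugacy class with a representative in $N$ lies in $\acl^c$. The inclusion $N^c\supseteq\acl^c(\dots)$ is immediate from Corollary~\ref{cor-elliptic}: any element whose conjugacy class is in the imaginary algebraic closure of $A$ is malnormally universally elliptic relative to $A$, hence (by the second part of Lemma~\ref{lemma-acl}, already proved) its conjugacy class lies in $N^c$. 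For the reverse inclusion I would invoke criterion $(4)$ of Proposition~\ref{prop1}: in the generalized malnormal cyclic JSJ-decomposition of $G_{2k}$ (equivalently of $G_{2i}$, after the free-factor reduction) relative to $A$, the subgroup $N$ is exactly a rigid vertex group — it is itself freely $A$-indecomposable and rigid because it is built from the surface groups $H_j$ which, once their boundaries $c_j,d_j$ are the edge groups, become rigid vertices, and being the vertex group equal to $\acl(A)$ it is the rigid vertex. Hence every element of $N$ is elliptic in every malnormal abelian splitting relative to $A$ and lies in (a conjugate of) a rigid vertex group, so by $(4)\Rightarrow(1)$ its conjugacy class is in $\acl^c(A)$.

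The main obstacle I anticipate is the precise identification of the JSJ-decomposition of $G_{2k}$ (or $G_{2i}$) relative to $A=H_0*H_2*\dots*H_{2i}*H_{2k}$, and in particular verifying that $N$ is genuinely a \emph{rigid} vertex group rather than being part of a larger surface vertex group: the surfaces $H_j$ themselves carry further splittings (by Lemma~\ref{split-surface}), so one must argue carefully that relative to $A$ — which contains full generating tuples $\overline h_j$ of the even-indexed $H_j$ — these internal surface splittings are killed, forcing $H_0,H_2,\dots,H_{2i}$ to be rigid, while the attaching cyclic subgroups along the $t_j$ provide the only remaining edges. This is where I would lean on the fact that $A$ is freely indecomposable and on the malnormality of the boundary subgroups in the surface groups, together with Corollary~\ref{cor-splitting} applied in the odd-indexed pieces. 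Once this structural picture is nailed down, both $(i)$ and $(ii)$ follow by the bookkeeping indicated above.
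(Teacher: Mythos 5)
For part $(i)$ your plan is more complicated than necessary and not fully carried out: you appeal to Theorem~\ref{theorem-acl}, which would force you to actually identify the generalized malnormal cyclic JSJ-decomposition of $G_{2k}$ relative to $A$, and you never do this. The paper avoids the JSJ entirely: regroup the defining splitting as $G_{2k}=\langle N*K,\ t_j\mid d_j^{t_j}=c_{j+1}\rangle$ with $N=H_0*H_2*\cdots*H_{2i}*H_{2k}$ a single vertex group and $K$ the free product of the remaining $H_j$; this is a malnormal cyclic splitting, so Proposition~\ref{prop-spli-acl} gives $\acl(N)=N$, and since the tuple generates $N$ this is already $(i)$. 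Note that the splitting you describe (free product of $H_0$, the intermediate pieces and $H_{2k}$, amalgamated along the boundary relations) has $H_0$ and $H_{2k}$ as \emph{separate} vertex groups; the regrouping into one vertex group is the point, and it is missing from your write-up.

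The genuine gap is in part $(ii)$, in the inclusion $\acl^c(A)\subseteq N^c$, which you declare ``immediate from Corollary~\ref{cor-elliptic}'' together with Lemma~\ref{lemma-acl}. Corollary~\ref{cor-elliptic} only tells you that a representative $c$ is elliptic in every malnormal abelian splitting relative to $A$; it does not tell you that $c$ is conjugate into $N$. There are plenty of elliptic elements outside every conjugate of $N$ (for instance elements of the complementary factor, which are elliptic in the splitting above), and Lemma~\ref{lemma-acl} only treats conjugacy classes admitting a representative in $G_{2i}$, not arbitrary elements of $G_{2k}$. Closing this gap is precisely the bulk of the paper's proof: one takes the splitting with vertex groups $K=G_{2i}*H_{2k}$ and $L\cong G_{2(k-i-1)}$, and (a) for $c$ conjugate into $L$, uses Corollary~\ref{cor-splitting} (or Lemma~\ref{split-surface} when $L=H_{2i+1}$) to refine the splitting and make $c$ hyperbolic unless $c$ is conjugate to a power of $c_{2i+1}$ or $d_{2k-1}$, which are conjugate into $H_{2i}$ and $H_{2k}$; and (b) for $c$ conjugate into $K$ with cyclically reduced normal form $g_1\cdots g_m$, shows each letter $g_l\in G_{2i}$ lies in $\acl(\overline h_0,\dots,\overline h_{2i})$ by producing, otherwise, infinitely many automorphisms of $K$ fixing the tuples whose images of $c$ are pairwise non-conjugate in $G_{2k}$ (the passage from non-conjugacy in $K$ to non-conjugacy in $G_{2k}$ itself requires a normal-form argument via \cite[Lemma 3.1]{hom-ould}), contradicting Proposition~\ref{prop1}. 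None of this is present in your proposal, and your alternative suggestion of reading the answer off a rigid vertex group of the JSJ relative to $A$ is asserted rather than established. The reverse inclusion $N^c\subseteq\acl^c(A)$, by contrast, really is the easy direction (finite $\mathrm{Aut}_A$-orbit plus Proposition~\ref{prop1}$(2)\Rightarrow(1)$), so your effort there is pointed in the wrong direction.
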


\begin{proof}   We first prove $(i)$. By rewriting the splitting 
\[G_{2k}=\<H_0*H_1*\cdots *H_{2k-1}*H_{2k},  t_i, 0 \leq i \leq 2k-1\mid d_i^{t_i}=c_{i+1}\>,\]
as 
\[G_{2k}=\<(H_0*H_2*\dots *H_{2i} *H_{2k})*K,  t_i, 0 \leq i \leq 2k-1\mid d_i^{t_i}=c_{i+1}\>,\]
where $K$ is the free product of the remaining of the $H_i$, we get a malnormal cyclic splitting in which $H_0*H_2*\dots *H_{2i} *H_{2k}$ is a vertex group. Hence by  Proposition \ref{prop-spli-acl}
\[\acl(\overline h_0, \overline h_2, \dots, \overline h_{2i}, \overline h_{2k}) = H_0*H_2*\dots *H_{2i} *H_{2k}\]
as required. 

We prove  $(ii)$. We assume inductively that
\[
\acl^c(\overline h_0, \overline h_2, \dots, \overline h_{2i}, \overline h_{2k})=\acl(\overline h_0, \overline h_2, \dots, \overline h_{2i}, \overline h_{2k})^c.  
\]

Let  $c^{G_{2k}} \in \acl^c(\overline h_0, \overline h_2, \dots, \overline h_{2i}, \overline h_{2k})$ and consider the malnormal cyclic splitting of $G_{2k}$ with vertex groups $K= (G_{2i}*H_{2k})$ and $L=\<H_{2i+1}*\ldots *H_{2k-1},t_j, 2i+1\leq j\leq 2(k-1)\mid d_j^{t_j}=c_{j+1} \>$
given by
\[G_{2k}=\< K*L,  t_j, j=2i, 2k-1\mid d_j^{t_j}=c_{j+1} \>.  
\] 
By Corollary~\ref{cor-elliptic}, $c$ is in a conjugate of $K$ or of $L$. 
Suppose first that $c$ is in a conjugate of $L$ and thus w.l.o.g, we may assume that $c \in L$. Note that $L\cong G_{2m}$ with $m=k-i-1$. By Corollary~\ref{cor-splitting} when $2m \geq 2$ and Lemma \ref{split-surface} when $2m=0$, if $c$ is not conjugate to an element of $\<c_{2i+1}\>$ or   $\<d_{2k-1}\>$ then $L$ has a malnormal cyclic splitting such that $c_{2i+1}$ and $d_{2k-1}$  are elliptic and $c$ is hyperbolic.  Hence we get a refinement of the previous splitting of $G_{2k}$ where $K$ is still a vertex group and such that $c$ is hyperbolic, contradicting  Corollary~\ref{cor-elliptic}. 

Therefore $c$ is either conjugate to an element of $\<c_{2i+1}\>$ or  of $\<d_{2k-1}\>$. But since  $d_{2k-1}^{t_{2k-1}}=c_{2k}, d_{2i}^{t_{2i}}=c_{2i+1}$ we conclude that either $c^{G_{2k}} \in \acl(\bar h_{2i})^c=H_{2i}^c$ or $c^{G_{2k}} \in \acl(\bar h_{2k})^c=H_{2k}^c$ and hence
$c^{G_{2k}} \in  \acl(\overline h_0, \overline h_2, \dots, \overline h_{2i}, \overline h_{2k})^c $ as required.

Suppose now that $c$ is in a conjugate of $K$ and thus w.l.o.g, we may assume that $c \in K$.  Write $c=g_1\cdots g_m$ in normal form with respect to the free product structure of $K=G_{2i}*H_{2k}$.  Since any element is conjugate to a cyclically reduced one, w.l.o.g, we may assume that $c$ is cyclically reduced. 
If $m=1$, then $c\in H_{2k}$ or $c\in G_{2i}$ and hence $c^{G_{2i}}\in\acl(h_{2k})^c$
or $c^{G_{2i}} \in  \acl(\overline h_0, \overline h_2, \dots, \overline h_{2i})^c $ by
Lemma~\ref{lemma-acl} and induction hypothesis. If $m>1$, we claim that for any $1\leq l\leq m$ if $g_l \in G_{2i}$ then $g_l \in \acl(\overline h_0, \overline h_2, \dots, \overline h_{2i})$ and so $c \in \acl(\overline h_0, \overline h_2, \dots, \overline h_{2i}, \overline h_{2k})$.  Suppose towards a contradiction that $g_l \in  G_{2i}\setminus \acl(\overline h_0, \overline h_2, \dots, \overline h_{2i})$ for some $1\leq l\leq m$.  Then proceeding as in the proof of  Lemma~\ref{lemma-acl} there exist infinitely many automorphisms $f_p \in Aut(K)$ fixing $\overline h_0, \overline h_2, \dots, \overline h_{2i}, \overline h_{2k}$ and with $f_p(c)\neq f_q(c)$ for $p\neq q$.  Each $f_p$ has a natural extension to $G_{2k}$ that we denote by $\hat f_p$ and such that $\hat f_p(c) \neq \hat f_q(c)$ for $p\neq q$.

Suppose that the set $\{f_p(c) \mid p \in \mathbb N\}$ is finite up to conjugacy in $K$. Hence there exists an infinite set $I \subseteq \mathbb N$ and $p_0$ such that $f_p(c)$ is  conjugate to $f_{p_0}(c)$ for every $p \in I$. We see that $f_p(c)=f_p(g_1) \cdots f_p(g_k)$ and $(f_p(g_1), \dots, f_p(g_k))$ is a normal form and $f_p(c)=f_p(g_1) \cdots f_p(g_k)$ is cyclically reduced. Therefore $(f_p(g_1), \dots, f_p(g_k))$ is a cyclic permutation of $(f_{p_0}(g_1), \dots, f_{p_0}(g_k))$. Since the number of such cyclic permutations is finite, we conclude that there exists $p \neq q \in I$ such that $f_p(c)=f_q(c)$ and thus $f_p(g_l)=f_q(g_l)$, a contradiction.

Hence the set $\{f_p(c) \mid p \in \mathbb N\}$ is infinite up to conjugacy in $K$.  Suppose towards a contradiction that $\{\hat f_p(c) \mid p \in \mathbb N\}$ is finite up to conjugacy in $G_{2k}$. Since $\{\hat f_p(c) \mid p \in \mathbb N\}$  is infinite up to conjugacy in $K$, we conclude that there exists $p\neq q$ such that $f_p(g)$ is conjugate to $f_q(c)$ in $G_{2k}$ and  $f_p(c)$ is not conjugate to $f_q(c)$ in $K*L$. By applying \cite[Lemma 3.1]{hom-ould} it follows  that the pair  $(f_p(c), f_q(c))$ is conjugate in  $K*L$ to a  one of the pairs  $(d_{2i}^r, c_{2i+1}^r), (d_{2k-1}^r, c_{2k}^r)$ for some $r \in \mathbb Z$; this is clearly a contradiction as $f_p(c), f_q(c) \in K$. Hence the set $\{\hat f_p(c) \mid p \in \mathbb N\}$ is infinite up to conjugacy in $G_{2k}$ and by Proposition \ref{prop1}  $c^{G_{2k}} \not \in \acl^c(\overline h_0, \overline h_2, \dots, \overline h_{2i}, \overline h_{2k})$; which is a contradiction.
\end{proof}

\begin{lemma} We have 

\[
\acl (\bar h_{0}) \cap \acl (\bar h_{2})=\acl(\emptyset) 
\] 
and
\[
\acl^c(\bar h_{0}) \cap \acl^c(\bar h_{2})=\acl^c(\emptyset). 
\]

\end{lemma}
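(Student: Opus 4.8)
The plan is to compute $\acl(\bar h_0),\acl(\bar h_2),\acl^c(\bar h_0),\acl^c(\bar h_2)$ separately and then intersect them inside the free decomposition $G_{2n}=H_0*H_2*(\cdots)$. Since $G_2$ is a free factor of $G_{2n}$, hence an elementary subgroup, everything may be computed in $G_{2n}$; recall also that, $G_{2n}$ being a nonabelian free group, $\acl(\emptyset)=1$ and $\acl^c(\emptyset)=1^c$ (the latter by Remark~\ref{rem2}).

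First I would note that $\acl(\bar h_0)=H_0$ and $\acl(\bar h_2)=H_2$: by Lemma~\ref{l-freefactor} each $H_i$ is a nonabelian free factor of $G_{2n}$, so writing $G_{2n}$ as the free product of $H_i$ with a complementary free factor exhibits $H_i$ as a vertex group of a malnormal cyclic splitting of $G_{2n}$, and Proposition~\ref{prop-spli-acl} (or Theorem~\ref{theorem-acl}) applies. Since $H_0$ and $H_2$ are distinct free factors in $G_{2n}=H_0*H_2*(\cdots)$ we get $\acl(\bar h_0)\cap\acl(\bar h_2)=H_0\cap H_2=1$; and $\acl(\emptyset)\subseteq\acl(a_0)\cap\acl(b_0)=\langle a_0\rangle\cap\langle b_0\rangle=1$ (again by Proposition~\ref{prop-spli-acl}, as $a_0,b_0$ generate cyclic free factors), giving the first equality.

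For the conjugacy classes I would show $\acl^c(\bar h_0)=H_0^c$ and $\acl^c(\bar h_2)=H_2^c$ (conjugacy in $G_{2n}$). The inclusions $H_i^c\subseteq\acl^c(\bar h_i)$ are clear since $H_i=\acl(\bar h_i)$ and a conjugacy class lies in the definable closure of any of its representatives. For the converse, if $c^{G_{2n}}\in\acl^c(\bar h_0)$ then by Proposition~\ref{prop1} (which applies since $H_0$ is nonabelian) $c$ satisfies condition~$(4)$ for a generalized malnormal cyclic JSJ-decomposition of $G_{2n}$ relative to $H_0$; since $H_0$ is a free factor, such a decomposition has $H_0$ as its unique rigid vertex group (a surface group admits no nontrivial splitting relative to the whole of itself) and its complement as the free factor vertex, and the elliptic abelian neighbourhood of $H_0$ is $H_0$ itself ($H_0$ being malnormal and nonabelian). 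As $c$ cannot be conjugate into the free factor of the decomposition, condition~$(4)$ forces $c$ to be conjugate into $H_0$; the same argument works for $\bar h_2$ (which also generates a free factor). Finally, if $g^{G_{2n}}\in H_0^c\cap H_2^c$ then a representative of $g$ in $H_0$ is conjugate in $G_{2n}=H_0*H_2*(\cdots)$ to one in $H_2$, and by the conjugacy theorem for free products two elements of distinct free factors are conjugate only if both are trivial; hence $g^{G_{2n}}=1^{G_{2n}}$ and $\acl^c(\bar h_0)\cap\acl^c(\bar h_2)=1^c=\acl^c(\emptyset)$. The only step needing real care is the equality $\acl^c(\bar h_0)=H_0^c$, i.e.\ that $\bar h_0$ represents conjugacy; everything else is routine, and this too could alternatively be obtained directly by the automorphism argument of the proof of Lemma~\ref{l-aclH}$(ii)$.
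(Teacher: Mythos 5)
There is a genuine gap: the free decomposition $G_{2n}=H_0*H_2*(\cdots)$ on which your final step rests does not exist. While each $H_i$ individually is a free factor of $G_{2n}$ (Lemma~\ref{l-freefactor}), the subgroup $H_0*H_2$ is not: in the abelianization the relations $c_id_i[a_i,b_i]=1$ and $d_i^{t_i}=c_{i+1}$ give $\bar c_{i+1}=-\bar c_i$, hence $\bar c_2=\bar c_0$, so the six generators $a_0,b_0,c_0,a_2,b_2,c_2$ of the rank-$6$ free group $H_0*H_2$ become linearly dependent in $G_{2n}^{ab}$ and cannot be part of a basis (this is exactly the abelianization argument of the last lemma of Section~4). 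Worse, if a decomposition $G_{2n}=H_0*(H_2*K)$ existed, Proposition~\ref{p-forkingindep} would give $\indep{\overline h_0}{}{\overline h_2}$, contradicting condition $(i)$ of ampleness for $n=1$ (equivalently the Remark after Theorem~\ref{thm-free}); the whole point of the construction is that $\overline h_0$ and $\overline h_2$ \emph{cannot} be separated by a free splitting. Consequently your appeal to the conjugacy theorem for free products to conclude $H_0^c\cap H_2^c=\{1^c\}$ is invalid: conjugacy in $G_{2n}$ is strictly coarser than conjugacy in $P_{2n}=H_0*H_1*\cdots$, and indeed nontrivial identifications do occur (e.g.\ $d_0\in H_0$ becomes conjugate to $c_1\in H_1$).

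The correct argument, which is what the paper does, works in $G_2$ viewed as the iterated HNN extension $\langle P_2,t_0,t_1\mid d_0^{t_0}=c_1,\ d_1^{t_1}=c_2\rangle$ over $P_2=H_0*H_1*H_2$. If $\alpha\in H_0$ and $\beta\in H_2$ are conjugate in $G_2$ with $\alpha\neq 1$, they are not conjugate in $P_2$ (there the free-product conjugacy theorem does apply), so by the conjugacy criterion for HNN extensions (\cite[Lemma 3.1]{hom-ould}) the pair $(\alpha,\beta)$ must be conjugate in $P_2$ to one of $(d_0^r,c_1^r)$, $(c_1^r,d_0^r)$, $(d_1^r,c_2^r)$, $(c_2^r,d_1^r)$; each such pair has a coordinate in $H_1$, which is incompatible with $\alpha\in H_0$, $\beta\in H_2$ nontrivial. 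Your computations $\acl(\overline h_i)=H_i$, $\acl^c(\overline h_i)=H_i^c$ and the triviality of $H_0\cap H_2$ (which holds because $H_0,H_2$ are distinct factors of the free product $P_{2n}$, a subgroup of $G_{2n}$ --- not because of a free decomposition of $G_{2n}$ itself) are fine; it is only the passage from ``conjugate in $G_{2n}$'' to ``conjugate in a free product containing $H_0$ and $H_2$ as distinct factors'' that needs the HNN analysis.
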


\begin{proof} As $G_2$ is a free factor, we work in $F=G_2$.  Let $a \in \acl (\bar h_{0}) \cap \acl (\bar h_{2})$. Since $\acl(\bar h_0)=H_0$, $\acl(\bar  h_2)=H_2$ and $a \in \acl(\bar h_0, \bar h_2)=H_0*H_2$ we conclude that $a=1$. We note  that $\acl(1)=\acl(\emptyset)$.

Let  $a^F  \in \acl^c(\bar h_{0}) \cap \acl^c(\bar h_{2})$. Therefore  $a^F\in H_{0}^c \cap H_{2}^c$.  Hence there exists $\a \in H_{0}, \b \in H_{2}$ such that $\a^F=\b^F=a^F$.  Suppose that $\a \neq 1$. Clearly  $\a$ and $\b$ are not conjugate in $P_{2}=H_0*H_1*H_2$. By applying \cite[Lemma 3.1]{hom-ould} it follows  that the pair  $(\alpha, \beta)$ is conjugate in  $P_2$ to a  one of the pairs  $(d_{0}^r, c_{1}^r), (c_{1}^r, d_{0}^r), (d_{1}^r, c_{2}^r), ( c_{2}^r, d_{1}^r)$ for some $r \in \mathbb Z$; this a contradiction.  Therefore  $a^F=1^F$. By Remark \ref{rem2} we have $\acl^c(\emptyset)=\acl^c(1)=\{1^F\}$ which concludes the proof. 
\end{proof}

\begin{lemma}  For $i\geq 0$ we have the following:
\[\acl(\overline h_0, \overline h_2, \dots, \overline h_{2(i-1)}, \overline h_{2i}) \cap \acl(\overline h_0, \overline h_2, \dots, \overline h_{2(i-1)}, \overline h_{2(i+1)})
=\acl\overline h_0, \overline h_2, \dots, \overline h_{2(i-1)})
\]
and 
\[
\acl^c(\overline h_0, \overline h_2, \dots, \overline h_{2(i-1)}, \overline h_{2i}) \cap \acl^c(\overline h_0, \overline h_2, \dots, \overline h_{2(i-1)}, \overline h_{2(i+1)})=\acl^c(\overline h_0, \overline h_2, \dots, \overline h_{2(i-1)}).
\]

\end{lemma}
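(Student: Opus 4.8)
The plan is to reduce the statement to an elementary computation inside a free product, using the explicit description of the algebraic closures furnished by the preceding lemma (Lemma~\ref{l-aclH}). When $i=0$ the tuple $\overline h_0,\dots,\overline h_{2(i-1)}$ is empty and the assertion is exactly the preceding lemma, so I assume $i\geq 1$. Since $G_{2(i+1)}$ is a free factor, hence an elementary subgroup, of every $G_n$ with $n\geq 2(i+1)$, all the (imaginary) algebraic closures involved may be computed inside $G:=G_{2(i+1)}$; write $P:=P_{2(i+1)}=H_0*\cdots*H_{2(i+1)}$ and $N:=H_0*H_2*\cdots*H_{2(i-1)}$. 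By Lemma~\ref{l-aclH} we then have, as internal free products inside $G$, that $\acl(\overline h_0,\dots,\overline h_{2(i-1)})=N$, $\acl(\overline h_0,\dots,\overline h_{2(i-1)},\overline h_{2i})=N*H_{2i}$, $\acl(\overline h_0,\dots,\overline h_{2(i-1)},\overline h_{2(i+1)})=N*H_{2(i+1)}$ and $\acl(\overline h_0,\dots,\overline h_{2(i+1)})=N*H_{2i}*H_{2(i+1)}$, and the same lemma identifies each corresponding $\acl^c(\cdot)$ with the set of $G$-conjugacy classes of elements of the displayed subgroup. So the lemma reduces to the two identities $(N*H_{2i})\cap(N*H_{2(i+1)})=N$ and $(N*H_{2i})^c\cap(N*H_{2(i+1)})^c=N^c$, where $(\cdot)^c$ denotes conjugacy classes in $G$.

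The first identity I would get directly from uniqueness of normal forms in the free product $M:=N*H_{2i}*H_{2(i+1)}$: an element of $N*H_{2i}$ has all syllables of its $M$-normal form in $N\cup H_{2i}$ and an element of $N*H_{2(i+1)}$ has all syllables in $N\cup H_{2(i+1)}$, so an element of the intersection has all its syllables in $(N\cup H_{2i})\cap(N\cup H_{2(i+1)})=N$ (using that $N$, $H_{2i}$, $H_{2(i+1)}$ are pairwise trivially intersecting free factors of $M$) and hence lies in $N$. For the conjugacy identity, take $g\in G$ with $g^{G}$ in the intersection, so $g$ is conjugate in $G$ to some $\alpha\in N*H_{2i}$ and to some $\beta\in N*H_{2(i+1)}$, whence $\alpha,\beta\in P$ are conjugate in $G$. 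If $\alpha$ and $\beta$ are already conjugate in $P$, I would pass to cyclically reduced conjugates of $\alpha$ inside $N*H_{2i}$ and of $\beta$ inside $N*H_{2(i+1)}$ and combine the conjugacy theorem for free products with the syllable computation above to see that such a cyclically reduced conjugate of $\alpha$ is either trivial or a single syllable lying in $N$; either way $g^{G}\in N^{c}$. If $\alpha$ and $\beta$ are not conjugate in $P$, I would invoke Lemma~3.1 of \cite{hom-ould} --- as in the proof of Lemma~\ref{lemma-acl} and of the preceding lemma, using that $G$ is the multiple HNN extension of $P$ with stable letters $t_0,\dots,t_{2i+1}$ and associated subgroups $\langle d_j\rangle\mapsto\langle c_{j+1}\rangle$ --- to obtain $0\leq j\leq 2i+1$ and $r\in\mathbb Z$ such that $(\alpha,\beta)$ is conjugate in $P$ to $(d_j^{\,r},c_{j+1}^{\,r})$ or to $(c_{j+1}^{\,r},d_j^{\,r})$. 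A parity count then finishes the argument: if $r\neq 0$, being conjugate in $P$ to $\alpha\in N*H_{2i}$ forces one of $H_j,H_{j+1}$ to occur among $H_0,H_2,\dots,H_{2i}$, while being conjugate in $P$ to $\beta\in N*H_{2(i+1)}$ forces the other to occur among $H_0,\dots,H_{2(i-1)},H_{2(i+1)}$, which is impossible because $j$ and $j+1$ have opposite parity; hence $r=0$, $\alpha=1$, and $g^{G}=1^{G}\in N^{c}$.

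The normal-form and conjugacy-theorem bookkeeping in free products, and the verification that the even-indexed $H_j$ assemble inside $G_{2(i+1)}$ into the free products predicted by Lemma~\ref{l-aclH}, are routine and I would only sketch them. The one step I expect to require care is the case of the conjugacy identity where $\alpha$ and $\beta$ become conjugate only in $G$: there one must be sure that Lemma~3.1 of \cite{hom-ould} applies verbatim to the multiple HNN presentation of $G_{2(i+1)}$ over $P_{2(i+1)}$ and that its output is controlled by the boundary-pairing data $(d_j,c_{j+1})$, exactly as in the two preceding lemmas. Granting that, the parity argument closes the case and the proof is complete.
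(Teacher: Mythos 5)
Your proposal is correct and follows essentially the same strategy as the paper: reduce via Lemma~\ref{l-aclH} to the identities $(N*H_{2i})\cap(N*H_{2(i+1)})=N$ and $(N*H_{2i})^c\cap(N*H_{2(i+1)})^c=N^c$, handle the first by normal forms, and for the second split into the case where $\alpha$ and $\beta$ are conjugate inside the vertex group of an HNN splitting (free-product conjugacy theorem) and the case where conjugation must cross a stable letter (controlled by \cite[Lemma 3.1]{hom-ould} via the edge data $(d_j,c_{j+1})$). The only substantive difference is the choice of splitting: you take $G_{2(i+1)}$ as the multiple HNN extension over $P_{2(i+1)}=H_0*\cdots*H_{2(i+1)}$ with all stable letters $t_0,\dots,t_{2i+1}$ and close the second case with a uniform parity count on $j$ and $j+1$, whereas the paper uses the coarser vertex group $G_{2(i-1)}*H_{2i-1}*H_{2i}*H_{2i+1}*H_{2(i+1)}$ with only four stable letters and then eliminates the eight possible boundary elements by explicit enumeration; your parity argument is a slightly cleaner packaging of the same elimination. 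One small imprecision: in your first case the cyclically reduced conjugate of $\alpha$ need not be trivial or a single syllable --- it can have length $\geq 2$ with all syllables in the factors of $N$ --- but since the cyclic-permutation form of the conjugacy theorem forces its syllables to avoid both $H_{2i}$ and $H_{2(i+1)}$, it still lies in $N$ and your conclusion $g^G\in N^c$ stands.
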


\begin{proof} 

We may work in $F=G_{2(i+1)}$. The first result follows from Lemma~\ref{l-aclH} and normal forms: if $L=A*B*C$ then $A*B \cap A*C=A$.  
For the second part let $c^F \in \acl^c(\overline h_0, \overline h_2, \dots, \overline h_{2(i-1)}, \overline h_{2i}) \cap \acl^c(\overline h_0, \overline h_2, \dots, \overline h_{2(i-1)}, \overline h_{2(i+1)}), c\neq 1$. 

By Lemma~\ref{l-aclH} $(ii)$  there exist $\a \in \acl(\overline h_0, \overline h_2, \dots, \overline h_{2(i-1)}, \overline h_{2i})$ and $\b \in \acl(\overline h_0, \overline h_2, \dots, \overline h_{2(i-1)}, \overline h_{2(i+1)})$ such that $\a^F=\b^F=c^F$.  We have 
$$
 G_{2(i+1)}=\<G_{2(i-1)}*H_{2i-1}*H_{2i}*H_{2i+1}*H_{2(i+1)}, t_j, 2(i-1)\leq j\leq 2i+1\mid d_j^{t_j}=c_{j+1}\>. 
$$

First suppose that $\a$ and $\b$ are conjugate in $L=G_{2(i-1)}*H_{2i-1}*H_{2i}*H_{2i+1}*H_{2(i+1)}$. Since $\a \in G_{2(i-1)}*H_{2i}$ and $\b \in G_{2(i-1)}*H_{2(i+1)}$, it follows from properties of normal forms that $\a$ is conjugate to an element of $G_{2(i-1)}$. But since $\a \in \acl(\overline h_0, \overline h_2, \dots, \overline h_{2(i-1)})*H_{2i} $ it follows that $\a$ is conjugate to an element of  $\acl(\overline h_0, \overline h_2, \dots, \overline h_{2(i-1)})$; which is the required result. 

Now suppose that $\a$ and $\b$ are conjugate in $ G_{2(i+1)}$ but not in $L=G_{2(i-1)}*H_{2i-1}*H_{2i}*H_{2i+1}*H_{2(i+1)}$.  Then $\a$ (and similarly $\b$) is conjugate in $L$ to a power of one the elements $d_{2i-1}$, $c_{2i}$, $d_{2(i-1)}$, $c_{2i-1}$, $d_{2i}$, $c_{2i+1}$,  $d_{2i+1}$, $c_{2(i+1)}$. 

Since $\a \in G_{2(i-1)}*H_{2i}$, we conclude that $\a$ is conjugate to a power of $c_{2i}$ or $d_{2i}$ or $d_{2(i-1)}$. Similarly, since $\b \in  G_{2(i-1)}*H_{2(i+1)}$, we conclude that $\b$ is conjugate to a power of $c_{2(i+1)}$ or   $d_{2(i-1)}$. 

If $\a$ is conjugate to a power of $c_{2i}$ then $\b$ is conjugate (in $L$ ) to a power of $d_{2i-1}$; which is a contradiction. Similarly, if $\a$ is conjugate  to a power of $d_{2i}$ then $\b$ is conjugate (in $L$ ) to a power of $c_{2i+1}$; which is also a contradiction. Hence $\a$ is conjugate to $d_{2(i-1)}$ and thus $c^F \in \acl^c(\overline h_0, \overline h_2, \dots, \overline h_{2(i-1)})$ as required. 
\end{proof}

By Corollary~\ref{cor-acleq} we thus have proved the following:
\begin{corollary} \label{cor-prop-acl}
We have 
 \[\acleq(\overline h_0) \cap \acleq(\overline h_2)=\acleq(\emptyset)\]

and for $i\geq 1$ 
\[
\acleq(\overline h_0, \overline h_2, \dots, \overline h_{2(i-1)}, \overline h_{2i}) \cap \acleq(\overline h_0, \overline h_2, \dots, \overline h_{2(i-1)}, \overline h_{2(i+1)})
=\acleq(\overline h_0, \overline h_2, \dots, \overline h_{2(i-1)}). \qed
\]
\end{corollary}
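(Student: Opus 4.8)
The plan is to obtain both identities as immediate instances of Corollary~\ref{cor-acleq}, once the corresponding statements about $\acl$ and $\acl^c$ are in hand --- and these have just been established in the two preceding lemmas. Recall that Corollary~\ref{cor-acleq}, applied with trivial $\G$ to a nonabelian free group $F$, says that for finite tuples $\bar a,\bar b,\bar c$ from $F$ one has $\acleq_F(\bar a)\cap\acleq_F(\bar b)=\acleq_F(\bar c)$ if and only if \emph{both} $\acl_F(\bar a)\cap\acl_F(\bar b)=\acl_F(\bar c)$ \emph{and} $\acl^c_F(\bar a)\cap\acl^c_F(\bar b)=\acl^c_F(\bar c)$. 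So all that is needed is to plug in the right tuples and quote the real computations already done. Since $G_i$ is a free factor of $G_{2n}$ (Lemma~\ref{l-freefactor}), hence an elementary substructure, and since algebraic closure does not depend on the model, every algebraic closure below may be computed inside whichever $G_j$ contains the relevant tuples.

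For the first identity I would work in the nonabelian free group $F=G_2$ and apply Corollary~\ref{cor-acleq} with $\bar a=\overline h_0$, $\bar b=\overline h_2$ and $\bar c$ the empty tuple (equivalently $\bar c=1$): the two real hypotheses $\acl(\overline h_0)\cap\acl(\overline h_2)=\acl(\emptyset)$ and $\acl^c(\overline h_0)\cap\acl^c(\overline h_2)=\acl^c(\emptyset)$ are precisely the conclusions of the lemma stating $\acl(\overline h_0)\cap\acl(\overline h_2)=\acl(\emptyset)$ together with its conjugacy-class version, so Corollary~\ref{cor-acleq} yields $\acleq(\overline h_0)\cap\acleq(\overline h_2)=\acleq(\emptyset)$. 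For the second identity, fix $i\geq 1$, work in $F=G_{2(i+1)}$, and apply Corollary~\ref{cor-acleq} with $\bar a=(\overline h_0,\dots,\overline h_{2(i-1)},\overline h_{2i})$, $\bar b=(\overline h_0,\dots,\overline h_{2(i-1)},\overline h_{2(i+1)})$ and $\bar c=(\overline h_0,\dots,\overline h_{2(i-1)})$; here the two real hypotheses are exactly the two displayed equalities in the last lemma of the section.

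The verification that the hypotheses of Corollary~\ref{cor-acleq} hold is routine: the tuples $\overline h_{2j}$ are finite and lie in the nonabelian free group $G_2$ (resp.\ $G_{2(i+1)}$), which is free of rank $\geq 3$; the subgroups they generate (free products of the $H_{2j}$) are in fact nonabelian free factors; and with $\G$ trivial the auxiliary conditions of the form $\acl_H=\acl_F$ appearing behind Corollary~\ref{cor-acleq} are vacuous. There is really no remaining obstacle: the substantive work lies upstream --- in the two preceding lemmas, whose conjugacy-class parts rely on \cite[Lemma 3.1]{hom-ould} about conjugacy in amalgams and HNN-extensions, and in Corollary~\ref{cor-acleq} itself, which rests on Sela's strong elimination of imaginaries (Theorem~\ref{sela-ima}) and on Proposition~\ref{prop-imag-acl}. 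The only thing to watch is the bookkeeping: choosing the ambient free factor correctly and matching $\bar a,\bar b,\bar c$ to the previously proved identities.
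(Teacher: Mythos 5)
Your proposal is correct and is exactly the paper's argument: the corollary is deduced by applying Corollary~\ref{cor-acleq} (with trivial $\G$) to the real and conjugacy-class identities established in the two preceding lemmas, with the same choice of tuples $\bar a,\bar b,\bar c$ and the same reduction to a suitable free factor $G_2$ resp.\ $G_{2(i+1)}$.
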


\bigskip

To finish the proof of the fact that our sequence is a witness for the $n$-ample property  we prove  the two next lemmas which yield the required properties of independence.

\begin{lemma} For $i=1,\ldots, n-1$, there exists a free decomposition $G_n=K*H_{2i}*L$ such that $\overline h_0, \dots, \overline h_{2(i-1)} \in K*H_{2i}$ and $\overline h_{2(i+1)}\in H_{2i}*L$. 

\end{lemma}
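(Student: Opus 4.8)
The plan is to peel $G_n$ apart around $H_{2i}$ using only the two HNN-pieces that attach to it. Starting from the identity $G_n=\langle G_{2i-1}*H_{2i}*H_{2i+1}*\cdots*H_n,\ t_j,\ 2i-1\le j\le n-1\mid d_j^{t_j}=c_{j+1}\rangle$ recorded in the Remark above, I would group the pieces of index $\geq 2i+1$ into a single factor $G'=\langle H_{2i+1}*\cdots*H_n,\ t_j,\ 2i+1\le j\le n-1\mid d_j^{t_j}=c_{j+1}\rangle$ (legitimate, since the relations $d_j^{t_j}=c_{j+1}$ with $j\ge 2i+1$ involve only $H_{2i+1},\dots,H_n$), obtaining
$$
G_n=\langle G_{2i-1}*H_{2i}*G',\ t_{2i-1},\ t_{2i}\mid d_{2i-1}^{t_{2i-1}}=c_{2i},\ d_{2i}^{t_{2i}}=c_{2i+1}\rangle.
$$
Here $c_{2i+1}$ is a basis element of $H_{2i+1}$, which is a free factor of $G'$ by Lemma~\ref{l-freefactor}(1), and $\overline h_{2(i+1)}\subseteq G'$ since $a_{2i+2},b_{2i+2}\in H_{2i+2}\subseteq G'$ and $c_{2i+2}=d_{2i+1}^{t_{2i+1}}\in G'$.

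Next I would absorb the two stable letters one at a time, exactly as in the proof of Lemma~\ref{l-freefactor}. Since $H_{2i-1}$ is a free factor of $G_{2i-1}$ (Lemma~\ref{l-freefactor}(2)), $d_{2i-1}$ is primitive in $G_{2i-1}$, hence $d_{2i-1}^{t_{2i-1}}$ is primitive in $G_{2i-1}*\langle t_{2i-1}\rangle$; applying the automorphism of $G_{2i-1}*\langle t_{2i-1}\rangle$ that conjugates $d_{2i-1}$ by $t_{2i-1}$ and fixes the other basis elements, we may write $G_{2i-1}*\langle t_{2i-1}\rangle=\langle d_{2i-1}^{t_{2i-1}}\rangle*W$ with $t_{2i-1}\in W$ and with $W$ containing a chosen complement $G_{2i-1}^{0}$ of $\langle d_{2i-1}\rangle$ in $G_{2i-1}$. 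Using the basis $\{c_{2i},a_{2i},b_{2i}\}$ of $H_{2i}$, the relation $d_{2i-1}^{t_{2i-1}}=c_{2i}$ amalgamates the two copies of $\langle c_{2i}\rangle$ and collapses the presentation to $G_n=\langle W*H_{2i}*G',\ t_{2i}\mid d_{2i}^{t_{2i}}=c_{2i+1}\rangle$. For the second stable letter I switch to the basis $\{a_{2i},b_{2i},d_{2i}\}$ of $H_{2i}$: now $d_{2i}$ is primitive in $W*H_{2i}*G'$ and $c_{2i+1}$ is primitive in $G'=\langle c_{2i+1}\rangle*V$, and eliminating the redundant generator $c_{2i+1}=t_{2i}^{-1}d_{2i}t_{2i}$ produces the free product
$$
G_n=W*H_{2i}*V*\langle t_{2i}\rangle=:K*H_{2i}*L,\qquad K:=W,\quad L:=V*\langle t_{2i}\rangle.
$$

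It then remains to verify the two membership conditions. From $t_{2i-1}\in W$, $d_{2i-1}=t_{2i-1}c_{2i}t_{2i-1}^{-1}$ and $G_{2i-1}^{0}\le W$, where $G_{2i-1}=\langle d_{2i-1}\rangle*G_{2i-1}^{0}$, we get $G_{2i-1}\le W*H_{2i}=K*H_{2i}$; since $G_{2j}$ is a free factor of $G_{2i-1}$ for $0\le j\le i-1$ (Lemma~\ref{l-freefactor}(1)), this gives $\overline h_0,\dots,\overline h_{2(i-1)}\in K*H_{2i}$. Conversely $c_{2i+1}=t_{2i}^{-1}d_{2i}t_{2i}\in H_{2i}*\langle t_{2i}\rangle\le H_{2i}*L$ and $V\le L$, so $G'=\langle c_{2i+1}\rangle*V\le H_{2i}*L$, whence $\overline h_{2(i+1)}\subseteq G'\le H_{2i}*L$.

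The one genuinely delicate point — the step I would be most careful about — is that the two HNN-relations involve $c_{2i}$ and $d_{2i}$, which are two \emph{different} basis elements of the rank-$3$ group $H_{2i}$, tied together by $c_{2i}d_{2i}[a_{2i},b_{2i}]=1$. One must treat the relation containing $c_{2i}$ first, in the $c_{2i}$-basis, and only afterwards pass to the $d_{2i}$-basis for the second relation; this is what keeps $H_{2i}$ visibly a free factor at every stage and keeps the old subgroups $G_{2i-1}$ and $G'$ trapped respectively inside $K*H_{2i}$ and $H_{2i}*L$. Everything else is routine Nielsen/Tietze bookkeeping with primitive elements of free groups.
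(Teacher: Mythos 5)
Your proof is correct and follows essentially the same route as the paper's: both pass to the presentation $G_n=\langle G_{2i-1}*H_{2i}*L_i,\ t_{2i-1},t_{2i}\mid d_{2i-1}^{t_{2i-1}}=c_{2i},\ d_{2i}^{t_{2i}}=c_{2i+1}\rangle$, use the primitivity of $d_{2i-1}$ in $G_{2i-1}$ and of $c_{2i+1}$ in $L_i$ to absorb the two stable letters, and land on $G_n=K_0*\langle t_{2i-1}\rangle*H_{2i}*\langle t_{2i}\rangle*L_0$ with the same choice of $K$ and $L$. The only cosmetic difference is that the ``delicate point'' you flag about alternating between the $c_{2i}$- and $d_{2i}$-bases of $H_{2i}$ is not actually needed: the Tietze eliminations only remove $d_{2i-1}$ from $G_{2i-1}$ and $c_{2i+1}$ from $L_i$, so $H_{2i}$ is never touched and stays a visible free factor throughout.
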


\begin{proof} Let $1\leq i\leq n-1$ and put
$$
L_i=\<H_{2i+1}*\cdots *H_n,  t_j, 2i+1 \leq j \leq n-1\mid d_j^{t_j}=c_{j+1}\>.
$$
Then 
$$
G_n=\<G_{2i-1}*H_{2i}*L_i, t_{2i-1}, t_{2i} \mid d_{2i-1}^{t_{2i-1}}=c_{2i}, d_{2i}^{t_{2i}}=c_{2i+1}\>
$$

Since $H_{2i-1}$ is a free factor of $G_{2i-1}$, $d_{i-1}$ is primitive in $G_{2i-1}$ and thus we can write $G_{2i-1}=K_0*\<d_{2i-1}\>$ for some free group $K_0$. Similarly, $c_{2i+1}$ is primitive in $L_i$ and thus we can write $L_i=\<c_{2i+1}\>*L_0$ for some free group $L_0$. 

Therefore 
$$
G_n=K_0*\<t_{2i-1}\mid \>*H_{2i}*\<t_{2i}\mid \>*L_0,
$$
and by setting $K=K_0*\<t_{2i-1}\mid \>$ and $L=\<t_{2i}\mid \>*L_0$ we get $G_n=K*H_{2i}*L$ with  $\overline h_0, \dots, \overline h_{2(i-1)} \in K*H_{2i}$ and $\overline h_{2(i+1)}\in H_{2i}*L$ as required.
\end{proof}

\begin{lemma} There is no free decomposition $G_n=K*L$ such that $\overline h_0 \in K$ and $\overline h_n \in L$. 
\end{lemma}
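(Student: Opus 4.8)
The plan is a proof by contradiction, combining a retraction onto $K$ with the surface-type identity for $c_0$ established during the proof of Lemma~\ref{lemma-surface}, followed by a computation in homology.

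The case $n=0$ is immediate, since then $\langle \overline h_0\rangle=H_0=G_0$, so $\overline h_0\in K$ and $\overline h_0\in L$ would force $H_0\le K\cap L=1$, which is absurd. So assume $n\ge 1$ and suppose towards a contradiction that $G_n=K*L$ with $\overline h_0\in K$ and $\overline h_n\in L$. Then $a_0,b_0,c_0\in K$, and since $\{a_n,b_n,d_n\}$ is also a basis of $H_n$ we get $d_n\in H_n\le L$. Let $r\colon G_n\to K$ be the retraction which is the identity on $K$ and trivial on $L$; thus $r(d_n)=1$ and $r(c_0)=c_0$. Now I would invoke the substitution carried out in the proof of Lemma~\ref{lemma-surface}: replacing successively $c_i$ by $[b_i,a_i]d_i^{-1}$ and $d_i$ by $t_ic_{i+1}t_i^{-1}$, starting from $c_0$, one arrives at an identity in $G_n$ of the form $c_0=w_1\,(d_n^{\varepsilon})^{s_{n-1}}\,w_2$ with $\varepsilon=\pm 1$, $s_{n-1}=(t_0\cdots t_{n-1})^{-1}$, and $w_1,w_2$ products of conjugates of commutators $[a_i,b_i]^{\pm 1}$; in particular $w_1,w_2\in[G_n,G_n]$. (For even $n$ this is precisely the displayed formula in that proof, with $w_1=[b_0,a_0][b_2,a_2]^{s_1}\cdots[b_n,a_n]^{s_{n-1}}$ and $w_2=[a_{n-1},b_{n-1}]^{s_{n-2}}\cdots[a_1,b_1]^{s_0}$; for odd $n$ the same iteration gives the analogous identity.) Applying $r$ and using $r(d_n)=1$ yields $r(c_0)=r(w_1)r(w_2)\in[K,K]$, since $r$ carries $[G_n,G_n]$ into $[K,K]$; as $r(c_0)=c_0$ this shows $c_0\in[K,K]$.

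To finish, note that $K$ is a free factor of $G_n=K*L$, so $K^{\mathrm{ab}}$ is a direct summand of $G_n^{\mathrm{ab}}$, and hence $c_0\in[K,K]$ forces the class of $c_0$ to vanish in $G_n^{\mathrm{ab}}$. But abelianizing the presentation of $G_n$ the relations $c_id_i[a_i,b_i]=1$ and $d_i^{t_i}=c_{i+1}$ become $d_i=-c_i$ and $c_{i+1}=-c_i$, so $G_n^{\mathrm{ab}}$ is free abelian with basis $\{[a_i],[b_i]\colon 0\le i\le n\}\cup\{[c_0]\}\cup\{[t_j]\colon 0\le j\le n-1\}$; in particular the class of $c_0$ is nonzero. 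This contradiction proves the lemma.

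The one step that requires care is the generalised identity $c_0=w_1(d_n^{\varepsilon})^{s_{n-1}}w_2$ for arbitrary $n$, but this is a purely formal iteration of the two families of defining relations and so is not a genuine obstacle; everything else is routine. Alternatively one can phrase the same argument geometrically: by Lemma~\ref{lemma-surface}, $G_n=S*\langle t_0\rangle*\cdots*\langle t_{n-1}\rangle$ with $S$ the fundamental group of an orientable surface with exactly two boundary subgroups, namely conjugates of $\langle c_0\rangle$ and $\langle d_n\rangle$; a free splitting of $G_n$ separating $\overline h_0$ from $\overline h_n$ would, via the Kurosh subgroup theorem, induce a free splitting of $S$ placing these two boundary subgroups in distinct free factors, which is impossible for a surface with exactly two boundary components and negative Euler characteristic — and this impossibility is exactly the homology computation above, carried out inside $S$.
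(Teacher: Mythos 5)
Your proof is correct. It rests on the same ultimate contradiction as the paper's: in $G_n^{\mathrm{ab}}$ the defining relations force $[c_0]=\pm[d_n]$, while $[c_0]\neq 0$. The difference lies in how the hypothesis $G_n=K*L$ is exploited. The paper argues that $H_0$ (resp.\ $H_n$), being a free factor of $G_n$ contained in $K$ (resp.\ $L$), is a free factor of $K$ (resp.\ $L$) --- this uses Lemma~\ref{l-freefactor}~(2) together with a Kurosh-type argument --- so that $c_0$ and $d_n$, being primitive in $H_0$ and $H_n$, form part of a basis of $G_n$; their images in $G_n^{\mathrm{ab}}$ are then independent, contradicting $[c_0]=\pm[d_n]$. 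You instead use only the retraction $r\colon G_n\to K$ killing $L$, which needs nothing beyond $c_0\in K$ and $d_n\in L$; this bypasses the Kurosh step and the primitivity of $c_0$ and $d_n$ entirely, and is in that sense more elementary and robust. Note also that your explicit identity $c_0=w_1(d_n^{\varepsilon})^{s_{n-1}}w_2$ (and hence any extension of Lemma~\ref{lemma-surface} to odd $n$) is not really needed: the congruence $c_0\equiv d_n^{\pm1}\bmod [G_n,G_n]$ is immediate from the abelianized relations, and it already gives $[c_0]_K=r^{\mathrm{ab}}([c_0])=\pm r^{\mathrm{ab}}([d_n])=0$, whence $[c_0]=0$ in $G_n^{\mathrm{ab}}=K^{\mathrm{ab}}\oplus L^{\mathrm{ab}}$, the desired contradiction.
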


\begin{proof} Suppose towards a contradiction that such a free decomposition exists. Since $H_i$ is a free factor, it follows that $H_0$ is a free factor of $K$ and $H_n$ is a free factor of $L$. Since $c_0$ is primitive in $H_0$ it is primitive in $K$ and since $d_n$ is primitive in $H_n$ it is primitive in $K$. We conclude that $\{c_0, d_n\}$ is part of a basis of $G_n$. 
Therefore in the abelianisation $G_n^{ab}=G_n/[G_n, G_n]$, we get that $\{c_0, d_n\}$ is part of a basis. 

However in $G_n^{ab}$ we have $c_id_i=1$ and $d_i=c_{i+1}$ and thus $c_0=d_n^{\pm 1}$ depending on whether $n$ is odd or even, a contradiction. \end{proof}

By Proposition \ref{p-forkingindep} we have thus proved the following:
\begin{corollary} \label{cor-propo-inde}
We have 
\[{\overline h_{2n}}  \NotInd{}{} {\overline h_0}\]
and for $i\geq 1$ 
\[\overline h_0\ldots \overline h_{2(i-1)} \Ind_{\overline h_{2i}}\overline h_{2(i+1)}. \qed\]
\end{corollary}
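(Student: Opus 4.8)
The plan is to read off both assertions directly from Proposition~\ref{p-forkingindep} (the Perin--Sklinos characterization of forking independence over free factors in free groups), fed with the two free-decomposition lemmas just proved. Note first that by Lemma~\ref{l-freefactor} the ambient group $G_{2n}$ is free of finite rank $3(2n+1)$, and each $H_{2i}$ is a free factor of $G_{2n}$ with basis $\overline h_{2i}=(a_{2i},b_{2i},c_{2i})$; so the hypotheses of Proposition~\ref{p-forkingindep} are in force with $F=G_{2n}$ and with $C$ taken to be either trivial or $H_{2i}$.

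For the independences $\overline h_0\ldots\overline h_{2(i-1)}\Ind_{\overline h_{2i}}\overline h_{2(i+1)}$, $1\le i\le n-1$, I would apply Proposition~\ref{p-forkingindep} with $C=H_{2i}$. The first of the two preceding lemmas (applied with $n$ replaced by $2n$) furnishes a free decomposition $G_{2n}=K*H_{2i}*L$ with $\overline h_0,\dots,\overline h_{2(i-1)}\in K*H_{2i}$ and $\overline h_{2(i+1)}\in H_{2i}*L$. This is precisely a decomposition $F=A*C*B$ with $\bar a\in A*C$ and $\bar b\in C*B$ as required, so the independence follows. For the non-independence ${\overline h_{2n}}\NotInd{}{}{\overline h_0}$ I would argue by contradiction: by Proposition~\ref{p-forkingindep} with $C$ trivial, ${\overline h_{2n}}\Ind_{\emptyset}{\overline h_0}$ would entail a free decomposition $G_{2n}=A*B$ with $\overline h_0\in A$ and $\overline h_{2n}\in B$, which is exactly what the second preceding lemma (again with $n$ replaced by $2n$) forbids.

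I do not anticipate any serious obstacle here: the real content is already packaged into the two lemmas on free splittings of $G_{2n}$ and, for the second one, into the abelianization computation forcing $c_0$ to be conjugate to $d_{2n}^{\pm1}$ modulo $[G_{2n},G_{2n}]$. The only points demanding a little care are bookkeeping ones --- re-indexing the generic lemmas from $G_n$ to the ambient group $G_{2n}$ of the $n$-ample configuration, and checking that $\overline h_{2i}$ is literally a basis of the free factor $H_{2i}$ so that the clause \emph{``$C$ a free factor with finite basis''} of Proposition~\ref{p-forkingindep} is legitimately met. The $\acl^{eq}$-intersection clauses of Definition~\ref{def-ample} are untouched here, having been dealt with in Corollary~\ref{cor-prop-acl}.
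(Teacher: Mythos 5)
Your proposal is correct and is exactly the paper's argument: the authors deduce the corollary immediately from Proposition~\ref{p-forkingindep} applied to the two free-decomposition lemmas that precede it (the decomposition $G_{2n}=K*H_{2i}*L$ for the independences, and the abelianization obstruction for ${\overline h_{2n}}\NotInd{}{}{\overline h_0}$). Your remarks on re-indexing from $G_n$ to $G_{2n}$ and on $H_{2i}$ being a free factor with basis $\overline h_{2i}$ are exactly the right sanity checks.
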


\bigskip
Putting Corollary \ref{cor-prop-acl} and Corollary \ref{cor-propo-inde} together  we therefore have  proved the following:

\begin{theorem}\label{thm-free}  The $u_i=\overline h_{2i}\in G_{2n},i=0,\ldots n$  witness the fact that $T_{fg}$ is $n$-ample,
i.e. we have the following:
\end{theorem}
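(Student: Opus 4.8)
The statement is essentially a bookkeeping exercise that collects the preceding lemmas into the four conditions of Definition~\ref{def-ample}. Recall that $T_{fg}$ is stable, so all four clauses make sense, and recall also that each $G_i$ is a free factor of $G_{2n}$ (Lemma~\ref{l-freefactor}), hence an elementary subgroup, so that algebraic closures and imaginary algebraic closures of tuples contained in $G_i$ may be computed inside $G_i$ itself. The plan is to verify conditions $(i)$--$(iv)$ of Definition~\ref{def-ample} for the tuples $u_0=\overline h_0, u_1=\overline h_2,\ldots,u_n=\overline h_{2n}$ in $G_{2n}$, simply citing the corollaries already established.

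First, conditions $(iii)$ and $(iv)$ — the statements about imaginary algebraic closure — are exactly Corollary~\ref{cor-prop-acl}: that corollary gives $\acleq(\overline h_0)\cap\acleq(\overline h_2)=\acleq(\emptyset)$, which is $(iii)$, and
\[
\acleq(\overline h_0,\ldots,\overline h_{2(i-1)},\overline h_{2i})\cap\acleq(\overline h_0,\ldots,\overline h_{2(i-1)},\overline h_{2(i+1)})=\acleq(\overline h_0,\ldots,\overline h_{2(i-1)})
\]
for $i\geq 1$, which upon re-indexing $a_j=\overline h_{2j}$ is precisely $(iv)$ for $1\leq i<n$. Here one only needs to note that the relevant tuples in Corollary~\ref{cor-prop-acl} are indexed up to $\overline h_{2(i+1)}$, so the instances with $1\le i\le n-1$ are exactly those quantified in Definition~\ref{def-ample}$(iv)$.

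Second, conditions $(i)$ and $(ii)$ — the (in)dependence statements — are Corollary~\ref{cor-propo-inde}: that corollary gives ${\overline h_{2n}}\NotInd{}{}{\overline h_0}$, which is $(i)$, and $\overline h_0\ldots\overline h_{2(i-1)}\Ind_{\overline h_{2i}}\overline h_{2(i+1)}$ for $i\geq 1$, which is $(ii)$ once we observe that the relevant range is $1\le i<n$ (the case $i=n$ does not arise because $\overline h_{2(n+1)}$ is not among our tuples), and that, since each $\overline h_{2j}$ generates a free factor of $G_{2n}$, independence over $\overline h_{2i}$ in the sense of Proposition~\ref{p-forkingindep} agrees with forking independence. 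Assembling these four verifications completes the proof; there is no genuine obstacle remaining at this point, as all the mathematical content has been discharged in the lemmas of this section. I would simply write:

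\begin{proof}
Set $a_i=u_i=\overline h_{2i}$ for $0\le i\le n$. Since each $\overline h_{2i}$ generates a free factor of $G_{2n}$ (Lemma~\ref{l-freefactor}) and $G_{2n}$ is a free group, Proposition~\ref{p-forkingindep} applies. By Corollary~\ref{cor-propo-inde} we have ${a_n}\NotInd{}{}{a_0}$, giving Definition~\ref{def-ample}$(i)$, and $a_0\ldots a_{i-1}\Ind_{a_i}a_{i+1}\ldots a_n$ for $1\le i<n$ (the statement of Corollary~\ref{cor-propo-inde} with $\overline h_{2(i+1)}$ on the right is the instance $a_{i+1}=\overline h_{2(i+1)}$, and monotonicity of forking yields the full tuple $a_{i+1}\ldots a_n$), giving $(ii)$. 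Finally, Corollary~\ref{cor-prop-acl} gives $\acleq(a_0)\cap\acleq(a_1)=\acleq(\emptyset)$, which is $(iii)$, and
\[
\acleq(a_0\ldots a_{i-1}a_i)\cap\acleq(a_0\ldots a_{i-1}a_{i+1})=\acleq(a_0\ldots a_{i-1})
\]
for $1\le i<n$, which is $(iv)$. Hence $a_0,\ldots,a_n$ witness that $T_{fg}$ is $n$-ample.
\end{proof}
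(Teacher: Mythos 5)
Your proof is correct and is essentially the paper's own: the theorem appears there with a \qed attached precisely because it is nothing more than the conjunction of Corollary~\ref{cor-prop-acl} and Corollary~\ref{cor-propo-inde}, which is exactly how you assemble it. One small caveat: your parenthetical appeal to ``monotonicity of forking'' to upgrade $a_0\ldots a_{i-1} \Ind_{a_i} a_{i+1}$ to $a_0\ldots a_{i-1} \Ind_{a_i} a_{i+1}\ldots a_n$ runs in the wrong direction (monotonicity lets you shrink the right-hand side, not enlarge it); if you want clause $(ii)$ of Definition~\ref{def-ample} verbatim with the full tail, the correct route is to observe that in the free decomposition $G_{2n}=K*H_{2i}*L$ of the lemma preceding Corollary~\ref{cor-propo-inde} all of $\overline h_{2(i+1)},\ldots,\overline h_{2n}$ already lie in $H_{2i}*L$, so Proposition~\ref{p-forkingindep} yields the independence of the whole tail at once.
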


\smallskip
 $(i)$  ${u_n}  \NotInd{}{} {u_0}$;

 $(ii)$    $u_0\ldots u_{i-1} \Ind_{u_i}u_{i+1}$ for $1 \leq i <n$;

$(iii)$ $\acleq(u_0) \cap \acleq(u_1)=\acleq(\emptyset)$. 

$(iv)$ $\acleq(u_0, u_1, \dots, u_{i-1}, u_{i}) \cap \acleq(u_0, u_1, \dots, u_{i-1}, u_{i+1})=\acleq(u_0, u_1, \dots, u_{i-1})$. \qed

\begin{remark}In fact, since
\[
G_{n+2}=\<G_n*H_{n+1}*H_{n+2}, t_j,j=n, n+1\mid d_j^{t_j}=c_{j+1}\>
\]
any free group $F$ of infinite rank
contains a sequence $(u_n\colon n<\omega)$ of tuples  such that

\smallskip
 $(i)$  ${u_i}  \NotInd{}{} {u_j}$  for $i\neq j$;

 $(ii)$  $\indep{u_0\ldots u_{i-1}}{u_i}{u_{i+1}}$

$(iii)$ $\acleq(u_0) \cap \acleq(u_1)=\acleq(\emptyset)$. 

$(iv)$ $\acleq(u_0, u_1, \dots, u_{i-1}, u_{i}) \cap \acleq(u_0, u_1, \dots, u_{i-1}, u_{i+1})=\acleq(u_0, u_1, \dots, u_{i-1})$. 

\smallskip
In particular,  $F_{\omega}$  contains  an explicit  sequence $(u_n\colon n<\omega)$ such that for every $n$ the finite sequence $u_0, u_1, \cdots, u_n$ is a witness of  the $n$-ampleness. 

\end{remark}

\section{Proof of the main theorem}

We now move back to working in a \tfh group. Let $H=\G*G_{2n}$ where
$\G$ is \tfh and $G_{2n}$ is as before.
In order to finish the proof of the main theorem we just need the
following observation:

\begin{lemma}\label{lemma-aclFH}With $\overline h_0,\overline h_2,\ldots , \overline h_{2n}$ defined as in Section 4, we have for $0\leq i\leq k$
\[\acl_{G_{2n}}(\overline h_0,\overline h_2,\ldots,\overline h_{2i},\overline h_{2k})=
\acl_H(\overline h_0,\overline h_2,\ldots,\overline h_{2i},\overline h_{2k}).\]
\end{lemma}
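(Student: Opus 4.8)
The plan is to show that the subgroup $\acl_{G_{2n}}(\overline h_0,\ldots,\overline h_{2i},\overline h_{2k})$ is already algebraically closed in the larger group $H=\G*G_{2n}$, which by general properties of algebraic closure ($\acl$ is monotone and $\acl(\acl(X))=\acl(X)$) immediately gives the reverse inclusion $\acl_H\subseteq\acl_{G_{2n}}$; the inclusion $\acl_{G_{2n}}\subseteq\acl_H$ is automatic since $G_{2n}\preceq H$ is not needed --- rather we use that adding parameters only enlarges $\acl$, together with $\acl_{G_{2n}}(X)\subseteq\acl_H(X)$ which holds because $G_{2n}$ is a free factor, hence an elementary substructure, of $H$ (here one uses Theorem~\ref{thm-equiv-infiniterank} to see $H$ is elementarily equivalent to, and contains, $G_{2n}$ elementarily). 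So the real content is the reverse inclusion.

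By Lemma~\ref{l-aclH}$(i)$ we know explicitly that $\acl_{G_{2n}}(\overline h_0,\ldots,\overline h_{2i},\overline h_{2k})=H_0*H_2*\cdots*H_{2i}*H_{2k}=:A$. The key structural observation is that $A$ is a vertex group of a malnormal cyclic splitting of $H$: indeed, rewriting
\[H=\G*G_{2n}=\langle\, \G*(H_0*H_2*\cdots*H_{2i}*H_{2k})*K,\ t_j,\ 0\le j\le 2n-1\mid d_j^{t_j}=c_{j+1}\,\rangle,\]
where $K$ is the free product of the remaining $H_\ell$'s, one sees that $A$ together with $\G$ may be grouped into a vertex group, or --- better for our purposes --- one first notes $A$ is a free factor of the vertex group $\G*A*K$, hence $A$ itself is a vertex group after refining the free product $\G*A*K = \G*(A*K)$ appropriately. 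The cleanest route: $H$ splits as a graph of groups with underlying structure that of $G_{2n}$'s splitting but with $\G$ adjoined as a free factor at the $H_0$-vertex (or as a separate vertex joined by a trivial edge); in this splitting $A=H_0*H_2*\cdots*H_{2i}*H_{2k}$ sits inside a single vertex group, and since it is a free factor of that vertex group, a further free refinement makes $A$ itself a malnormal cyclic-splitting vertex group of $H$. Then Proposition~\ref{prop-spli-acl} (applicable since $H$, being a torsion-free hyperbolic group free product with a free group, is a torsion-free CSA-group) gives $\acl_H(A)=A$, and since $A=\acl_{G_{2n}}(\overline h_0,\ldots,\overline h_{2k})$ and $\acl_H(\overline h_0,\ldots,\overline h_{2k})\subseteq\acl_H(A)=A$, we are done.

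The main obstacle I expect is the bookkeeping to exhibit $A$ as a genuine vertex group of a \emph{malnormal} cyclic splitting of $H$ rather than merely as a subgroup of one: one must check that the relevant edge groups (the cyclic subgroups generated by the $c_j,d_j$ at the boundary between $A\cup K$-part and the $\langle t_j\rangle$-part) remain malnormal in the adjacent vertex groups after the refinement that splits off $A$ as a free factor, and that adjoining $\G$ does not interfere --- this is where one invokes that $\G$ is attached along a trivial edge (free product) so malnormality is preserved, and that primitivity of the relevant $c_j,d_j$ in the surrounding free groups (established in Lemmas~\ref{l-freefactor} and \ref{lemma-surface}) lets us peel $A$ off cleanly, exactly as in the proof of Lemma~\ref{l-aclH}$(i)$ but now inside $H$. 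Once the splitting is set up correctly, the appeal to Proposition~\ref{prop-spli-acl} is immediate.
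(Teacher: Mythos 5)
Your proof follows essentially the same route as the paper: identify $\acl_{G_{2n}}(\overline h_0,\ldots,\overline h_{2i},\overline h_{2k})$ with $A=H_0*H_2*\cdots*H_{2i}*H_{2k}$, exhibit $A$ as a vertex group of a malnormal cyclic splitting of $H=\G*G_{2n}$ (the paper does this by extending the generalized malnormal cyclic JSJ-decomposition of $G_{2n}$ via Theorem~\ref{theorem-acl}, while you rebuild the explicit splitting of Lemma~\ref{l-aclH} inside $H$ --- a cosmetic difference), and then apply Proposition~\ref{prop-spli-acl}. One side remark is false, though harmless: a free factor of $H=\G*G_{2n}$ is in general \emph{not} an elementary substructure of $H$ when $\G$ is not elementarily equivalent to a free group (by Lemma~\ref{lem-elem} it is $EC(\G)*G_{2n}$, not $G_{2n}$, that is elementary in $H$), and Theorem~\ref{thm-equiv-infiniterank} only gives elementary equivalence of $H$ with $\G$. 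You do not actually need that claim, since the inclusion $\acl_{G_{2n}}(\bar X)=A=\langle \bar X\rangle\subseteq\acl_H(\bar X)$ is immediate once $A$ is identified as the subgroup generated by the given tuples.
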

\begin{proof}
By Lemma~\ref{lemma-acl} we have for $0\leq i\leq k$
\[\acl_{G_{2n}}(\overline h_0,\overline h_2,\overline h_{2i}\overline h_{2k})=H_0*H_2*\ldots *H_{2i}*H_{2k}.\]

By Theorem~\ref{theorem-acl} we know that $H_0*H_2*\ldots *H_{2i}*H_{2k}$ is
the vertex group containing $\{\overline h_0,\overline h_2,\ldots,\overline h_{2i},\overline h_{2k}\}$ in the generalized malnormal cyclic $JSJ$-decomposition $\Lambda$ of $G_{2n}$ relative to $\{\overline h_0,\overline h_2,\ldots ,\overline h_{2i},\overline h_{2k}\}$.
Using $\Lambda$ we obtain  a malnormal cyclic splitting of $H=\G*G_{2n}$.
By Proposition~\ref{prop-spli-acl} we now see that 
\[\acl_H(\overline h_0,\overline h_2,\ldots,\overline h_{2i},\overline h_{2k})=H_0*H_2*\ldots *H_{2i}*H_{2k}.\]
\end{proof}

Corollary~\ref{cor-reduction} combined with Lemma \ref{l-aclH} and Theorem~\ref{thm-free} now yield our main theorem:

\begin{theorem} Let $\G$ be a \tfh group and $T=Th(\G)$. Consider
the model $H=EC(\G)*G_{2n}$ of $T$. Then $u_i=\overline h_{2i}\in G_{2n},i=0,\ldots n$  witness the fact that $T$ is $n$-ample,
i.e. we have the following:
\end{theorem}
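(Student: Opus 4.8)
The plan is to deduce this final theorem directly from the work already done, treating it as essentially a corollary of Corollary~\ref{cor-reduction} applied to the explicit free-group construction of Section~4. First I would recall that by Theorem~\ref{thm-free} the tuples $u_i=\overline h_{2i}$, $i=0,\ldots,n$, witness that $T_{fg}$ is $n$-ample inside $G_{2n}$; by Lemma~\ref{l-freefactor} each $H_{2i}$ is a free factor of $G_{2n}$, so each $u_i$ generates a nonabelian free factor of $G_{2n}$, which is one of the hypotheses of Corollary~\ref{cor-reduction}.

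Next I would verify the two hypotheses of Corollary~\ref{cor-reduction} concerning algebraic closure. The equality $\acl_{G_{2n}}(\overline h_0,\ldots,\overline h_{2i},\overline h_{2k})=\acl_H(\overline h_0,\ldots,\overline h_{2i},\overline h_{2k})$ for $0\leq i\leq k$ is exactly Lemma~\ref{lemma-aclFH} (which in turn rests on Lemma~\ref{lemma-acl}, Theorem~\ref{theorem-acl} and Proposition~\ref{prop-spli-acl}). The requirement that these tuples represent conjugacy in $F=G_{2n}$, i.e. $\acl^c_{G_{2n}}(\overline h_0,\ldots,\overline h_{2i},\overline h_{2k})=\acl_{G_{2n}}(\overline h_0,\ldots,\overline h_{2i},\overline h_{2k})^c$, is precisely part $(ii)$ of Lemma~\ref{l-aclH}. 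With both hypotheses in place, Corollary~\ref{cor-reduction} immediately gives that the $u_i$ witness that $Th(\G)$ is $n$-ample, and the model in which this happens is $H=EC(\G)*G_{2n}$: indeed $EC(\G)$ is elementarily equivalent to $\G$, so $Th(H)=Th(\G)=T$.

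Then I would spell out that the four conditions $(i)$--$(iv)$ of Definition~\ref{def-ample} hold for $u_0,\ldots,u_n$ in $H$. Conditions $(i)$ and $(ii)$ about (non)independence transfer from $G_{2n}$ to $H$ by Theorem~\ref{t-indep}, using that each relevant parameter set is (generated by a basis of) a free factor $C$ of $G_{2n}$ with $EC(\G)$ not elementarily equivalent to a free group; so $\overline h_{2n}\NotInd{}{}\overline h_0$ and $\overline h_0\ldots\overline h_{2(i-1)}\Ind_{\overline h_{2i}}\overline h_{2(i+1)}$ in $H$ follow from the corresponding statements in $G_{2n}$ established in Corollary~\ref{cor-propo-inde}. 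Conditions $(iii)$ and $(iv)$ about intersections of imaginary algebraic closures transfer by Lemma~\ref{lemma-freefactor} (equivalently, already packaged inside Corollary~\ref{cor-reduction}), starting from Corollary~\ref{cor-prop-acl}.

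The only genuine point requiring care — and hence the main obstacle — is checking that the hypotheses of Corollary~\ref{cor-reduction}/Lemma~\ref{lemma-freefactor} are met for \emph{all} the relevant parameter tuples simultaneously, namely the tuples of the form $(\overline h_0,\overline h_2,\ldots,\overline h_{2i},\overline h_{2k})$ for all $0\leq i\leq k\leq n$, not merely for single $\overline h_{2i}$; here one relies on Lemma~\ref{l-aclH} which computes all these algebraic closures as explicit free products $H_0*H_2*\cdots *H_{2i}*H_{2k}$ and shows they represent conjugacy, together with Lemma~\ref{lemma-aclFH} which shows the computation is insensitive to passing from $G_{2n}$ to $H=\G*G_{2n}$. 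Once these are cited, the proof is a two-line assembly: "By Theorem~\ref{thm-free}, Lemma~\ref{l-aclH}, Lemma~\ref{lemma-aclFH} and Corollary~\ref{cor-reduction}, the tuples $u_i=\overline h_{2i}$ witness that $T$ is $n$-ample in $H=EC(\G)*G_{2n}$." Since $n$ was arbitrary, this also proves Theorem~\ref{T-main} and its corollary. \qed
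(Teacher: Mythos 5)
Your proposal is correct and follows essentially the same route as the paper, which proves this theorem in one line by combining Corollary~\ref{cor-reduction} with Lemma~\ref{l-aclH}, Lemma~\ref{lemma-aclFH} and Theorem~\ref{thm-free}; your additional unpacking of conditions $(i)$--$(iv)$ via Theorem~\ref{t-indep} and Lemma~\ref{lemma-freefactor} just makes explicit what is already packaged inside Corollary~\ref{cor-reduction}. The only cosmetic caveat is that when $\G$ is elementarily equivalent to a free group one should fall back on Theorem~\ref{thm-free} directly (since Corollary~\ref{cor-reduction} assumes otherwise), a case the paper also leaves implicit.
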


\smallskip
 $(i)$  ${u_n}  \NotInd{}{} {u_0}$;

 $(ii)$    $u_0\ldots u_{i-1} \Ind_{u_i}u_{i+1}$ for $1 \leq i <n$;

$(iii)$ $\acleq(u_0) \cap \acleq(u_1)=\acleq(\emptyset)$. 

$(iv)$ $\acleq(u_0, u_1, \dots, u_{i-1}, u_{i}) \cap \acleq(u_0, u_1, \dots, u_{i-1}, u_{i+1})=\acleq(u_0, u_1, \dots, u_{i-1})$.

\bigskip
\noindent
{\bf Acknowledgement:} The authors would like to thank Zlil Sela, Anand Pillay and Gilbert Levitt for helpful discussions and Rizos Sklinos for having indicated to  us an omission in the first version of the paper.

\end{document}